\numberwithin{equation}{section}
\newtheorem{theorem}{Theorem}[section]
\newtheorem{proposition}[theorem]{Proposition}
\newtheorem{lemma}[theorem]{Lemma}
\newtheorem{corollary}[theorem]{Corollary}
\theoremstyle{definition}
\newtheorem{remark}{Remark}[section]
\def\XXint#1#2#3{{\setbox0=\hbox{$#1{#2#3}{\int}$}
		\vcenter{\hbox{$#2#3$}}\kern-.5\wd0}}
\def\B{\mathbb{R}^n}
\def\R{\mathbb{R}_+^{n+1}}
\def\ou{\overline{u}}
\def\l{\lambda}
\def\s{(-\Delta)^s}
\def\ss{(-\Delta)^{\frac{s}{2}}}
\def\S{\mathbb{S}^{n-1}}
\def\e{\varepsilon}
\def\D{\Delta}
\def\vp{\varphi}
\def\RR{\mathbb{R}}
\def\ve{\varepsilon}
\begin{document}
	\title[Fractional Gelfand-Liouville]{Classification of stable solutions to a non-local Gelfand-Liouville equation}

	\author[A. Hyder]{Ali Hyder}
	\address{\noindent Ali Hyder, Department of Mathematics, Johns Hopkins University, Krieger Hall, Baltimore, MD, 21218}
	\email{ahyder4@jhu.edu}
	
	\author[W. Yang]{ Wen Yang}
	\address{\noindent Wen ~Yang,~Wuhan Institute of Physics and Mathematics, Chinese Academy of Sciences, P.O. Box 71010, Wuhan 430071, P. R. China}
	\email{wyang@wipm.ac.cn}
	
	
	\thanks{2010 \textit{Mathematics Subject classification:} 35B65, 35J60, 35J61}
	\thanks{The first author is supported by the SNSF   Grant No. P400P2-183866.}
	\thanks{The second author is partially supported by NSFC No.11801550 and NSFC No.11871470.}

	\begin{abstract}
		We  study  finite Morse index  solutions to  the non-local  Gelfand-Liouville  problem
		\begin{equation*}
		(-\Delta)^su=e^u\quad\mathrm{in}\quad \B,
		\end{equation*} for every $s\in(0,1)$ and $n>2s$. Precisely, we  prove  non-existence of finite Morse index solutions whenever   the singular solution $$u_{n,s}(x)=-2s\log|x|+\log \left(2^{2s}\frac{\Gamma(\frac{n}{2})\Gamma(1+s)}{\Gamma(\frac{n-2s}{2})}\right)$$is unstable.  
	\end{abstract}
	\maketitle
	{\bf Keywords}: Gelfand equation, stable solution, monotonicity formula.

	\section{Introduction}
	This paper is devoted to the study of   the following non-local Gelfand-Liouville equation
	\begin{equation}
	\label{fg-1}
	(-\Delta)^su=e^u\quad\mathrm{in}\quad \B.
	\end{equation}
	For  $s\in(0,1)$ the non-local operator $(-\D)^s $ is defined by  
	\begin{equation}
	\label{1.deff}
	\s u=c_{n,s}~\mathrm{P.V.}\int_{\B}\frac{u(x)-u(y)}{|x-y|^{n+2s}}dy,
	\end{equation}
	with $c_{n,s}$ being the normalizing constant 
	$$c_{n,s}=\frac{2^{2s}}{\pi^{n/2}}\frac{\Gamma(\frac{n+2s}{2})}{|\Gamma(-s)|}.
	$$
	To give a meaning of the equation \eqref{fg-1} we shall assume that $u\in L_{s}(\B)$ and  $e^u\in L^1_{\mathrm{loc}}(\B)$, where $L_{\mu}(\B)$ (for $\mu\geq-\frac{n}{2}$) is defined by
	$$L_\mu(\B):=\left\{u\in L^1_{\mathrm{loc}}(\B):\int_{\B}\frac{|u(x)|}{1+|x|^{n+2\mu}}dx<\infty  \right\}.$$
	Then \eqref{fg-1} is to be understood in the following sense:
	\begin{align}
	\label{weak-fg}
	\int_{\B}u(-\D)^s\vp dx=\int_{\B} e^u\vp dx\quad \mbox{for every }~\vp\in C_c^\infty(\B).
	\end{align}
	\smallskip

We recall that a solution $u$ to \eqref{fg-1} is said to be  stable in an open set  $\Omega\subseteq\B$ if
	\begin{equation}
	\label{1.stablecondition}
	\frac{c_{n,s}}{2}\int_{\B}\int_{\B}\frac{(\vp(x)-\vp(y))^2}{|x-y|^{n+2s}}dxdy  \geq\int_{\B}e^u\vp^2dx \quad \text{for every }\vp\in C_c^\infty(\Omega).
	\end{equation}
	While a solution is said to be a  finite Morse index solution of \eqref{fg-1}   if  it is stable  outside a compact set in $\B$. 
	
	In the particular case  $s=1$ and $n=2$,  equation \eqref{fg-1} is the well-known Liouville equation \cite{Liouville}, whose solutions can be represented in terms of locally injective meromorphic functions. Under the finite volume condition, that is $\int_{\RR^2}e^udx<\infty$, Chen-Li in their celebrated paper \cite{cl} classified all  solutions to \eqref{fg-1}  showing that, up to a translation,  they are radially symmetric   in $\RR^2$ (for the case $2s=n=1$ see \cite{DMR}).   It is known that these solutions are    finite Morse index solutions of \eqref{fg-1} in $\B$. Later on, Farina in \cite{f2} and Dancer-Farina in \cite{df} established  non-existence of stable solutions to \eqref{fg-1} for $2\leq n\leq 9$ and non-existence of finite Morse index solutions to \eqref{fg-1} for $3\leq n\leq 9.$ For the related cosmic string equation, Lane-Emden equations and systems, we refer the readers to \cite{ay,c1,c2,c3,dggw,ddw,ddww,f,fw,fw1,hx,wy,wy2,w1,w2} and references therein for the classification results of stable solutions and finite Morse index solutions.
	
In a recent work  Duong-Nguyen  \cite{dn} proved that equation \eqref{fg-1} has no regular stable solution for  $n<10s$. Their approach  is based on the Moser's iteration,  following the same spirit of \cite{f,f2,wy}.  However, these arguments does not work for $s\in (0,1)$ if either $n>10s$, or $u$ is stable outside a compact set. 

It  is known  (see e.g. \cite[Proposition 3.2]{r1}) that the function
\begin{equation}
\label{1.singular}
u_{n,s}(x):=-2s\log|x|+\log \lambda_{n,s},\quad \lambda_{n,s}:=2^{2s}\dfrac{\Gamma(\frac n2)\Gamma(1+s)}{\Gamma(\frac{n-2s}{2})}
\end{equation}
is a singular solution to \eqref{fg-1}. It is interesting to note that the function $e^{u_{n,s}}$ is precisely the Hardy weight for the operator $(-\D)^s$. More precisely, the following   Hardy inequality holds (see \cite[Theorem 2.9]{y} and \cite{h}):  
	\begin{equation*}
	\frac{c_{n,s}}{2}\int_{\B}\int_{\B}\dfrac{(\psi(x)-\psi(y))^2}{|x-y|^{n+2s}}\geq\Lambda_{n,s}\int_{\B}|x|^{-2s}\psi^2dx \quad \text{for every }\psi\in C_c^{\infty}(\B),
	\end{equation*}
	where the optimal constant $\Lambda_{n,s}$ is given by
	\begin{equation}
	\label{1.hconst}
	\Lambda_{n,s}=2^{2s}\dfrac{\Gamma^2(\frac{n+2s}{4})}{\Gamma^2(\frac{n-2s}{4})}.
	\end{equation}
This shows that the singular solution  $u_{n,s}$ is stable if and only if
	\begin{equation}
	\label{1.stable}
	\dfrac{\Gamma(\frac n2)\Gamma(1+s)}{\Gamma(\frac{n-2s}{2})}\leq \dfrac{\Gamma^2(\frac{n+2s}{4})}{\Gamma^2(\frac{n-2s}{4})}.
	\end{equation}
	As a consequence of \eqref{1.stable}, we get that (see \cite[Proposition 3.2]{r1} and \cite[Theorem 1.1]{r2})
	\begin{enumerate}
		\item [(1).] If $n\leq 7$, then $u_{n,s}$ is unstable for all $s\in(0,1).$
		\item [(2).] If $n=8$, then $u_{n,s}$ is stable if and only if $s\leq0.28206....$
		\item [(3).] If $n=9$, then $u_{n,s}$ is stable if and only if $s\leq 0.63237....$
		\item [(4).]If $n\geq10$, then $u_{n,s}$ is stable for all $s\in(0,1).$
	\end{enumerate}
    The stability condition \eqref{1.stable} for the solution $u_{n,s}$ suggests that equation \eqref{fg-1} might not admit any stable solution if the following inequality holds: 
	\begin{equation}
	\label{1.stable1}
	\dfrac{\Gamma(\frac n2)\Gamma(1+s)}{\Gamma(\frac{n-2s}{2})}> \dfrac{\Gamma^2(\frac{n+2s}{4})}{\Gamma^2(\frac{n-2s}{4})}.
	\end{equation}
	It is worth pointing out that the condition   $n<10s$ in \cite{dn}  implies \eqref{1.stable1}.  Interestingly, in this range, stable solutions are smooth.  
	
	\begin{theorem}
		\label{th1.2}
		Let $\Omega$ be an open set in $\B$ and $s\in (0,1)$. If $u \in L_s(\B)\cap\dot  H_{\mathrm{loc}}^s(\Omega)$ is stable in $\Omega$ and $n<10s$, then $e^u\in L^p_{\mathrm{loc}}(\Omega)$ for every $p\in[ 1,5)$. In particular,  $u$ is smooth in $\Omega$.  
	\end{theorem}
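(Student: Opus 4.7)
The plan is to perform a Moser-type iteration in the spirit of Farina~\cite{f2}, adapted to the fractional Laplacian, and then to close the argument by Calderón-Zygmund regularity. The threshold exponent $5$ arises from a Farina-type trial function of exponential shape in the stability inequality, and the condition $n<10s$, equivalently $n/(2s)<5$, is precisely what is needed to turn local $L^p$-control of $e^u$ into an $L^\infty$-bound on $u$.

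Given $\eta\in C_c^\infty(\Omega)$ and $t\in(0,4)$, my starting point is the trial function $\varphi=e^{tu/2}\eta$ in the stability inequality \eqref{1.stablecondition}; since it is not directly admissible, I would first replace $e^{tu/2}$ by its truncation $\min(e^{tu/2},M)$ and let $M\to\infty$ at the end. This yields
\begin{equation*}
\int_{\B} e^{(t+1)u}\eta^2\,dx \leq \frac{c_{n,s}}{2}\iint\frac{(e^{tu(x)/2}\eta(x)-e^{tu(y)/2}\eta(y))^2}{|x-y|^{n+2s}}\,dxdy.
\end{equation*}
The elementary inequality $(e^{ta/2}-e^{tb/2})^2\leq \tfrac{t}{4}(a-b)(e^{ta}-e^{tb})$ (Cauchy-Schwarz on $\int_b^a\tfrac{t}{2}e^{tx/2}\,dx$) together with the symmetric Young-type splitting $(\alpha p-\beta q)^2\leq(1+\varepsilon)\tfrac{p^2+q^2}{2}(\alpha-\beta)^2+C_\varepsilon\tfrac{\alpha^2+\beta^2}{2}(p-q)^2$, applied with $\alpha=e^{tu(x)/2}$, $\beta=e^{tu(y)/2}$, $p=\eta(x)$, $q=\eta(y)$, reduces the right-hand side (after symmetrization and the decomposition $\eta(x)^2(e^{tu(x)}-e^{tu(y)})=(e^{tu(x)}\eta^2(x)-e^{tu(y)}\eta^2(y))+e^{tu(y)}(\eta^2(y)-\eta^2(x))$) to
\begin{equation*}
(1+\varepsilon)\tfrac{t}{4}\cdot\tfrac{c_{n,s}}{2}\iint\frac{(u(x)-u(y))(e^{tu(x)}\eta^2(x)-e^{tu(y)}\eta^2(y))}{|x-y|^{n+2s}}\,dxdy+C_\varepsilon\mathcal{E},
\end{equation*}
where $\mathcal{E}$ collects the cutoff/commutator remainders. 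Testing the weak form of the equation \eqref{weak-fg} against (a truncation of) $e^{tu}\eta^2$ identifies the displayed double integral as $\tfrac{2}{c_{n,s}}\int e^{(t+1)u}\eta^2\,dx$, leading to
\begin{equation*}
\Bigl(1-(1+\varepsilon)\tfrac{t}{4}\Bigr)\int e^{(t+1)u}\eta^2\,dx\leq C_\varepsilon\mathcal{E}.
\end{equation*}
For $t\in(0,4)$ and $\varepsilon$ small, the left-hand coefficient is strictly positive, and once $\mathcal{E}$ is controlled in terms of $\|e^u\|_{L^q_{\mathrm{loc}}}$ for some $q\leq t$ on a slightly enlarged cutoff, a straightforward induction stepping $t$ up by $1$ starting from the a priori bound $e^u\in L^1_{\mathrm{loc}}$ yields $e^u\in L^p_{\mathrm{loc}}(\Omega)$ for every $p\in[1,5)$. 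The smoothness assertion then follows by picking $p\in(n/(2s),5)$ (possible since $n<10s$) and applying standard Calderón-Zygmund / fractional Schauder theory to $(-\Delta)^su=e^u\in L^p_{\mathrm{loc}}$, then bootstrapping.

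I expect the main technical obstacle to be the careful nonlocal bookkeeping: the test functions $e^{tu/2}\eta$ and $e^{tu}\eta^2$ are not a priori admissible and must be approximated by bounded truncations, with the limit passages relying on monotone/dominated convergence and on the hypothesis $u\in L_s(\B)$ to control the far-field tails of the double integrals. Equally delicate is the estimation of the commutator error $\mathcal{E}$, which consists of nonlocal integrals of $e^{tu(x)}(\eta(x)-\eta(y))^2$ and $(u(x)-u(y))\,e^{tu(y)}(\eta^2(y)-\eta^2(x))$ against the kernel $|x-y|^{-n-2s}$: these must be split into near- and far-diagonal parts, with the near-diagonal part absorbed by the smoothness of $\eta$ and the $\dot H^s$-norm of $u$, and the far-diagonal part controlled by the inductive hypothesis on $\|e^u\|_{L^q_{\mathrm{loc}}}$, all with constants uniform in the iteration so that the critical range $p<5$ is reached after finitely many steps.
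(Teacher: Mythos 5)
Your overall strategy is correct and produces the right threshold, but it takes a genuinely different technical route from the paper. You run a Farina-type iteration directly on $\mathbb{R}^n$ with the Gagliardo seminorm (this is essentially the Duong--Nguyen route cited in the introduction), whereas the paper transfers the whole argument to the Caffarelli--Silvestre extension $\ou$ on $\R$ and works with the weighted Dirichlet energy $\int t^{1-2s}|\nabla\ou|^2$. Your elementary inequality $(e^{ta/2}-e^{tb/2})^2\le\frac{t}{4}(a-b)(e^{ta}-e^{tb})$ is the nonlocal counterpart of the pointwise identity $|\nabla e^{tu/2}|^2=\frac{t}{4}\nabla u\cdot\nabla e^{tu}$ that the paper uses upstairs, and your condition $t<4$ is exactly the paper's $\alpha<2$ (with $t=2\alpha$), so both arrive at $e^{pu}\in L^p_{\mathrm{loc}}$ for $p<5$. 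The advantage of the paper's route is that every error term lives on a compact subset of $\overline\R$ and only involves $\nabla\Phi$ (which is zero near $t=0$), so the near-field/far-field bookkeeping you flag as the ``main technical obstacle'' largely evaporates; the one new ingredient needed is Lemma~\ref{lem-2.7}, which converts $e^{\alpha u}\in L^1(\Omega)$ into $t^{1-2s}e^{\alpha\ou}\in L^1_{\mathrm{loc}}$ via Jensen and the Poisson kernel being a probability density. Your route is more elementary but shoulders more nonlocal bookkeeping.

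Two concrete gaps in the proposal. First, your Young-type splitting $(\alpha p-\beta q)^2\le(1+\ve)\frac{p^2+q^2}{2}(\alpha-\beta)^2+C_\ve\frac{\alpha^2+\beta^2}{2}(p-q)^2$, applied before truncation, produces the error $\iint e^{tu(x)}\frac{(\eta(x)-\eta(y))^2}{|x-y|^{n+2s}}\,dxdy$, whose contribution from $x$ far outside $\mathrm{supp}\,\eta$ is of the order $\int_{|x|\gg 1}\frac{e^{tu(x)}}{|x|^{n+2s}}dx$. This is not controlled by the hypothesis $u\in L_s(\B)$; you must truncate \emph{before} splitting and then check that the truncation error vanishes. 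Second, and more seriously, the passage to the limit in the truncation is the hardest point of the whole argument and is not addressed: after truncating you obtain a quadratic term in $\nabla u_k$ (the ``good'' term to be absorbed) and a cross term of the type $\iint (u(x)-u(y))\,e^{tu_k(y)}(\eta^2(y)-\eta^2(x))\,|x-y|^{-n-2s}$, and one must show that, along a subsequence, this cross term is $o(1)$ of the quadratic term as $k\to\infty$. The paper devotes a non-trivial measure-theoretic decay lemma (Lemma~\ref{lem-decay}) precisely to this purpose, iterating an estimate on $\mu(\{\ou\ge k\})$ to gain enough exponential decay. You would need to supply an analogue of this argument; ``let $M\to\infty$ at the end'' is not enough. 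Until these two points are filled in, the proposal is an outline rather than a proof, but the skeleton, the choice of test functions, and the threshold $p<5$ are all correct, and the final bootstrap (pick $p\in(n/(2s),5)$, possible since $n<10s$, then Riesz potential plus Schauder) matches the paper's.
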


	Here the function space $\dot H^s(\Omega)$ is defined by $$\dot H^s(\Omega):=\left\{ u\in L^2_{\mathrm{loc}}(\Omega): \int_{\Omega}\int_{\Omega}\frac{|u(x)-u(y)|^2}{|x-y|^{n+2s}}dxdy<\infty\right\}.$$ The notation $\dot H^s_{\mathrm{loc}}(\B)$ will be used to denote the set of  all functions which are in $\dot H^s(\Omega)$ for every bounded open set $\Omega\subset\B$.  
	
	\smallskip
	
	The proof of Theorem \ref{th1.2} is based on the Farina type estimates for stable solutions. To this end we use the Caffarelli-Silvestre  \cite{cs}  extension  $\ou$ of $u$ on the upper-half space $\R$: 
	\begin{equation}
	\label{Poisson-repre}
	\overline{u}(X)=\int_{\B}P(X,y)u(y)dy,\quad X=(x,t)\in \B\times (0,\infty),
	\end{equation}
	where
	\begin{equation*}
	P(X,y)=d_{n,s}\dfrac{t^{2s}}{|(x-y,t)|^{n+2s}},
	\end{equation*}
	and $d_{n,s}>0$ is a normalizing constant so that   $\int_{\B}P(X,y)dy=1$. Notice that $\ou$ is well-defined as $u\in L_s(\B)$.  Moreover, $t^\frac{1-2s}{2}\nabla \ou\in L^{2}_{\mathrm{loc}}(\Omega\times[0,\infty))$ whenever $u\in \dot H^s(\Omega)$.  The equation  \eqref{weak-fg} in terms of $\ou$ now reads 
	\begin{equation}
	\label{1.weak}
	\int_{\R}t^{1-2s}\nabla\ou\cdot\nabla\Phi(x,t)dxdt=\kappa_s\int_{\B}e^u\vp  dx\quad\mbox{for every}~\Phi\in C_c^\infty(\R),
	\end{equation}
	where $\vp (x)=\Phi(x,0)$ and $\kappa_s=\frac{\Gamma(1-s)}{2^{2s-1}\Gamma(s)}.$
	
\smallskip
	
	
	Concerning the non-existence of finite Morse index solution we prove: 
	
	\begin{theorem} \label{th1.1} 	
	Assume that $n>2s$ and $s\in (0,1)$. If  \eqref{1.stable1} holds then \eqref{fg-1} does not admit a finite Morse index solution $u\in L_s(\B)\cap W^{1,2}_{\mathrm{loc}}(\B)$ satisfying  $e^u\in L_{\mathrm{loc}}^2(\B)$.
	\end{theorem}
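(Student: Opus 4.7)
We argue by contradiction: suppose $u$ is a finite Morse index solution with the stated regularity, so $u$ is stable on $\B\setminus\overline{B_{R_0}}$ for some $R_0>0$. The scheme is a blow-down analysis in the Caffarelli--Silvestre extension. The equation \eqref{fg-1} is invariant under $u\mapsto u^\lambda$, where $u^\lambda(x):=u(\lambda x)+2s\log\lambda$ with extension $\ou^\lambda(X):=\ou(\lambda X)+2s\log\lambda$; each $u^\lambda$ is stable on every fixed compact subset of $\B\setminus\{0\}$ once $\lambda$ is large. The standing hypotheses transfer to $u^\lambda$ and give $t^{1-2s}|\nabla\ou^\lambda|^2\in L^1_{\mathrm{loc}}(\OR\setminus\{0\})$ and $e^{u^\lambda}\in L^2_{\mathrm{loc}}(\B)$.

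\smallskip

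\noindent\textit{Monotonicity formula.} Following the Fleming--Giga--Kohn template adapted to the degenerate extension (in the spirit of the monotonicity formulas used for fractional Lane--Emden), we define the scale-invariant energy
$$E(\lambda;\ou):=\lambda^{2s-n}\left[\int_{B_\lambda^+}\frac{t^{1-2s}}{2}|\nabla\ou|^2\,dX-\kappa_s\!\int_{B_\lambda\cap\B}e^u\,dx\right]+\text{(boundary corrections on }\partial B_\lambda^+),$$
where the corrections are chosen so that a weighted Pohozaev identity yields
$$\frac{d}{d\lambda}E(\lambda;\ou)\geq c\int_{\partial^+\!B_\lambda^+}t^{1-2s}\left(\partial_r\ou+\frac{2s}{r}\right)^{\!2}d\sigma\geq 0.$$
Here $e^u\in L^2_{\mathrm{loc}}(\B)$ and $u\in W^{1,2}_{\mathrm{loc}}(\B)$ are precisely the regularity needed to justify the integration by parts and the weighted trace at $\{t=0\}$.

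\smallskip

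\noindent\textit{Blow-down and rigidity.} Combining a Pohozaev identity on $B_R^+$ with the stability inequality \eqref{1.stablecondition} applied to cutoffs supported in $B_{2R}\setminus B_R$ for $R\geq R_0$, one obtains $\sup_{\lambda\geq 2R_0}|E(\lambda;\ou)|<\infty$; by monotonicity, $E(\lambda;\ou)\to E_\infty\in\RR$. Uniform local bounds on $t^{1-2s}|\nabla\ou^\lambda|^2$ and on $e^{u^\lambda}$ extracted from $E$ then yield a subsequential limit $\ou^\lambda\to \ou^\infty$; applying Fatou to \eqref{1.stablecondition} and passing to the limit in the weak form \eqref{1.weak} shows that $u^\infty$ is a stable weak solution of \eqref{fg-1} on $\B\setminus\{0\}$. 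Since $E(\lambda;\ou^\infty)\equiv E_\infty$ by scale invariance, the monotonicity integrand must vanish identically, forcing $\partial_r\ou^\infty+2s/r\equiv 0$ and hence $u^\infty(x)=-2s\log|x|+\psi(x/|x|)$. Combining stability of $u^\infty$ with the sharp Hardy inequality of constant $\Lambda_{n,s}$ and the equation itself pins $u^\infty\equiv u_{n,s}$; but $u_{n,s}$ is unstable under \eqref{1.stable1}, contradiction.

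\smallskip

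\noindent\textit{Main obstacle.} The technical heart of the argument is Step~2: producing the monotonicity formula with the correct boundary corrections and extracting a non-negative sign for $\tfrac{d}{d\lambda}E$ within the degenerate weighted Sobolev framework. This is where the hypothesis $e^u\in L^2_{\mathrm{loc}}(\B)$ is essentially used. A second delicate point is the homogeneous-rigidity step at the end, which must classify stable homogeneous solutions of the form $-2s\log|x|+\psi(x/|x|)$; the Hardy sharpness forces $e^{\psi}\leq\Lambda_{n,s}$ pointwise, and combined with the equation this pins $\psi$ down to the constant $\log\lambda_{n,s}$, identifying $u^\infty$ with $u_{n,s}$.
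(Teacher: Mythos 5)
Your proposal follows the same broad outline as the paper (rescale to $u^\lambda$, establish a monotonicity formula in the Caffarelli--Silvestre extension, blow down, and derive a contradiction from a classification of stable homogeneous solutions), but there are two genuine gaps at exactly the points the paper flags as the technical crux. First, you assert that ``combining a Pohozaev identity on $B_R^+$ with the stability inequality one obtains $\sup_\lambda|E(\lambda;\ou)|<\infty$.'' For exponential nonlinearity this is precisely what fails to come for free: the weighted gradient term $\int t^{1-2s}|\nabla\ou^\lambda|^2$ and the boundary integral of $\ou^\lambda$ do not scale like the energy bound coming directly from stability, which is why the paper first derives Farina-type $L^p$ estimates on $e^u$ (Proposition~\ref{prop-2.6}), then an integral representation formula $u^\lambda=v^\lambda+c_\lambda$ (Lemma~\ref{u-representation}), uses these to bound $\int_{B_r}|(-\Delta)^{s/2}u^\lambda|^2$ (Lemma~\ref{lef.3}) and to show the boundary term equals $c_s c_\lambda+O(1)$ (Lemma~\ref{lef.2}), and only then proves $c_\lambda=O(1)$ (Proposition~\ref{prf.1}) by playing the monotonicity against the representation formula. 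None of this is implied by a Pohozaev identity plus stability on an annulus; without it the blow-down limit need not exist.

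Second, the rigidity step ``Hardy sharpness forces $e^{\psi}\leq\Lambda_{n,s}$ pointwise, and combined with the equation this pins $\psi$ down to the constant $\log\lambda_{n,s}$'' is not justified. The stability inequality, together with the sharp Hardy inequality, gives only an integral comparison, not a pointwise one, and one does not need (nor can one easily prove) that $\psi$ is constant. What the paper actually does (Theorem~\ref{th3.1}) is: test the equation for a general homogeneous solution $-2s\log r+\tau(\theta)$ against radial $\vp\in C_c^\infty(\B)$ to obtain the exact identity $\int_{\mathbb{S}^{n-1}}e^{\tau}\,d\theta=A_{n,s}|\mathbb{S}^{n-1}|$, and separately plug the nearly-optimal Hardy test function $r^{-\frac{n-2s}{2}}\eta_\varepsilon(r)$ into the stability inequality and extract the coefficient of $\log\frac1\varepsilon$ to get $\Lambda_{n,s}|\mathbb{S}^{n-1}|\geq\int_{\mathbb{S}^{n-1}}e^{\tau}\,d\theta$; the two together give $\Lambda_{n,s}\geq A_{n,s}$, contradicting \eqref{1.stable1}. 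This is an average (not pointwise) argument, and it rules out all stable homogeneous solutions, constant-angular-part or not. Note also that to derive the first identity you need $u^\infty$ to satisfy the equation distributionally across $\{0\}$, which requires the uniform integrability of $e^{u^{\lambda_i}}$ up to the origin (paper's \eqref{est-p-3}); a version of the blow-down that only gives a solution on $\B\setminus\{0\}$, as you state, is insufficient.
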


The hypothesis $e^u\in L^2_{\mathrm{loc}}(\B)$ and the regularity assumption  $u\in W^{1,2}_{\mathrm{loc}}(\B)$ in  the above theorem can be weakened by simply assuming $u\in W^{1,q}_{\mathrm{loc}}(\B) $ with $q$ slightly bigger than $\frac54$, see Remark \ref{re1.1}.  These assumptions will be used only to derive the following monotonicity formula, which is a crucial tool in proving Theorem \ref{th1.1}.  

   \begin{theorem}
	\label{th4.2}
Let $u\in L_{s}(\B)\cap W^{1,2}_{\mathrm{loc}}(\B)$ be a solution to  \eqref{fg-1}. Assume that    $e^u\in L^2_{\mathrm{loc}}(\B)$. For $x_0\in\partial\R$ and $\l>0$, we define
	\begin{equation}
	\label{1.monotonicity}
		\begin{aligned}
		E(\ou,x_0,\l):=~&\lambda^{2s-n}\left(\frac12\int_{B^{n+1}(x_0,\l)\cap \R}t^{1-2s}|\nabla\ou|^2dxdt-\kappa_s\int_{B(x_0,\l)}e^{\ou}dx\right)\\
		&+2s\l^{2s-n-1}\int_{\partial B^{n+1}(x_0,\l)\cap\R}t^{1-2s}(\ou+2s\log r)d\sigma.
		\end{aligned}
		\end{equation}
		Then $E$ is a nondecreasing function of $\l$. Furthermore,
		\begin{equation*}
		\frac{dE}{d\l}=\l^{2s-n}\int_{\partial B^{n+1}(x_0,\l)\cap\R}t^{1-2s}\left(\frac{\partial\ou}{\partial r}+\frac{2s}{r}\right)^2d\sigma,
		\end{equation*}
		where $B^{n+1}(x_0,\l)$ denotes the Euclidean ball in $\mathbb{R}^{n+1}$ centered at $x_0$ of radius $\l$, $\sigma$ is the $n$-dimensional surface measure on $\partial B^{n+1}(x_0,\l),$ $X=(x,t)\in\R$, $r=|(x-x_0,t)|$ and $\partial_r=\nabla\cdot\frac{X-(x_0,0)}{r}$ is the corresponding radial derivative.
	\end{theorem}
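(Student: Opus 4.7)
The strategy is the classical scale-reduction technique for such weighted monotonicity formulas: exploit the scale invariance of \eqref{fg-1} to rewrite $E(\ou,x_0,\lambda)$ on a fixed (unit) half-ball, then differentiate in $\lambda$ and simplify via the Caffarelli--Silvestre extension equation.

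First I would set up the rescaling. The transformation $u(x)\mapsto u(x_0+\lambda x)+2s\log\lambda$ preserves \eqref{fg-1}, and the corresponding rescaled extension is
$$\ou^\lambda(Y):=\ou(x_0+\lambda Y)+2s\log\lambda,\qquad Y=(y,\tau)\in\OR.$$
Then $\ou^\lambda$ satisfies $\operatorname{div}(\tau^{1-2s}\nabla\ou^\lambda)=0$ in $\R$ with the conormal condition $-\lim_{\tau\to 0^+}\tau^{1-2s}\partial_\tau\ou^\lambda=\kappa_s e^{u^\lambda}$, where $u^\lambda(y)=u(x_0+\lambda y)+2s\log\lambda$. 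Changing variables $X=x_0+\lambda Y$ in \eqref{1.monotonicity}, noting that $r=\lambda$ on $\partial B^{n+1}(x_0,\lambda)$ so that $\ou+2s\log r=\ou^\lambda$ on that sphere, and tracking the weight/volume/surface Jacobians, the prefactors $\lambda^{2s-n}$ and $\lambda^{2s-n-1}$ are tuned so that
\begin{align*}
E(\ou,x_0,\lambda)=~&\frac12\int_{B^{n+1}(0,1)\cap\R}\tau^{1-2s}|\nabla\ou^\lambda|^2\,dY-\kappa_s\int_{B(0,1)}e^{u^\lambda}\,dy\\
&+2s\int_{\partial B^{n+1}(0,1)\cap\R}\tau^{1-2s}\ou^\lambda\,d\sigma.
\end{align*}

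Since the domains of integration are now $\lambda$-independent, differentiation in $\lambda$ reduces to differentiating $\ou^\lambda$. Writing $w^\lambda:=\partial_\lambda\ou^\lambda$, one computes $w^\lambda=\lambda^{-1}(Y\cdot\nabla_Y\ou^\lambda+2s)$, which on the unit upper hemisphere is $\lambda^{-1}(\partial_{\tilde r}\ou^\lambda+2s)$ with $\tilde r=|Y|$. The key step is a weighted integration by parts applied to $\int_{B^{n+1}(0,1)\cap\R}\tau^{1-2s}\nabla\ou^\lambda\cdot\nabla w^\lambda\,dY$: the bulk divergence vanishes by the extension equation, the flat boundary term at $\tau=0$ produces exactly $\kappa_s\int_{B(0,1)}e^{u^\lambda}w^\lambda(y,0)\,dy$ via the conormal condition (and so cancels the middle term in $dE/d\lambda$), and the spherical boundary term combines with the last term to complete the square:
$$\frac{dE}{d\lambda}=\frac{1}{\lambda}\int_{\partial B^{n+1}(0,1)\cap\R}\tau^{1-2s}\bigl(\partial_{\tilde r}\ou^\lambda+2s\bigr)^2\,d\sigma\geq 0.$$
Undoing the scaling via $\partial_{\tilde r}\ou^\lambda=\lambda\,\partial_r\ou$ on $|Y|=1$ and $\tau^{1-2s}d\sigma_Y=\lambda^{2s-1-n}t^{1-2s}d\sigma_X$ then yields the stated identity involving $(\partial_r\ou+2s/r)^2$.

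The main technical obstacle will be justifying the integration by parts rigorously --- in particular, the convergence of the flat boundary integral as $\tau\to 0^+$ and the admissibility of $w^\lambda$ (which involves first derivatives of $\ou$) as a test function. This is precisely where the standing hypotheses $u\in W^{1,2}_{\mathrm{loc}}(\B)$ and $e^u\in L^2_{\mathrm{loc}}(\B)$ enter: combined with standard regularity estimates for the Caffarelli--Silvestre extension, they ensure sufficient integrability of $\ou^\lambda$ and $\nabla\ou^\lambda$ up to $\{t=0\}$ and make the pairing $e^{u^\lambda}w^\lambda$ lie in $L^1$, so the formal computation above can be made rigorous by a cutoff/approximation argument near $\tau=0$ together with the weak formulation \eqref{1.weak}.
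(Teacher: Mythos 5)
Your proposal is correct and follows essentially the same route as the paper: rescale to the unit half-ball via $\ou^\lambda$, differentiate in $\lambda$, integrate by parts using the extension equation and the conormal boundary condition to cancel the bulk and flat-boundary contributions, and combine the resulting spherical boundary term with the third term of $E$ to form the perfect square $(\partial_r\ou+2s/r)^2$. The paper organizes the bookkeeping slightly differently (splitting $E$ into a piece $E_1$ and the spherical boundary integral before differentiating), and it defers the regularity justification you flag to a mollification argument in Remark \ref{re1.1}, but the ideas are identical.
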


	\medskip
	
	

	The study of the Lane emden equation by using the  monotonicity formula method goes back to a series of works \cite{ddw,ddww,w0}. Very recently, Wang \cite{w3} applied  such method to study the stable solutions of Toda systems. Compared with the polynomial nonlinearity, the control on the integral of the weighted square term and the boundary integral of the linear term (the first and third term in the motonicity formula \eqref{1.monotonicity}) turns to be more difficult for the exponential nonliearity. Due to this difficulty, Wang  \cite{w3} used the $\epsilon$-regularity theory to exclude the case that the boundary term going to infinity in the procedure of performing the blowing down analysis. In this paper, instead of using the $\epsilon$-regularity theory, we shall use a more straightforward way to estimate  each term in \eqref{1.monotonicity}.  {We believe that this part of the analysis can be adapted for studying some other problems with exponential nonlinearity.}

	Before ending the introduction let us briefly mention our strategy  for the proof of Theorem \ref{th1.1}.  We consider a family of rescaled solutions $$u^\l(x)=u(\lambda x)+2s\log\lambda,\quad \lambda\geq1.$$  Together with the above  monotonicity formula, Farina type estimates (Proposition \ref{prop-2.6}) and the integral representation formula (Lemma \ref{u-representation}) we  prove the convergence $u^\l\to u^\infty$ as $\lambda\to\infty$. Then the limit function $u^\infty$ is a stable solution of \eqref{fg-1}. Again,     the monotonicity formula is crucial to show that  $u^\infty$   is   homogeneous in $\B$.   The proof then follows from the non-existence result in Theorem \ref{th3.1}.

%

\medskip

	The current paper is organized as follows. First we obtain some decay estimates and integral presentation for the solutions in Section 2. In Section 3, we derive the higher order integrability of $e^u$ and prove Theorem \ref{th1.2}. {While in section 4, we first study the nonexistence of homogeneous stable soluion under \eqref{1.stable1} and the monotonicity formula, then} we consider the family of solutions arising from the blow-down analysis and use it to reduce the stable solutions to stable homogeneous solution, from which Theorem \ref{th1.1} is established.

	\bigskip
	\begin{center}
		Notations:
	\end{center}
	\begin{enumerate}
		\item [$B_R^{n+1}$] \quad the ball centered at $0$ with radius $R$ in dimension $(n+1)$.
		\smallskip
		\item [$B_R$] \quad the ball centered at $0$ with radius $R$ in dimension $n$.
		\smallskip
		\item [$B^{n+1}(x_0,R)$] \quad  the ball centered at $x_0$ with radius $R$ in dimension $(n+1)$.
		\smallskip
		\item [$B(x_0,R)$] \quad the ball centered at $x_0$ with radius $R$ in dimension $n$.
		\smallskip
		\item [$X=(x,t)$] \quad represent  points in $\mathbb{R}_+^{n+1}=\B\times[0,\infty).$
		\smallskip
		\item [$\ou$] \quad $s$-harmonic extension  of $u$ on $\mathbb{R}_+^{n+1}$.
		\item [$C$] \quad  a generic positive constant which may change from line to line.
		\item [$C(r)$] \quad a positive constant depending on $r$ and may change from line to line.
		\item [$\sigma$] \quad the $n$-dimensional Hausdorff measure restriced to $\partial B^{n+1}(x_0,r)$.
	\end{enumerate}

	\medskip
	\section{Preliminary estimates}
	In this section  we use the stability condition outside a compact set to derive  energy estimates on $e^u$  and the integral representation formula for $u$.
	
	\begin{lemma}
		\label{leh.1}
		Let $u$ be a solution to \eqref{fg-1} for some $n>2s$. Suppose that $u$ is stable outside a compact set. Then
		\begin{equation}
		\label{h.7}
		\int_{B_r}e^udx\leq Cr^{n-2s}\quad\text{for every } r\geq1.
		\end{equation}
	\end{lemma}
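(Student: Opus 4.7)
The plan is to apply the stability inequality \eqref{1.stablecondition} with a well-chosen radial cutoff function, in the spirit of Farina's classical argument for the local case. Pick $R_0>0$ large enough that $u$ is stable in $\Omega:=\B\setminus \overline{B_{R_0/2}}$. Any nonnegative $\varphi\in C_c^\infty(\Omega)$ then yields
\[
\int_{\B} e^u\varphi^2\,dx\;\le\;\frac{c_{n,s}}{2}\int_{\B}\int_{\B}\frac{(\varphi(x)-\varphi(y))^2}{|x-y|^{n+2s}}\,dxdy.
\]
The aim is to choose $\varphi$ that equals $1$ on $B_r\setminus B_{R_0}$ so that the left-hand side controls $\int_{B_r\setminus B_{R_0}}e^u$, and then show the right-hand side is at most $Cr^{n-2s}$.

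For the construction, fix $\eta\in C_c^\infty(B_2)$ with $\eta\equiv 1$ on $B_1$, and fix $\chi\in C_c^\infty(B_{R_0})$ with $\chi\equiv 1$ on $B_{R_0/2}$. For $r\ge 2R_0$ define
\[
\eta_r(x):=\eta(x/r)-\chi(x),
\]
which satisfies $0\le \eta_r\le 1$, vanishes on $B_{R_0/2}$ and outside $B_{2r}$, and equals $1$ on $B_r\setminus B_{R_0}$; in particular $\eta_r\in C_c^\infty(\Omega)$. The standard scaling identity
\[
\int_{\B}\int_{\B}\frac{(\eta(x/r)-\eta(y/r))^2}{|x-y|^{n+2s}}\,dxdy=r^{n-2s}\int_{\B}\int_{\B}\frac{(\eta(x)-\eta(y))^2}{|x-y|^{n+2s}}\,dxdy
\]
together with the triangle inequality in the Gagliardo seminorm gives
\[
[\eta_r]_{\dot H^s}^2\;\le\;2\,r^{n-2s}[\eta]_{\dot H^s}^2+2[\chi]_{\dot H^s}^2\;\le\;C\,r^{n-2s},
\]
where in the last step we used $r\ge 1$ and $n>2s$ to absorb the $r$-independent second term.

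Inserting $\varphi=\eta_r$ into the stability inequality yields $\int_{B_r\setminus B_{R_0}} e^u\,dx\le Cr^{n-2s}$. Since $e^u\in L^1_{\mathrm{loc}}(\B)$ we have $\int_{B_{R_0}} e^u\,dx\le C$, and combining these two estimates gives $\int_{B_r}e^u\,dx\le C r^{n-2s}$ for all $r\ge 2R_0$. For $1\le r\le 2R_0$ the claimed bound is immediate from local integrability of $e^u$ and the fact that $r^{n-2s}\ge 1$. The only mildly technical point is the scaled seminorm estimate for $\eta_r$, but this is routine once the test function is split into an outer cutoff at scale $r$ and an $r$-independent inner cutoff at scale $R_0$.
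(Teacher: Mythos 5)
Your proof is correct and follows essentially the same route as the paper: test the stability inequality against a cutoff that equals one on the annulus $B_r\setminus B_{R_0}$ and scale to get the $r^{n-2s}$ bound. The only difference is cosmetic — the paper writes the cutoff as a product $\eta_R(x)\varphi(x/r)$ while you write it as a difference $\eta(x/r)-\chi(x)$, but for $r\ge 2R_0$ these coincide (with $\chi=1-\eta_R$), and your version makes the triangle-inequality estimate of the Gagliardo seminorm slightly more transparent.
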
	
	
	\begin{proof} Let $R\gg1$ be such that $u$ is stable on $\B\setminus B_R$. We fix two  smooth cut-off functions $\eta_R$ and $\varphi$ in $\B$ such that
		\begin{align*}
		\eta_R(x)=\left\{\begin{array}{ll}
		0\quad&\text{for }|x|\leq R\\
		\\
		1\quad&\text{for }|x|\geq 2R
		\end{array}\right. ,\quad
		\varphi(x)=\left\{\begin{array}{ll}
		1\quad&\text{for }|x|\leq 1\\
		\\
		0\quad&\text{for }|x|\geq 2  \end{array}\right..
		\end{align*}
		Setting $\psi(x)=\eta_R(x)\varphi(\frac xr)$ with $r\geq 1$ we see that $\psi$ is a good test function for the stability condition \eqref{1.stablecondition}. Hence,
		\begin{equation*}
		\int_{B_r}e^udx\leq C+\int_{\B}|\ss\psi|^2dx\leq C+Cr^{n-2s}\leq Cr^{n-2s},
		\end{equation*}
		where we used that $n>2s$ and $r\geq1.$
	\end{proof}

	It is not difficult to see that if $u$ is a solution to \eqref{fg-1}, then
	$$u^\l(x)=u(\l x)+2s\log\l$$
	establishes a family of solutions to \eqref{fg-1}. In addition,  $u$ is stable on $\B\setminus B_R$ if and only if $u^\l$ is  stable  on $\B\setminus B_{\frac R\l}$.
	
	 As a simple consequence of Lemma \ref{leh.1} we obtain the following corollary:
	\begin{corollary}
		\label{cor2.2}
		Suppose $n>2s$ and $u$ is a solution of \eqref{fg-1} which is stable outside a compact set.  Then there exists $C>0$ such that
		\begin{align}
		\label{f.1}
		\int_{B_r}e^{u^\l}dx\leq Cr^{n-2s}\quad\text{for every }\lambda\geq 1,\,r\geq1.
		\end{align}
	\end{corollary}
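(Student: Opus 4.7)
The plan is to reduce the claim directly to Lemma \ref{leh.1} via a change of variables, without needing to reinvoke the stability of $u^\lambda$. The observation is that the rescaling $u^\lambda(x) = u(\lambda x) + 2s\log \lambda$ was chosen precisely so that $e^{u^\lambda(x)} = \lambda^{2s} e^{u(\lambda x)}$, which makes $e^{u}\,dx$ transform in a scale-covariant way against the weight $r^{n-2s}$ appearing on the right-hand side of \eqref{h.7}.

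Concretely, I would write
\begin{equation*}
\int_{B_r} e^{u^\lambda(x)}\,dx = \lambda^{2s}\int_{B_r} e^{u(\lambda x)}\,dx = \lambda^{2s-n}\int_{B_{\lambda r}} e^{u(y)}\,dy,
\end{equation*}
using the substitution $y=\lambda x$. Since $\lambda\geq 1$ and $r\geq 1$, we have $\lambda r\geq 1$, so Lemma \ref{leh.1} applies on the ball $B_{\lambda r}$ and yields
\begin{equation*}
\int_{B_{\lambda r}} e^{u(y)}\,dy \leq C(\lambda r)^{n-2s}.
\end{equation*}
Combining the two displays, the $\lambda^{2s-n}$ and $\lambda^{n-2s}$ factors cancel, leaving exactly $Cr^{n-2s}$, which is \eqref{f.1}.

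There is no genuine obstacle here: the whole content is the scaling invariance of the estimate $\int_{B_r} e^u\,dx \lesssim r^{n-2s}$ under the map $u\mapsto u^\lambda$. The only mild point to flag is that we do not need to re-derive Lemma \ref{leh.1} for $u^\lambda$ (which would only give the estimate on balls of radius $\geq 1$ centered outside $B_{R/\lambda}$, with $\lambda$-dependent constants); the direct change of variables argument above is cleaner and gives a constant $C$ independent of $\lambda\geq 1$.
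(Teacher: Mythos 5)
Your proof is correct and is essentially what the paper has in mind: the paper states Corollary \ref{cor2.2} without proof, calling it ``a simple consequence of Lemma \ref{leh.1},'' and the intended simple consequence is precisely the change of variables $y=\lambda x$ you carry out, in which the Jacobian $\lambda^{-n}$ and the additive shift $2s\log\lambda$ conspire with the exponent $n-2s$ so that the scale factors cancel. Your remark that this route is cleaner than reproving Lemma \ref{leh.1} for each $u^\lambda$ (which would require tracking that the implied constant, depending on the cut-off $\eta_{R/\lambda}$ and on $\int_{B_{2R/\lambda}}e^{u^\lambda}dx$, stays bounded uniformly in $\lambda\geq 1$) is well taken and shows you understood where the potential pitfall lies.
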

	
	 With the help of above decay estimate on $e^{u^\l}$, we  show that $e^{u^\l}\in L_{\mu}(\B)$ for some $\mu<0$:
	
	\begin{lemma}
		\label{leh.3}
		For $\delta>0$  there exists $C=C(\delta)>0$ such that
		\begin{equation*}
		\int_{\B}\frac{e^{u^\l}}{1+|x|^{n-2s+\delta}}dx \leq C\quad\text{for every }\lambda\geq1.
		\end{equation*}
	\end{lemma}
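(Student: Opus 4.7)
The plan is to split $\mathbb{R}^n$ into the unit ball and dyadic annuli and apply the uniform growth bound from Corollary \ref{cor2.2} on each piece. The integrability weight $(1+|x|^{n-2s+\delta})^{-1}$ provides the extra $\delta$ of decay needed to convert the bound $\int_{B_r} e^{u^\lambda}\,dx \leq C r^{n-2s}$ into a convergent dyadic sum, giving a constant independent of $\lambda$.

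More precisely, I would write
\begin{equation*}
\int_{\mathbb{R}^n}\frac{e^{u^\lambda}}{1+|x|^{n-2s+\delta}}\,dx = \int_{B_1}\frac{e^{u^\lambda}}{1+|x|^{n-2s+\delta}}\,dx + \sum_{k=0}^{\infty}\int_{A_k}\frac{e^{u^\lambda}}{1+|x|^{n-2s+\delta}}\,dx,
\end{equation*}
where $A_k := B_{2^{k+1}}\setminus B_{2^k}$. For the ball $B_1$, the weight is bounded below by $\frac{1}{2}$ and Corollary \ref{cor2.2} (applied with $r=1$) gives $\int_{B_1} e^{u^\lambda}\,dx \leq C$, so this term is uniformly controlled. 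For the annuli, the weight satisfies $1+|x|^{n-2s+\delta} \geq 2^{k(n-2s+\delta)}$ for $x\in A_k$, while Corollary \ref{cor2.2} with $r=2^{k+1}\geq 1$ yields $\int_{A_k} e^{u^\lambda}\,dx \leq \int_{B_{2^{k+1}}} e^{u^\lambda}\,dx \leq C\, 2^{(k+1)(n-2s)}$.

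Combining these two estimates on $A_k$ gives
\begin{equation*}
\int_{A_k}\frac{e^{u^\lambda}}{1+|x|^{n-2s+\delta}}\,dx \leq C\, 2^{(n-2s)}\, 2^{-k\delta},
\end{equation*}
so the sum $\sum_{k\ge 0} 2^{-k\delta}$ is a convergent geometric series (since $\delta>0$), and the total bound is independent of $\lambda\geq 1$. There is no real obstacle here: the entire argument is a routine dyadic decomposition, and the only thing one must check is that the crucial input (the linear-in-$r^{n-2s}$ growth from Corollary \ref{cor2.2}) holds \emph{uniformly} in $\lambda\geq 1$, which it does because the stability region of $u^\lambda$ is $\mathbb{R}^n\setminus B_{R/\lambda}\supseteq \mathbb{R}^n\setminus B_R$ and the proof of Lemma \ref{leh.1} applies verbatim to each $u^\lambda$.
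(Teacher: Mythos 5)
Your proof is correct and follows essentially the same route as the paper's: reduce to $\B\setminus B_1$, decompose into dyadic annuli, apply Corollary \ref{cor2.2} on each annulus, and sum the resulting geometric series $\sum_k 2^{-k\delta}$. Your added remark about why the bound is uniform in $\lambda\geq 1$ (the stability region only grows under the rescaling) is a correct and sensible sanity check, though the paper leaves it implicit since the uniformity is already built into the statement of Corollary \ref{cor2.2}.
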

	\begin{proof}
		It suffices to show that
		\begin{equation*}
		\int_{\B\setminus B_1}\frac{e^{u^\l}}{1+|x|^{n-2s+\delta}}dx\leq C.
		\end{equation*}
		By Corollary \ref{cor2.2}
		\begin{equation*}
		\int_{\B\setminus B_1}\frac{e^{u^\l}}{1+|x|^{n-2s+\delta}}dx
		=\sum_{i=0}^\infty\int_{2^i\leq |x|<2^{i+1}}\frac{e^{u^\l}}{1+|x|^{n-2s+\delta}}dx
		\leq C\sum_{i=0}^\infty\frac{1}{2^{i\delta }}<\infty.
		\end{equation*}	
		Hence we finish the proof.
	\end{proof}

	We set
	\begin{equation}
	\label{f.3}
	v^\l(x):=c(n,s)\int_{\B}\left(\frac{1}{|x-y|^{n-2s}}-\frac{1}{(1+|y|)^{n-2s}}\right)e^{u^\l(y)}dy,
	\end{equation}
	where $c(n,s)$ is chosen such that
	$$c(n,s)(-\Delta)^s\frac{1}{|x-y|^{n-2s}}=\delta(x-y).$$
	It is not difficult to see that $v^\l\in L^1_{\mathrm{loc}}(\B)$. In addition, we have that $v^\lambda\in L_s(\B)$. This is the conclusion of the following lemma: 
	
	\begin{lemma}
		\label{lef.1}
We have 
		\begin{equation}
		\label{f.4}
		\int_{\B}\frac{|v^\l(x)|}{1+|x|^{n+2s}}dx\leq C\quad \mbox{for every }\quad \l\geq1.
		\end{equation}
		Moreover, for every $R>0$ we have
		\begin{equation}
		\label{f.5}
		v^\l(x)=c(n,s)\int_{B_{2R}}\frac{1}{|x-y|^{n-2s}}e^{u^\l(y)}dy+O_R(1)\quad \mbox{for}~\l\geq 1,~|x|\leq R.
		\end{equation}
	\end{lemma}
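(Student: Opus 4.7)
The plan is to estimate the kernel
\[
K(x,y):=|x-y|^{2s-n}-(1+|y|)^{2s-n}
\]
by carefully exploiting its cancellation structure at infinity in $y$, and then combine Fubini with Corollary \ref{cor2.2} and Lemma \ref{leh.3} to close the bounds.

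For \eqref{f.4}, Fubini reduces the claim to showing
\[
I(y):=\int_{\B}\frac{|K(x,y)|}{1+|x|^{n+2s}}dx\leq \frac{C}{(1+|y|)^{n-2s+\delta}}
\]
for some $\delta>0$, after which Lemma \ref{leh.3} closes the argument. I would split the $x$-integration into three regions: (i) $|x|\leq|y|/2$, where a first-order Taylor expansion of $z\mapsto|z|^{2s-n}$ around $z=y$ produces the refined estimate $|K(x,y)|\leq C(1+|x|)(1+|y|)^{2s-n-1}$, exhibiting the crucial cancellation; (ii) $|x-y|\leq|y|/2$, where the local Riesz singularity is integrable against the weight $1+|x|^{n+2s}\sim|y|^{n+2s}$, which provides the decay in $|y|$; and (iii) $|x|\geq 2(1+|y|)$, where the weight $(1+|x|^{n+2s})^{-1}$ alone dominates both terms and a triangle inequality suffices. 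The bounded-$|y|$ regime is handled by direct pointwise bounds on $K$.

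For \eqref{f.5}, I split the defining integral for $v^\lambda$ into $y\in B_{2R}$ and $y\in\B\setminus B_{2R}$. On $B_{2R}$ the subtracted contribution equals $c(n,s)\int_{B_{2R}}(1+|y|)^{2s-n}e^{u^\lambda(y)}dy$, which is uniformly bounded in $\lambda$ by Corollary \ref{cor2.2} since $(1+|y|)^{2s-n}\leq 1$; this leaves exactly the stated main term modulo $O_R(1)$. On $\B\setminus B_{2R}$, for $|x|\leq R$ one has $|x-y|\geq|y|/2$, so the Taylor expansion of $z\mapsto|z|^{2s-n}$ at $z=y$ gives $|K(x,y)|\leq C_R|y|^{2s-n-1}$ uniformly in $|x|\leq R$; Lemma \ref{leh.3} with $\delta=1$ then yields a uniform $O_R(1)$ contribution.

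The main obstacle is extracting the cancellation in $K$. The triangle inequality alone yields only $|K(x,y)|\lesssim|y|^{2s-n}$ at infinity, which paired with the borderline growth $\int_{B_r}e^{u^\lambda}\leq Cr^{n-2s}$ from Corollary \ref{cor2.2} fails to produce a convergent integral. Gaining the extra factor $1/|y|$ via the Taylor expansion — which is possible precisely because of the $(1+|y|)^{2s-n}$ centering built into the definition of $v^\lambda$ — is what makes both estimates work uniformly in $\lambda\geq 1$.
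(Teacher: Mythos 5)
Your approach is essentially the same as the paper's: both reduce \eqref{f.4} via Fubini to a pointwise-in-$y$ bound on $\int_{\B}|K(x,y)|(1+|x|^{n+2s})^{-1}dx$, exploit the cancellation in $K$ via a mean-value/Taylor bound when $|x|\ll|y|$ (this is where the gain of $|y|^{-1}$ beyond the naive $|y|^{2s-n}$ comes from), handle the Riesz singularity near $x=y$ and the far-field separately, and then close the argument with Lemma~\ref{leh.3}; for \eqref{f.5} both use the same Lipschitz estimate on the kernel for $|y|\geq 2R$, $|x|\leq R$. There is one small gap: your three regions (i)--(iii) do not cover $\B$ for large $|y|$. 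You are missing the annular piece $\{|y|/2<|x|<2(1+|y|)\}\setminus B(y,|y|/2)$, which in the paper is the region $A_4$. On that set the crude bound $|K(x,y)|\lesssim|y|^{2s-n}$ together with the weight $\sim |y|^{-(n+2s)}$ and volume $\sim|y|^{n}$ gives a contribution $\lesssim |y|^{-n}\leq C|y|^{-(n-2s+\delta)}$ for any $\delta\leq 2s$, so adding this fourth region completes the decomposition and yields the same conclusion. With that repair the argument is correct and mirrors the paper's.
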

	
	
	\begin{proof}
		Let us first set
		$$f(x,y)=\left(\frac{1}{|x-y|^{n-2s}}-\frac{1}{(1+|y|)^{n-2s}}\right),$$
		and estimate the term
		$$E(y):=\int_{\B}\frac{|f(x,y)|}{1+|x|^{n+2s}}dx\quad\mbox{for}\quad |y|\geq 2.$$
		We split $\B$ into
		$$\B=\bigcup_{i=1}^4A_i,$$
		where
		\begin{equation*}
		A_1:=B_{|y|/2},~ A_2:=\B\setminus B_{2|y|},~
		A_3:=B(y,|y|/2),~A_4:=\B\setminus (A_1\cup A_2\cup A_3).
		\end{equation*}
Then the following estimates hold
		\begin{equation*}
		|f(x,y)|\leq C
		\begin{cases}
		\dfrac{1+|x|}{|y|^{n-2s+1}} \quad &\mbox{if}~x\in A_1,\\
		\dfrac{1}{|y|^{n-2s}}  \quad &\mbox{if}~x\in A_2\cup A_4,\\
		\dfrac{1}{|y|^{n-2s}}+\dfrac{1}{|x-y|^{n-2s}} \quad &\mbox{if}~x\in A_3.
		\end{cases}
		\end{equation*}
We write 
		\begin{equation*}
		E(y)=\sum_{i=1}^4 E_i(y),\quad E_i(y)=\int_{A_i}\dfrac{|f(x,y)|}{1+|x|^{n+2s}}dx.
		\end{equation*}
		For $E_1(y)$, one has
		\begin{equation*}
		|E_1(y)|\leq\dfrac{C}{|y|^{n-2s+1}}\int_{A_1}\dfrac{1+|x|}{1+|x|^{n+2s}}dx\leq \dfrac{C}{|y|^{n-2s+\gamma}}
		\end{equation*}
		where
		\begin{equation*}
		\gamma=\begin{cases}
		1  ~&\mbox{if}~2s>1,\\
		2s ~&\mbox{if}~2s\leq 1.
		\end{cases}
		\end{equation*}
		Next, we bound the second  and the fourth term
		\begin{equation*}
		|E_2(y)|+|E_4(y)|\leq\frac{C}{|y|^{n-2s}}\int_{\B\setminus A_1}\frac{1}{1+|x|^{n+2s}}dx\leq \frac{C}{|y|^n}.
		\end{equation*}
		While for the last term $E_3(y)$, we notice that $|x|\sim |y|$ for $x\in A_3$.  Therefore, 
		\begin{equation*}
		\begin{aligned}
		|E_3(y)|&\leq \frac{C}{|y|^n}+\frac{C}{|y|^{n+2s}}
		\int_{A_3}\frac{1}{|x-y|^{n-2s}}dx\\
		&\leq\frac{C}{|y|^n}+\frac{C}{|y|^{n+2s}}
		\int_{B_{3|y|}}\frac{1}{|x|^{n-2s}}dx\leq\frac{C}{|y|^n}.
		\end{aligned}
		\end{equation*}
		Thus, there exists $\gamma>0$ such that
		\begin{equation*}
		|E(y)|\leq\dfrac{C}{|y|^{n-2s+\gamma}}\quad \mbox{for}\quad |y|\geq 2.
		\end{equation*}
		Therefore, by Lemma \ref{leh.3}   we get that
		\begin{equation}
		\label{f.111}
		\begin{aligned}
		\int_{\B}\dfrac{|v^\l(x)|}{1+|x|^{n+2s}}dx\leq~&
		C\int_{\mathbb{R}^n\setminus B_2}e^{u^\l(y)}|E(y)|dy
		+C\int_{\B}\int_{B_2}\frac{e^{u^\l(y)}}{|x-y|^{n-2s}(1+|x|^{n+2s})}dydx\\
		&+C\int_{\B}\int_{B_2}\frac{e^{u^\l(y)}}{((1+|y|)^{n-2s})(1+|x|^{n+2s})}dydx
		<+\infty.
		\end{aligned}
		\end{equation}
		This finishes the proof of \eqref{f.4}. 
		
		To prove \eqref{f.5} we notice that 
		\begin{equation*}
		\left|\frac{1}{|x-y|^{n-2s}}-\frac{1}{(1+|y|)^{n-2s}}\right|\leq C\frac{1+|x|}{|y|^{n-2s+1}} \quad\text{for }|x|\leq R,\, |y|\geq 2R.
		\end{equation*}
		Then
		\begin{equation*}
		\begin{aligned}
		\int_{|y|\geq 2|x|}\left|\frac{1}{|x-y|^{n-2s}}-\frac{1}{(1+|y|)^{n-2s}}\right|e^{u^\l(y)}dy
		\leq ~&C(1+|x|)\int_{|y|\geq2|x|}\frac{e^{u^\l(y)}}{|y|^{n-2s+1}}dy\\
		\leq ~&C(1+|x|),
		\end{aligned}
		\end{equation*}
		where we used Lemma \ref{leh.3} in the last inequality. Then \eqref{f.5} follows immediately.
	\end{proof}
	
	\begin{lemma}\label{u-representation}For some $c_\lambda\in\RR$  we have
		\begin{equation}
		\label{f.11}
		u^\l(x)=c(n,s)\int_{\B}\left(\frac{1}{|x-y|^{n-2s}}-\frac{1}{(1+| y|)^{n-2s}}\right)e^{u^\l(y)}dy+c_\l.
		\end{equation}
	\end{lemma}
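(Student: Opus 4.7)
The plan is to show that $w^\l := u^\l - v^\l$ is a constant; this constant will play the role of $c_\l$. Since by assumption $u \in L_s(\B)$, scaling gives $u^\l \in L_s(\B)$ for every $\l \geq 1$, and combined with Lemma \ref{lef.1} this yields $w^\l \in L_s(\B)$.

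Next I would verify that $v^\l$ satisfies $\s v^\l = e^{u^\l}$ in the distributional sense of \eqref{weak-fg}. Fix $\vp \in C_c^\infty(\B)$ and substitute the definition \eqref{f.3} into $\int_\B v^\l\,\s\vp\, dx$. By Fubini (justified by the decay $|\s\vp(x)| \leq C(1+|x|)^{-n-2s}$ together with the integrability provided by Lemma \ref{lef.1}) one may swap the order of integration. The subtraction piece involving $(1+|y|)^{2s-n}$ is $x$-independent, and its contribution vanishes because $\int_\B \s\vp\,dx = 0$ for $\vp \in C_c^\infty(\B)$ (seen e.g.\ from $\widehat{\s\vp}(0)=0$). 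The remaining piece uses that $c(n,s)|x|^{2s-n}$ is the Riesz kernel, the fundamental solution of $\s$, giving $c(n,s)\int |x-y|^{2s-n}\,\s\vp(x)\,dx = \vp(y)$; integrating against $e^{u^\l(y)}\,dy$ then yields $\int v^\l\,\s\vp\, dx = \int e^{u^\l}\vp\, dx$. Combined with \eqref{weak-fg} applied to $u^\l$, one concludes $\s w^\l = 0$ in $\B$ in the distributional sense.

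At this point I would invoke a Liouville-type theorem for the fractional Laplacian: an $L_s(\B)$-distributional solution of $\s w = 0$ on $\B$ must be a polynomial of degree strictly less than $2s$. For $s \in (0, 1/2]$ this is automatically a constant, and \eqref{f.11} follows immediately. For $s \in (1/2, 1)$ one must additionally exclude a degree-one term. This would be done by a growth estimate: from \eqref{f.5} combined with Lemma \ref{leh.1} one obtains $|v^\l(x)| \leq C(1+\log(2+|x|))$, and a matching upper bound on $u^\l$---obtained for instance from the Poisson representation \eqref{Poisson-repre} together with the bound on $\int_{B_r} e^{u^\l}$ in Corollary \ref{cor2.2}---forces $w^\l(x) = o(|x|)$ at infinity, ruling out any linear part. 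Taking $c_\l$ to be the resulting constant value of $w^\l$ produces \eqref{f.11}.

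The main obstacle is the last step: converting the averaged $L^1$-bound on $e^{u^\l}$ into a sufficiently sharp upper bound on $u^\l$ itself when $s \in (1/2,1)$, so as to defeat the linear polynomial allowed by fractional Liouville. The remaining ingredients---the Fubini-based identification $\s v^\l = e^{u^\l}$ and the distributional fractional Liouville principle---are essentially mechanical once the $L_s$-integrability provided by Lemma \ref{lef.1} is in hand.
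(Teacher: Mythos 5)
Your overall strategy — decompose $u^\l = v^\l + w^\l$, verify $\s v^\l = e^{u^\l}$ distributionally via Fubini, invoke a fractional Liouville theorem to conclude $w^\l$ is a polynomial of degree at most one, and then rule out the linear term — is exactly the structure of the paper's proof, and the Fubini computation is fine. However, the last step, which you yourself flag as ``the main obstacle,'' is a genuine gap, and the two routes you propose for closing it don't work.

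First, the claimed two-sided bound $|v^\l(x)|\leq C(1+\log(2+|x|))$ does not follow from \eqref{f.5} and Lemma \ref{leh.1}. Equation \eqref{f.5} writes $v^\l(x) = c(n,s)\int_{B_{2R}}|x-y|^{2s-n}e^{u^\l(y)}\,dy + O_R(1)$ on $B_R$. The potential term is nonnegative, so the lower bound $v^\l \geq O_R(1)$ is clear, but an \emph{upper} bound requires pointwise control of the singular integral $\int |x-y|^{2s-n} e^{u^\l(y)}\,dy$, which the merely $L^1$ information in Lemma \ref{leh.1} cannot give: $e^{u^\l}$ could concentrate arbitrarily near $x$. (Such an upper bound is obtainable in the paper, Lemma \ref{lemma-3.2}, but only under the extra assumption $n<10s$ and, circularly, it already uses the representation formula \eqref{f.11} you are trying to prove.) Second, there is no straightforward way to upgrade the integral bound $\int_{B_r}e^{u^\l}\leq Cr^{n-2s}$ into a pointwise sublinear upper bound on $u^\l$ via the Poisson representation; the Poisson kernel gives information about $\ou^\l$ in the half-space, not a pointwise ceiling on the trace.

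The paper instead works only with a \emph{lower} bound, which is what the $L^1$ bound on $e^{u^\l}$ does yield: a dyadic-annulus estimate shows $v^\l(x)\geq -C\log|x|$ for $|x|$ large (this is \eqref{f.10}). Consequently $u^\l(x) = v^\l(x) + w^\l(x) \geq -C\log|x| + w^\l(x)$. If $w^\l(x) = a\cdot x + b$ with $a\neq 0$, then along the ray in the direction of $a$ the right-hand side grows linearly, forcing $\int_{B_r}e^{u^\l}$ to grow exponentially in $r$, contradicting $\int_{B_r}e^{u^\l}\leq Cr^{n-2s}$ from Corollary \ref{cor2.2}. This is more elementary than your route and does not require the dichotomy $s\lessgtr 1/2$ or any extra integrability hypotheses; once you know $w^\l$ is at most linear, the lower bound on $v^\l$ plus the integral bound on $e^{u^\l}$ suffice.
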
\begin{proof} According to the definition of $v^\l$, we can easily see that $h^\lambda:=u^\l-v^\l$ is a $s$-harmonic function in $\B$. In the spirit of   \cite[Lemma 2.4]{ha} one can show that   $h^\l $ is either a constant, or   a polynomial of degree one. In order to    rule out the second possibility,   we first show  
		\begin{equation}
		\label{f.10}
		v^\l(x)\geq -C\log|x|\quad\mbox{for}~|x|~\mbox{large},
		\end{equation} for some constant $C>0$. 
		Indeed,  by \eqref{f.1} we have
		\begin{align*}
		v^\l(x)&\geq -C\left(1+\int_{2\leq|y|\leq2|x|}\frac{1}{(1+|y|)^{n-2s}}
		e^{u^\l(y)}dy\right)\\
		&\geq -C-C\sum_{i=1}^{[\log(2|x|)]}\int_{2^i\leq |y|\leq 2^{i+1}}\frac{1}{(1+|y|)^{n-2s}}e^{u^\l(y)}dy\\
		&\geq -C- C\log|x|.
		\end{align*}
		where $[x]$ denotes the integer part of $x$. Hence, \eqref{f.10} is proved. Therefore
		$$u^\l(x)\geq -C\log|x| +h^\l(x).$$
		Using \eqref{f.1} again, we derive that $h^\l  \equiv c_\l$ for some constant $c_\l\in\mathbb{R}$.
	\end{proof}

	\medskip
	\section{Higher order integrability}
	In this section we prove a higher order integrability of the nonlinearity $e^u$ on the region where $u$ is stable.  More precisely, we establish the following Farina's estimate, see \cite{f,df} for the classical case.
	\begin{proposition}  
		\label{prop-2.6}
		Let $u\in L_s(\B)\cap \dot H^s_{\mathrm{loc}}(\B)$ be a solution to \eqref{fg-1}. Assume that $u$ is stable on $\B\setminus B_R$ for some $R>0$. Then for every $p\in [1,\min\{5,1+\frac{n}{2s}\})$ there exists $C=C(p)>0$ such that for $r$ large
		\begin{align}
		\label{est-p-1}
		\int_{B_{2r}\setminus B_{r}}e^{pu}dx\leq Cr^{n-2ps}.
		\end{align}
In particular, 
		\begin{itemize}
			\item[(i)] for $|x|$ large,
			\begin{align}
			\label{est-p-2} \int_{B_{|x|/2}(x)}e^{pu (y)}dy\leq C(p)|x|^{n-2ps} \quad\text{for every }p\in[1,\min\{5,1+\frac{n}{2s}\}),
			\end{align}
			\item[(ii)] for $r$ large
			\begin{align} \label{est-p-3}\int_{B_r\setminus  B_{2R}}e^{pu}dx\leq C(p)r^{n-2ps}\quad \text{for  every }p\in[1,\min\{5, \frac{n}{2s}\}).
			\end{align}
		\end{itemize}
	\end{proposition}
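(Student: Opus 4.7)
The strategy is a fractional adaptation of the Farina test function argument, run as a bootstrap starting from the $L^1$ estimate $\int_{B_r}e^u\,dx\le Cr^{n-2s}$ given by Corollary~\ref{cor2.2}. First I would reduce to proving \eqref{est-p-1}: bound \eqref{est-p-2} follows by a standard finite covering argument on $B_{|x|/2}(x)\subset B_{2|x|}\setminus B_{|x|/2}$, and \eqref{est-p-3} follows by summing \eqref{est-p-1} over dyadic annuli, the resulting geometric series being summable precisely when $p<n/(2s)$.

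Fix $r$ so large that $B_{4r}\setminus B_{r/2}\subset\B\setminus B_R$, and pick a smooth cutoff $\eta_r\in C_c^\infty(B_{4r}\setminus B_{r/2})$ with $\eta_r\equiv 1$ on $B_{2r}\setminus B_r$, $|\nabla\eta_r|\le C/r$, and $|(-\Delta)^{s/2}\eta_r|\le Cr^{-s}$. For a parameter $\alpha\in[0,2)$, I would insert the test function $\varphi=(e^{\alpha u}-1)\eta_r$ into the stability inequality \eqref{1.stablecondition}; since $u$ may blow up, one must first truncate at height $M$, run the argument, and pass $M\to\infty$ by monotone convergence, which is legitimate because $u\in \dot H^s_{\mathrm{loc}}$. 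The right-hand side of \eqref{1.stablecondition} then contains the target quantity $\int e^{(2\alpha+1)u}\eta_r^2\,dx$ with a favourable sign.

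To estimate the fractional Dirichlet energy of $\varphi$ on the left-hand side, I would split
$$\varphi(x)-\varphi(y)=\eta_r(x)\bigl(e^{\alpha u(x)}-e^{\alpha u(y)}\bigr)+\bigl(e^{\alpha u(y)}-1\bigr)\bigl(\eta_r(x)-\eta_r(y)\bigr)$$
and apply $(a+b)^2\le(1+\varepsilon)a^2+(1+1/\varepsilon)b^2$. The cross piece contributes a cutoff remainder of size $Cr^{-2s}\int_{\mathrm{supp}\,\eta_r}e^{2\alpha u}\,dx$. The main piece is controlled by a fractional analogue of the local identity $\int|\nabla e^{\alpha u}|^2=\frac{\alpha}{2}\int e^{(2\alpha+1)u}$, namely a Kato/convexity-type quadratic bound
$$\frac{c_{n,s}}{2}\iint\frac{(e^{\alpha u(x)}-e^{\alpha u(y)})^2}{|x-y|^{n+2s}}\eta_r^2(x)\,dx\,dy\le\frac{\alpha}{2}\int e^{(2\alpha+1)u}\eta_r^2\,dx+\text{cutoff remainder},$$
which is derived from the pointwise inequality $(-\Delta)^s(e^{\alpha u})\le\alpha e^{(\alpha+1)u}$, the equation \eqref{fg-1}, and a careful symmetrization in the two variables. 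For $\alpha<2$ the coefficient $\alpha/2$ is strictly less than $1$, so absorbing the main term into the right-hand side of stability yields
$$\int_{B_{2r}\setminus B_r}e^{(2\alpha+1)u}\,dx\le Cr^{-2s}\int_{B_{4r}}e^{2\alpha u}\,dx.$$
An induction on $p=2\alpha+1$ in small increments, combined with H\"older interpolation against the previous bound on the right-hand side, propagates the polynomial decay $Cr^{n-2ps}$ from the base case $p=1$ up to any $p<\min\{5,1+n/(2s)\}$, where the second constraint appears because each inductive step requires $2\alpha<p$ to stay inside the already-proven range.

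The main obstacle is the derivation of the sharp coefficient $\alpha/2$ rather than the naive $\alpha$ in the Kato-type inequality: only $\alpha/2<1$ (i.e.\ $p<5$) permits absorption, whereas the bare Kato inequality $\int e^{\alpha u}(-\Delta)^s(e^{\alpha u})\le\alpha\int e^{(2\alpha+1)u}$ would restrict us to $p<3$. In the local setting the sharp constant is immediate from a single integration by parts on $\int e^{2\alpha u}|\nabla u|^2\eta^2$; in the non-local case it requires either a commutator identity between $(-\Delta)^{s/2}$ and exponential composition, or an alternative test function $\varphi=e^{\alpha u/2}\eta_r$ paired with a Beckner-type quadratic symmetrization, and the resulting error terms must then be dovetailed with the cutoff control described above.
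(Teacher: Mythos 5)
Your proposal correctly identifies the target (a Farina-type bootstrap with cutoff functions on dyadic annuli, seeded by the $L^1$ bound of Corollary~\ref{cor2.2}), and you rightly identify that the whole range $p<5$ hinges on obtaining the sharp coefficient $\tfrac{\alpha}{2}$ rather than the constant $\alpha$ coming from the bare Kato inequality $(-\Delta)^s e^{\alpha u}\le \alpha e^{(\alpha+1)u}$. But this is precisely where your proof has a genuine gap: you posit the quadratic bound
\begin{equation*}
\frac{c_{n,s}}{2}\iint\frac{(e^{\alpha u(x)}-e^{\alpha u(y)})^2}{|x-y|^{n+2s}}\,\eta_r^2(x)\,dx\,dy\le\frac{\alpha}{2}\int e^{(2\alpha+1)u}\eta_r^2\,dx+\text{cutoff remainder}
\end{equation*}
without proof, and you acknowledge yourself that it is not a consequence of the pointwise Kato inequality. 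In the local case the factor $\alpha/2$ falls out of the chain rule identity $|\nabla e^{\alpha u}|^2=\alpha^2 e^{2\alpha u}|\nabla u|^2=\tfrac{\alpha}{2}\nabla e^{2\alpha u}\cdot\nabla u$, and then one integrates by parts against the equation. There is no analogue of this pointwise chain rule for $(-\Delta)^s$, so the inequality you need is not available by the ``commutator'' or ``symmetrization'' routes you gesture at, at least not without substantial new input; as you note, the direct nonlocal route only reaches $p<3$.

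The paper resolves exactly this difficulty by working with the Caffarelli--Silvestre extension $\bar u$ on $\mathbb{R}^{n+1}_+$, where the operator $\mathrm{div}(t^{1-2s}\nabla\cdot)$ is local and the chain rule is restored. It tests the extended equation \eqref{1.weak} with $e^{2\alpha\bar u_k}\Phi^2$ and the extended stability inequality \eqref{h.2} with $e^{\alpha\bar u_k}\Phi$ (here $\bar u_k=\min\{\bar u,k\}$ is a truncation at level $k$); the two resulting identities carry coefficients $2\alpha$ and $\alpha^2$ respectively in front of $\int t^{1-2s}e^{2\alpha\bar u_k}\Phi^2|\nabla\bar u_k|^2$, and eliminating this term produces the factor $2-\alpha$ in Lemma~\ref{lem-0.2}, giving the full range $\alpha<2$, i.e.\ $p<5$. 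The nonobvious technical point there is not the sharp constant but controlling the cross term $\int t^{1-2s}e^{2\alpha\bar u_k}\nabla\bar u\cdot\nabla\Phi^2$ as $k\to\infty$, which is where Lemma~\ref{lem-decay} intervenes. Your base case ($\alpha\in(0,\tfrac12)$ via H\"older against $L^1$) and the dyadic summation for \eqref{est-p-2} and \eqref{est-p-3} are correct and match the paper; but the central estimate that drives the induction is missing, and the purely nonlocal route you propose does not currently supply it. If you want to avoid the extension, you would need to prove a genuinely new fractional substitute for the sharp chain-rule identity; otherwise the extension is essentially unavoidable here.
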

	\medskip
	
	Before proving Proposition \ref{prop-2.6}, we shall first apply it to derive an upper bound  on $u$ in the following lemma:
	\begin{lemma}
		\label{lemma-3.2}
		Suppose that $n<10s$. If $u$ satisfies   the assumptions in Proposition \ref{prop-2.6},   then for $|x|$ large we have
		$$u(x)\leq -2s\log|x|+C.$$
	\end{lemma}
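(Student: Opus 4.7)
The plan is to rescale and reduce the claim to a uniform pointwise estimate on an annulus. For $\l\geq 1$ the function $u^\l(y):=u(\l y)+2s\log\l$ solves the same equation, is stable outside $B_{R/\l}$ (which sits inside $B_{1/4}$ once $\l\geq 4R$), and inherits the uniform growth bound $\int_{B_r}e^{u^\l}\,dy\leq Cr^{n-2s}$ from Corollary \ref{cor2.2}. Writing $A:=B_2\setminus B_{1/2}$, it will suffice to prove $u^\l(y)\leq C$ for $y\in A$ uniformly in large $\l$: taking $y=x/|x|$ and $\l=|x|$ then gives $u(x)\leq -2s\log|x|+C$.

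First I will establish a uniform bound $|v^\l|\leq C$ on $A$, where $v^\l$ is the renormalized potential defined in \eqref{f.3}. I split the integral $v^\l(y)=c(n,s)\int K(y,z)e^{u^\l(z)}\,dz$ with $K(y,z)=|y-z|^{-(n-2s)}-(1+|z|)^{-(n-2s)}$ into three pieces. On the far field $|z|\geq 4$ the cancellation estimate $|K(y,z)|\leq C|z|^{-(n-2s+1)}$ used in the proof of Lemma \ref{lef.1}, combined with the dyadic form of Corollary \ref{cor2.2}, makes this tail uniformly $O(1)$. On the intermediate region $\{|z|\leq 4,\,|y-z|\geq 1/4\}$ the kernel $K$ is bounded, so the contribution is controlled by $\int_{|z|\leq 4}e^{u^\l}\leq C$. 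The delicate piece is the near-diagonal region $\{|y-z|\leq 1/4\}$, where I apply H\"older's inequality with an exponent $p\in\bigl(\tfrac{n}{2s},\min(5,1+\tfrac{n}{2s})\bigr)$; such a $p$ exists precisely because $n<10s$. The $L^p$-norm of $e^{u^\l}$ on $B_{1/4}(y)$ is controlled uniformly by covering $B_{9/4}\setminus B_{1/4}$ with finitely many dyadic shells and applying Proposition \ref{prop-2.6} to $u^\l$, while the complementary $L^{p'}$-norm of $|y-z|^{-(n-2s)}$ is finite because $p>\tfrac{n}{2s}$ forces $(n-2s)p'<n$. Together with a lower bound $v^\l\geq -C$ on $A$ (which is a bounded-$|y|$ specialization of the estimate \eqref{f.10} in the proof of Lemma \ref{u-representation}), this yields $|v^\l|\leq C$ on $A$ uniformly for $\l\geq 4R$.

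Next I close the argument by Jensen's inequality. Applied to the convex function $t\mapsto e^t$ on $A$,
\[
\frac{1}{|A|}\int_A u^\l\,dy \leq \log\Bigl(\frac{1}{|A|}\int_A e^{u^\l}\,dy\Bigr)\leq C,
\]
the right-hand bound again coming from Corollary \ref{cor2.2}. Averaging the identity $u^\l=v^\l+c_\l$ from Lemma \ref{u-representation} pins down the constant,
\[
c_\l=\frac{1}{|A|}\int_A u^\l\,dy-\frac{1}{|A|}\int_A v^\l\,dy\leq C,
\]
uniformly in $\l\geq 4R$. Substituting back, $u^\l(y)=v^\l(y)+c_\l\leq C$ pointwise on $A$, which by the reduction in the first paragraph gives the claimed bound on $u$.

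The hard part will be verifying that the constants in Proposition \ref{prop-2.6} are $\l$-independent when the proposition is applied to the family $\{u^\l\}_{\l\geq 1}$. Since the Farina-type Moser iteration behind that proposition depends only on the stability inequality and on the $L^1$-growth bound $\int_{B_r}e^{u^\l}\leq Cr^{n-2s}$, both of which hold with constants independent of $\l\geq 1$, this uniformity should follow by a direct inspection of the proof.
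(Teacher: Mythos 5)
Your proof is correct, and the core estimates are exactly those in the paper's proof: the representation formula of Lemma \ref{u-representation}, the far-field cancellation $|K(y,z)|\lesssim |z|^{-(n-2s+1)}$ combined with a dyadic decomposition, the near-diagonal H\"older estimate with an exponent $p>\tfrac{n}{2s}$ from Proposition \ref{prop-2.6} (which is precisely where $n<10s$ enters), and Jensen's inequality. The difference is organizational. The paper works directly with $u$ and the monotone auxiliary function $w(r)=c(n,s)\int_{|y|\leq r}(1+|y|)^{-(n-2s)}e^{u(y)}\,dy$, proves the two-sided estimate $u(x)=-w(|x|)+O(1)$ via the same three-piece decomposition, and then applies Jensen on $B_r$ together with the monotonicity of $w$ to pin down the growth of $w$. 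You instead rescale to $u^\l$, prove the uniform two-sided bound $|v^\l|\leq C$ on the fixed annulus $A$, and apply Jensen on $A$ to bound $c_\l$ from above. Both routes are valid; yours is slightly longer but has the virtue of producing a clean uniform statement on an annulus for the whole rescaled family, which fits naturally with the blow-down machinery used elsewhere in the paper.

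One small remark on the part you flag as ``the hard part'': you do not actually need to re-inspect the Moser iteration to see that the constants in Proposition \ref{prop-2.6} are $\l$-independent. Conclusion \eqref{est-p-2}, applied directly to $u$, reads $\int_{B_{|x|/2}(x)}e^{pu}\,dy\leq C(p)|x|^{n-2ps}$ for $|x|$ large. Setting $\l=|x|$ and changing variables $y=\l z$, together with $e^{pu^\l(z)}=\l^{2ps}e^{pu(\l z)}$, turns this into $\int_{B_{1/2}(z_0)}e^{pu^\l}\,dz\leq C(p)$ for $|z_0|=1$; a finite covering of $A$ by such balls then gives the uniform $L^p$-bound you need. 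So the uniformity is a pure scaling identity rather than something that requires tracking constants through the iteration.
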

	\begin{proof}
		Set
		$$w (r):=c(n,s)\int_{|y|\leq r}\frac{1}{(1+|y|)^{n-2s}}e^{u(y)}dy,$$
		where $c(n,s)$ is as in \eqref{f.3}. It is easy to see that $w(r)$ is locally bounded by Lemma \ref{leh.1}. Next,
		we claim that for $|x|$ large we have \begin{align}
		\label{u-u}
		u(x)=-w(|x|)+O(1).
		\end{align}
		Using the estimate
		$$\left| \frac{1}{|x-y|^{n-2s}}-\frac{1}{(1+|y|)^{n-2s}}\right|\leq C\frac{|x|}{|y|^{n-2s+1}},\quad |x|\geq 1, \,|y|\geq \frac32|x|, $$
		we get
		\begin{align*}
		&\int_{|y|\geq \frac32|x|} \left| \frac{1}{|x-y|^{n-2s}}-\frac{1}{(1+|y|)^{n-2s}}\right|e^{u(y)}dy\\&\leq C|x|\int_{|y|\geq |x|} \frac{e^{u(y)}}{|y|^{n-2s+1}}dy \\&\leq C|x|\sum_{i=0}^\infty\frac{1}{(2^i|x|)^{n-2s+1}}\int_{2^i|x|\leq |y|\leq 2^{i+1}|x|}e^{u(y)}dy\\&\leq C,
		\end{align*}
		where \eqref{h.7} is used. Choosing $p\in (1,\min\{5,\frac{n}{2s}+1\})$ such that $(n-2s)p'<n$ (this is possible as $n<10s$), and together with \eqref{est-p-2}
		\begin{align*} \int_{B_\frac{|x|}{2}(x)}\frac{e^{u(y)}}{|x-y|^{n-2s}}dy &\leq \left(  \int_{B_\frac{|x|}{2}(x)}\frac{dy}{|x-y|^{(n-2s)p'}} \ \right)^\frac{1}{p'} \left(  \int_{B_\frac{|x|}{2}(x)}e^{pu(y)} dy\ \right)^\frac1p\\&\leq C.
		\end{align*}
		Using the above estimates, \eqref{h.7} and the representation formula \eqref{f.11} we get \eqref{u-u}. Then for $r$ large, using Jensens inequality we see that \begin{align*}
		\log\left( \frac{1}{|B_r|}\int_{B_r}e^{u(y)}dy\right)
		&\geq \frac{1}{|B_r|}\int_{B_r}u(y)dy\\ &=-\frac{1}{|B_r|}\int_{B_r}w(|y|)dy+O(1)\\
		&\geq -w(r)+O(1),
		\end{align*}
		where we used 	the fact that $w$ is monotone increasing in the last inequality. Thus, by \eqref{h.7}  we obtain
		$$-w(r)\leq \log(C r^{-2s})+O(1)=-2s\log r+O(1).$$
		This proves the lemma.
	\end{proof}
	
	The rest of this section is devoted to the proof of Proposition \ref{prop-2.6} and Theorem \ref{th1.2}. First, we notice that the stability condition \eqref{1.stablecondition} can be extended to  $\ou$.
	More precisely, if $u$ is stable in $\Omega$ then 
	\begin{equation}
	\label{h.2}
	\int_{\R}t^{1-2s}|\nabla\Phi|^2dxdt\geq \kappa_s\int_{\B}e^u\vp ^2dx, 
	\end{equation} 
	for every $\Phi\in C_c^\infty(\overline\R)$ satisfying $\vp(\cdot):=\Phi(\cdot,0)\in C_c^\infty(\Omega)$. 
	Indeed, if  $\overline{\vp }$ is the $s$-harmonic extension of $\vp $,   we have
	\begin{align*}
	\int_{\R}t^{1-2s}|\nabla\Phi|^2dxdt &\geq \int_{\R}t^{1-2s}|\nabla\overline\vp |^2dxdt\\&
	=\kappa_s\int_{\B}\vp (-\Delta)^s\vp  dx\geq \kappa_s\int_{\B}e^u\vp ^2dx.
	\end{align*}
	
	\medskip
	Before we study the equation \eqref{1.weak}, we present the following lemma which will be used later.
	
	\begin{lemma}
		\label{lem-2.7}Let $e^{\alpha u}\in L^1( \Omega) $ for some $\Omega\subset \B$. Then $t^{1-2s}e^{\alpha\ou}\in L^1_{\mathrm{loc}}(\Omega\times [0, \infty))$.
	\end{lemma}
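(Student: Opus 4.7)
The plan is to split the extension into contributions from close to $K$ and from far away, and dispose of each piece separately. Fix a compact set $K\subset\Omega\times[0,\infty)$ and choose a ball $B'\Subset\Omega$ strictly containing the $x$-projection $\pi(K)\subset\B$, so that $\delta:=\mathrm{dist}(\pi(K),\B\setminus B')>0$. Write $u=u_1+u_2$ with $u_1=u\chi_{B'}$ and $u_2=u\chi_{\B\setminus B'}$, and by linearity of the Poisson integral $\ou=\ou_1+\ou_2$, where $\ou_i$ is the extension of $u_i$. All quantities are well defined since $u\in L_s(\B)\subset L^1_{\mathrm{loc}}(\B)$.

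For $\ou_2$, the separation $|x-y|\geq\delta$ for $X=(x,t)\in K$ and $y\in\B\setminus B'$, combined with the elementary fact that $|(x-y,t)|\geq c(1+|y|)$ for a constant depending only on $K$ and $B'$ (treat $|y|$ bounded and $|y|$ large separately), gives the pointwise bound $P(X,y)\leq Ct^{2s}(1+|y|)^{-n-2s}$. Since $u\in L_s(\B)$ and $t$ is bounded on $K$, this forces $|\ou_2|\leq C$ uniformly on $K$, and hence so is $e^{\alpha\ou_2}$.

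For $\ou_1$, since $P(X,\cdot)\,dy$ is a probability measure on $\B$ and $\xi\mapsto e^{\alpha\xi}$ is convex, Jensen's inequality applies (the $P(X,\cdot)$-integrability of $u_1$ is immediate from $u\in L^1_{\mathrm{loc}}$ and the boundedness of $P(X,\cdot)$ on $B'$). Using $u_1\equiv 0$ on $\B\setminus B'$,
\begin{equation*}
e^{\alpha\ou_1(X)}\leq\int_{\B}P(X,y)e^{\alpha u_1(y)}\,dy\leq 1+\int_{B'}P(X,y)e^{\alpha u(y)}\,dy.
\end{equation*}
Multiplying by $t^{1-2s}$, integrating over $K$, and applying Fubini reduces matters to the uniform estimate $\sup_{y\in B'}\int_K t^{1-2s}P(X,y)\,dX<\infty$. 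Since
\begin{equation*}
t^{1-2s}P(X,y)=d_{n,s}\,t\,|(x-y,t)|^{-n-2s}\leq d_{n,s}|(x-y,t)|^{-(n+2s-1)},
\end{equation*}
a spherical-coordinates change centered at $(y,0)$ in $\mathbb{R}^{n+1}$ bounds the integral by a multiple of $\int_0^R\rho^{1-2s}\,d\rho$, which is finite exactly because $s<1$ and uniform in $y\in B'$.

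The main technical point is this last weighted-kernel estimate, which reflects the $A_2$-character of $t^{1-2s}$ in the Caffarelli--Silvestre framework. Once it is in hand, the assumption $e^{\alpha u}\in L^1(\Omega)$ transfers via Jensen and Fubini to $t^{1-2s}e^{\alpha\ou_1}\in L^1(K)$, and the bounded factor $e^{\alpha\ou_2}$ poses no further difficulty. No regularity of $u$ beyond $u\in L_s(\B)$ is used, and the argument is valid for every real $\alpha$ because $e^{\alpha\cdot}$ is convex.
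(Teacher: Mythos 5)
Your proof is correct, and at its core it is the same Jensen-plus-locality argument as the paper's: localize near the region of interest, bound the far contribution to the Poisson integral using $u\in L_s(\B)$, apply Jensen's inequality to the near contribution, and then rely on $s<1$ to absorb the weight $t^{1-2s}$. The differences are in execution. The paper keeps $u$ intact and applies Jensen to the \emph{normalized} probability measure $P(X,y)\,dy/g(x,t)$ on $\Omega$, where $g(x,t)=\int_\Omega P(X,y)\,dy$; this requires both $g\le 1$ (to bound $e^{\alpha g u}$ by $\max\{e^{\alpha u},1\}$) and a lower bound $g\ge C>0$ (to control the factor $1/g$). You instead extend $u_1=u\chi_{B'}$ by zero and apply Jensen to the unnormalized probability measure $P(X,\cdot)\,dy$ on all of $\B$, which eliminates the normalization bookkeeping entirely. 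The other difference is in how the weight is handled: the paper proves a $t$-uniform bound $\int_{\Omega_0}e^{\alpha\ou(\cdot,t)}\,dx\le C$ and then integrates $\int_0^R t^{1-2s}\,dt<\infty$ trivially, whereas you push $t^{1-2s}$ inside via Fubini and establish the weighted-kernel estimate $\sup_{y\in B'}\int_K t^{1-2s}P(X,y)\,dX<\infty$ by a spherical-coordinates computation in $\RR^{n+1}$. Both roads reduce to $1-2s>-1$ at exactly the same step; your normalization-free route is slightly cleaner, while the paper's $t$-uniform bound is a slightly more elementary way to finish. Either argument is valid for every real $\alpha$, as you note, since $\xi\mapsto e^{\alpha\xi}$ is convex.
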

	\begin{proof}
		Let $\Omega_0\Subset\Omega$ be fixed.   Since $ u\in  L_s(\B)$, we have for $x\in\Omega_0$ and $t\in(0,R)$
		$$\ou(x,t)\leq C+\int_{\Omega}u(y)P(X,y)dy=C+\int_{\Omega}g(x,t)u(y)\frac{P(X,y)dy}{g(x,t)},$$
		where
		$1\geq  g(x,t):=\int_{\Omega}P(X,y)dy\geq C$ for some positive constant $C$ depending on $R$, $\Omega_0$ and $\Omega$ only. Therefore, by Jensen's inequality
		\begin{align*}
		\int_{\Omega_0}e^{\alpha \ou(x,t)}dx
		&\leq C \int_{\Omega_0}\int_{\Omega}e^{\alpha g(x,t)u(y)}P(X,y)dydx\\
		&\leq C\int_{\Omega}\max\{e^{\alpha u(y)},1\}\int_{\Omega_0}P(X,y)dxdy
		\\&\leq C+C\int_{\Omega}e^{\alpha u(y)}dy,
		\end{align*}
		where the constant $C$ depends on $R,~\Omega_0$ and $\Omega$, but not on $t$. Hence,
		\begin{align*}
		\int_{\Omega_0\times(0,R)} t^{1-2s}e^{\alpha \ou(x,t)}dxdt \leq
		\int_0^R t^{1-2s}\int_{\Omega_0}e^{\alpha \ou(x,t)}dxdt <\infty.
		\end{align*}
		This finishes the proof.
	\end{proof}
	
	The following lemma is crucial for the proof of Proposition \ref{prop-2.6}.
	
	\begin{lemma}
		\label{lem-0.2}
		Let $u\in L_s(\B)\cap \dot H^s_{\mathrm{loc}}(\B)$ be a solution to \eqref{fg-1}. Assume that $u$ is stable in $\Omega\subseteq\B$. Let    $\Phi \in C_c^\infty({\overline\R})$ be of the form $\Phi (x,t) =\vp(x)\eta(t)$  for some $\vp\in C_c^\infty(\Omega)$  and $\eta \equiv 1$ on $[0,1]$.  Then for every $0<\alpha<2$  we have
		\begin{equation}
		\label{h.6}
		\begin{aligned}
		(2-\alpha)\kappa_s\int_{\B}e^{(1+2\alpha) u}\vp^2dx
		&\leq 2\int_{\R}t^{1-2s}e^{2\alpha\bar u}|\nabla \Phi|^2dxdt\\
		&\quad  -\frac12\int_{\R}e^{2\alpha\bar u}\nabla\cdot[t^{1-2s}\nabla\Phi^2]dxdt . \end{aligned}
		\end{equation}
	\end{lemma}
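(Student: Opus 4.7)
The strategy is the standard Farina-type manipulation adapted to the extension setting: test the stability inequality \eqref{h.2} with the non-smooth function $e^{\alpha\ou}\Phi$, test the weak equation \eqref{1.weak} with $\Psi=\tfrac{1}{2\alpha}e^{2\alpha\ou}\Phi^2$, and then combine the two resulting identities so that the weighted gradient term $\int_{\R}t^{1-2s}e^{2\alpha\ou}\Phi^2|\nabla\ou|^2\,dxdt$ cancels, leaving an inequality involving only derivatives of $\Phi$ on the right.

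Concretely, introduce
\[I_1:=\int_{\R}t^{1-2s}e^{2\alpha\ou}\Phi^2|\nabla\ou|^2\,dxdt,\qquad I_2:=\int_{\R}e^{2\alpha\ou}\nabla\cdot[t^{1-2s}\nabla\Phi^2]\,dxdt,\]
\[I_3:=\int_{\R}t^{1-2s}e^{2\alpha\ou}|\nabla\Phi|^2\,dxdt,\qquad M:=\int_{\B}e^{(1+2\alpha)u}\vp^2\,dx.\]
Expanding $|\nabla(e^{\alpha\ou}\Phi)|^2$ produces three pieces; the cross piece rewrites via $2\alpha e^{2\alpha\ou}\nabla\ou=\nabla(e^{2\alpha\ou})$ as $\tfrac12\nabla(e^{2\alpha\ou})\cdot\nabla\Phi^2$. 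A weighted integration by parts on $\R$ transfers derivatives onto $\Phi^2$; the boundary integral at $\{t=0\}$ vanishes because the assumption $\eta\equiv 1$ on $[0,1]$ forces $\partial_t\Phi^2\equiv 0$ near the base. Hence $\int_{\R}t^{1-2s}\nabla(e^{2\alpha\ou})\cdot\nabla\Phi^2\,dxdt=-I_2$, and \eqref{h.2} reduces to
\[\alpha^2 I_1-\tfrac12 I_2+I_3\ge\kappa_s M.\]
Plugging $\Psi=\tfrac{1}{2\alpha}e^{2\alpha\ou}\Phi^2$ into \eqref{1.weak} and applying the same integration by parts to the resulting cross piece yields the identity $\alpha^2 I_1=\tfrac{\alpha}{2}\kappa_s M+\tfrac14 I_2$. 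Substituting this into the stability inequality eliminates $I_1$, and the $I_2$-terms combine as $\tfrac14 I_2-\tfrac12 I_2=-\tfrac14 I_2$, so we obtain $I_3-\tfrac14 I_2\ge\tfrac{2-\alpha}{2}\kappa_s M$, which is exactly \eqref{h.6} after multiplication by $2$.

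\textbf{Main obstacle.} The analytic difficulty is that neither $e^{\alpha\ou}\Phi$ nor $\tfrac{1}{2\alpha}e^{2\alpha\ou}\Phi^2$ is smooth, and $\ou$ is not a priori bounded, so they cannot be inserted directly into \eqref{h.2} or \eqref{1.weak}. The plan is to replace $\ou$ by the truncation $\ou_k:=\min\{\ou,k\}$, perform all of the above computations with the admissible test functions $e^{\alpha\ou_k}\Phi$ and $\tfrac{1}{2\alpha}e^{2\alpha\ou_k}\Phi^2$ (legitimate after standard mollification, since $\ou\in\dot H^s_{\mathrm{loc}}$ by hypothesis), and then pass to the limit $k\to\infty$ via monotone/dominated convergence. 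Lemma \ref{lem-2.7} is the key integrability input here: it guarantees $t^{1-2s}e^{2\alpha\ou}\in L^1_{\mathrm{loc}}(\Omega\times[0,\infty))$, which makes each of the integrals $I_1,I_2,I_3$ converge as $k\to\infty$ and justifies the boundary-term integration by parts on $\partial\R$.
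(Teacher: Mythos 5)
Your formal algebra is exactly right and matches the paper's own strategy: testing the stability inequality \eqref{h.2} with $e^{\alpha\ou}\Phi$ and the weak equation \eqref{1.weak} with $\frac{1}{2\alpha}e^{2\alpha\ou}\Phi^2$, then eliminating $I_1$, does reproduce \eqref{h.6}. The gap is in the truncation-and-limit step, and it is a genuine one, not a routine technicality.

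First, after replacing $\ou$ by $\ou_k=\min\{\ou,k\}$, the cross term coming from the weak equation is
$\frac{1}{2\alpha}\int_{\R}t^{1-2s}e^{2\alpha\ou_k}\nabla\ou\cdot\nabla\Phi^2\,dxdt$,
with $\nabla\ou$ (not $\nabla\ou_k$). Since $\nabla e^{2\alpha\ou_k}=2\alpha e^{2\alpha\ou_k}\nabla\ou_k$, you cannot integrate this by parts the way you propose; the discrepancy lives on $\{\ou>k\}$ and equals $e^{2\alpha k}\int_{\{\ou>k\}}t^{1-2s}\nabla\ou\cdot\nabla\Phi^2\,dxdt$, which there is no a priori reason to expect to vanish as $k\to\infty$. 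The paper devotes Lemma~\ref{lem-decay} precisely to showing that this mismatch is $o(1)$ relative to $I_1^{(k)}$ along a subsequence; that lemma is not dispensable, and your proposal omits it.

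Second, your appeal to Lemma~\ref{lem-2.7} is circular for $\alpha$ close to $2$. That lemma \emph{assumes} $e^{2\alpha u}\in L^1_{\mathrm{loc}}(\Omega)$ and only then concludes $t^{1-2s}e^{2\alpha\ou}\in L^1_{\mathrm{loc}}$. But $e^{2\alpha u}\in L^1_{\mathrm{loc}}$ for $\alpha$ beyond $\frac12$ is precisely what the lemma is being used to establish; the only a priori input is $e^u\in L^1_{\mathrm{loc}}$. The paper breaks this circle with a bootstrap inside the proof: first obtain control for $\alpha\in(0,\frac12)$ (where $e^{2(\alpha+\ve)u}\in L^1_{\mathrm{loc}}$ follows by H\"older from $e^u\in L^1_{\mathrm{loc}}$), deduce $e^{(1+2\alpha)u}\in L^1_{\mathrm{loc}}$, and iterate up to $\alpha<2$; only after that can one send $k\to\infty$ in \eqref{9} to conclude $I_1<\infty$ and then pass to the limit in \eqref{7}--\eqref{8}. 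Your plan skips this iteration entirely and asserts dominated convergence without having established either $I_1<\infty$ or the required integrability of $e^{2\alpha u}$. Both points need to be supplied for the proof to be complete.
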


	\begin{proof}
		For $k\in\mathbb{N}$ we set $\ou_k:=\max\{\ou,k\}$, and let $u_k$ be the restriction of $\ou_k$ on $\B$.  It is easy to see that $e^{2\alpha\ou_k}\Phi^2$ is a good test function in \eqref{1.weak}. Therefore,
		\begin{equation}
		\begin{aligned}
		\label{7}
		&\kappa_s\int_{\B}e^ue^{2\alpha u_k}\vp^2dx\\
		&=2\alpha\int_{\R}t^{1-2s} e^{2\alpha\ou_k}\Phi^2\nabla\ou\cdot \nabla \ou_kdxdt +\int_{\R}t^{1-2s}e^{2\alpha\ou_k}\nabla\ou\cdot \nabla \Phi ^2dxdt\\
		&=2\alpha\int_{\R}t^{1-2s} e^{2\alpha\ou_k}\Phi^2\ | \nabla \ou_k|^2dxdt +\int_{\R}t^{1-2s}e^{2\alpha\ou_k}\nabla\ou\cdot \nabla \Phi^2dxdt.
		\end{aligned}
		\end{equation}
		Now we assume that $t^{1-2s}e^{2(\alpha+\ve)\ou}\in L^1_{\mathrm{loc}}(\Omega\times [0,\infty))$ for some $\ve>0$. Then by Lemma \ref{lem-decay} below, up to a subsequence,
		$$\kappa_s\int_{\B}e^ue^{2\alpha u_k}\vp^2dx=(2\alpha+o(1))\int_{\R}t^{1-2s} e^{2\alpha\ou_k}\Phi^2\ | \nabla \ou_k|^2dxdt+O(1).$$
		Taking $e^{\alpha\ou_k} \Phi$  as a test function in the stability inequality  \eqref{h.2}
		\begin{equation}
		\label{8}
		\begin{aligned}
		\kappa_s \int_{\B}e^ue^{2\alpha u_k}\vp^2dx &\leq \alpha^2 \int_{\R}t^{1-2s}\Phi^2e^{2\alpha\ou_k}|\nabla\ou_k|^2dxdt+ \int_{\R}t^{1-2s}e^{2\alpha\ou_k}|\nabla  \Phi|^2dxdt
		\\ &\quad +\frac12 \int_{\R}t^{1-2s} \nabla e^{2\alpha \ou_k}\nabla\Phi ^2dxdt
		\\ &=  \alpha^2 \int_{\R}t^{1-2s}\Phi^2e^{2\alpha \ou_k}|\nabla \ou_k|^2dxdt+ \int_{\R}t^{1-2s}e^{2\alpha\ou_k}|\nabla \Phi|^2dxdt
		\\ &\quad -\frac12 \int_{\R}e^{2\alpha\ou_k} \nabla\cdot [t^{1-2s}\nabla \Phi^2]dxdt,
		\end{aligned}
		\end{equation}
		where the last equality follows by integration by parts. Notice that the boundary term vanishes as $\eta(t)=1$ on $[0,1]$. From the above two relations we obtain
		\begin{equation}
		\label{9}
		\begin{aligned}
		&(2-\alpha-\ve)\kappa_s\int_{\B}e^ue^{2\alpha u_k}\vp^2dx
		\\&\leq(-2\alpha\ve+o(1))\int_{\R}t^{1-2s} e^{2\alpha\ou_k}\Phi^2|\nabla \ou_k|^2dxdt+O(1)
		\\&\quad+2\int_{\R}t^{1-2s}e^{2\alpha\ou_k}|\nabla \Phi|^2dx -\int_{\R}e^{2\alpha\ou_k}\nabla\cdot[t^{1-2s}\nabla \Phi^2]dxdt.
		\end{aligned}
		\end{equation}
		Again, as $\eta=1$ on $[0,1]$,  the second term   in the right hand side of the  following expression is identically zero for $0\leq t\leq 1$:
		$$\nabla\cdot[t^{1-2s}\nabla \Phi^2]=t^{1-2s}\eta^2\D_x\vp^2+\vp^2\partial_t(t^{1-2s}\partial_t\eta^2).$$
		Therefore, using Lemma \ref{lem-2.7}
		$$\left|\int_{\R}t^{1-2s}e^{2\alpha\ou_k}|\nabla \Phi|^2dxdt\right| +\left|\int_{\R}e^{2\alpha\ou_k}\nabla\cdot[t^{1-2s}\nabla \Phi^2]dxdt\right|\leq C.$$
		Thus,
		\begin{align*}
		(2-\alpha-\ve)\int_{\B}e^ue^{2\alpha u_k}\vp^2dx\leq C,
		\end{align*}
		provided $\int_{\Omega}e^{2(\alpha+\ve)u}dx<\infty.$ Now choosing $\alpha\in(0,\frac12)$ and $0<\ve<\frac 12-\alpha$ in the above relation, and then taking $k\to\infty$ we get that $e^{(1+2\alpha)u}\in L^1_{\mathrm{loc}}(\Omega)$. By an iteration argument we conclude that $e^{(1+2\alpha)u}\in L^1_{\mathrm{loc}}(\Omega)$ for every $\alpha\in(0,2)$.
		
		Next, sending $k\to\infty$ in \eqref{9} we see that
		$$\int_{\R}t^{1-2s}e^{2\alpha\ou}\Phi^2|\nabla\ou|^2dxdt<\infty.$$ Therefore, we can take limit $k\to\infty$ in \eqref{7} and \eqref{8}. Then the lemma would follow immediately as the second term on the right hand side of \eqref{7} (after taking $k\to\infty$) can be written as
		\begin{align*}
		\int_{\R}t^{1-2s}e^{2\alpha\ou}\nabla \ou\cdot\nabla\Phi^2dxdt
		&=\frac{1}{2\alpha} \int_{\R}t^{1-2s}\nabla e^{2\alpha\ou}\cdot\nabla \Phi^2dxdt\\
		&=-\frac{1}{2\alpha} \int_{\R} e^{2\alpha\ou}\nabla\cdot[t^{1-2s}\nabla \Phi^2]dxdt.
		\end{align*} Again, the boundary integral is zero as $\eta=1$ on $[0,1]$.
	\end{proof}

	\begin{lemma}
		\label{lem-decay} Let $\alpha>0$  and $\ve>0$ be such that $t^{1-2s}e^{2(\alpha+\ve) \ou}\in L^1_{\mathrm{loc}}(\overline{\R})$. Let $\Phi\geq 0$ be  a smooth function with compact support in $\overline{\R}$.  Assume that
		$$\int_{\R}t^{1-2s}e^{2\alpha\ou_k}\Phi|\nabla \ou_k|dxdt \to\infty. $$
		Then there exists a sub-sequence $\{k'\}\subset\{k\}$ such that  $$\int_{\R}t^{1-2s}e^{2\alpha\ou_{k'}}\Phi|\nabla \ou|dxdt
		=o(1)\int_{\R}t^{1-2s}e^{2\alpha\ou_{k'}}\Phi^2|\nabla \ou_{k'}|^2dxdt. $$ \end{lemma}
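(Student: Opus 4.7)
Write $A_k := \int_{\R} t^{1-2s} e^{2\alpha \bar u_k} \Phi |\nabla \bar u|\,dx\,dt$ and $B_k := \int_{\R} t^{1-2s} e^{2\alpha \bar u_k} \Phi^2 |\nabla \bar u_k|^2\,dx\,dt$. Reading the standing convention as $\bar u_k = \min\{\bar u,k\}$, so that $\nabla \bar u_k = \nabla\bar u \cdot 1_{\{\bar u\le k\}}$, I would split $A_k = D_k + I_k$ with
$$D_k := \int_{\R} t^{1-2s} e^{2\alpha \bar u_k}\Phi |\nabla\bar u_k|\,dx\,dt, \qquad I_k := e^{2\alpha k}\int_{\{\bar u>k\}} t^{1-2s}\Phi |\nabla \bar u|\,dx\,dt,$$
and then control the two pieces separately in terms of $B_k$.

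For the main piece $D_k$, the plan is to apply a weighted Cauchy--Schwarz inequality, pairing $\Phi^{1/2} t^{(1-2s)/2} e^{\alpha \bar u_k} |\nabla \bar u_k|$ with $\Phi^{1/2} t^{(1-2s)/2} e^{\alpha \bar u_k}$, which gives
$$D_k^2 \le B_k \cdot \int_{\mathrm{supp}(\Phi)} t^{1-2s} e^{2\alpha \bar u_k}\,dx\,dt.$$
Since $\bar u_k \le \bar u$ and $t^{1-2s} e^{2(\alpha+\varepsilon)\bar u}\in L^1_{\mathrm{loc}}(\overline{\R})$ by hypothesis, the second factor is uniformly bounded in $k$ (as $\mathrm{supp}(\Phi)$ is compact). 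Thus $D_k = O(B_k^{1/2})$; in particular the assumption $D_k\to\infty$ forces $B_k\to\infty$, and hence $D_k = o(B_k)$ along the full sequence.

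For the tail $I_k$, a second Cauchy--Schwarz gives
$$I_k^2 \le e^{4\alpha k}\cdot m(k)\cdot \widetilde T_k,$$
where $m(k) := \int_{\{\bar u > k\}\cap\mathrm{supp}(\Phi)} t^{1-2s}\,dx\,dt$ and $\widetilde T_k := \int_{\{\bar u>k\}} \Phi\, t^{1-2s} |\nabla \bar u|^2\,dx\,dt$. A weighted layer-cake identity combined with the integrability hypothesis shows that $s\mapsto e^{2(\alpha+\varepsilon)s} m(s)$ lies in $L^1((0,\infty),ds)$, so $\liminf_{k\to\infty} e^{2(\alpha+\varepsilon)k} m(k) = 0$; a subsequence $k_j\to\infty$ can therefore be extracted along which $e^{2(\alpha+\varepsilon)k_j} m(k_j)\to 0$. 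Along the same subsequence, $\widetilde T_{k_j}\to 0$ by dominated convergence, since $t^{1-2s}|\nabla \bar u|^2$ is locally integrable (from $u\in \dot H^s_{\mathrm{loc}}$, transferred to $\bar u$ via the extension). Plugging both into the bound yields $I_{k_j}^2 \le C\, e^{2(\alpha-\varepsilon)k_j}\cdot o(1)$, and absorbing this growth using $B_{k_j}\to\infty$ delivers $I_{k_j} = o(B_{k_j})$; combined with $D_{k_j} = o(B_{k_j})$ this gives $A_{k_j} = o(B_{k_j})$ as claimed.

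The hard part will be the quantitative balancing in the tail estimate: when $\alpha$ is close to or larger than $\varepsilon$, the exponential factor $e^{2(\alpha-\varepsilon)k}$ does not decay on its own, and the conclusion must be extracted from the combined smallness of the two $o(1)$ terms along the subsequence, together with the fact that $B_k$ is forced to grow at a prescribed rate through $B_k \ge C^{-1} D_k^2$. This is precisely why only a subsequence, rather than the full sequence, is asserted, and why the $\varepsilon$-margin in the integrability hypothesis is essential.
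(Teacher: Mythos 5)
Your decomposition $A_k = D_k + I_k$, the Cauchy--Schwarz bound $D_k = O(B_k^{1/2}) = o(B_k)$, and the layer-cake argument giving $e^{2(\alpha+\varepsilon)k}m(k)\to 0$ are all sound and broadly parallel to the paper's treatment of the "good" set $\{\bar u < k\}$ and the smallness of $\int_{\{\bar u\geq k\}\cap\mathrm{supp}\,\Phi} t^{1-2s}e^{2(\alpha+\varepsilon)\bar u}\,dxdt$. But the final step is a genuine gap, and you flag it yourself without resolving it. From $I_{k_j}^2 \le e^{2(\alpha-\varepsilon)k_j}\cdot o(1)$ and $B_{k_j}\to\infty$ you cannot deduce $I_{k_j}=o(B_{k_j})$: take $I_{k_j}\sim e^{(\alpha-\varepsilon)k_j/2}$ and $B_{k_j}\sim\log k_j$, which satisfy both hypotheses yet $I_{k_j}\gg B_{k_j}$ when $\alpha>\varepsilon$. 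Likewise "$B_k\ge C^{-1}D_k^2$" gives no rate, since $D_k\to\infty$ could be arbitrarily slow. So the conclusion is not "extracted from the combined smallness" as you hope; additional structure is needed.

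The paper closes this gap by arguing \emph{by contradiction}: assuming the conclusion fails provides the extra quantitative information $B_k \le C\,I_k$, which, combined with the Cauchy--Schwarz estimate for $I_k$, gives $B_k \le o(1)\,e^{(\alpha-\varepsilon)k}\,\mu(\{\bar u\ge k\})^{1/2}$ (with $d\mu = t^{1-2s}\Phi^2|\nabla\bar u|^2\,dxdt$). The crucial new ingredient, absent from your proposal, is the trivial lower bound $B_k \ge c\,e^{2\alpha k}\mu(\{k-1\le\bar u<k\})$ (restrict the integral defining $B_k$ to the slab $\{k-1\le\bar u<k\}$). Comparing these two bounds yields the recursion: if $\mu(\{k-1\le\bar u<k\})=o(1)e^{-\beta k}$ then also $\mu(\{\bar u\ge k\})=o(1)e^{-\beta k}$, which feeds back to improve the slab decay to $o(1)e^{-(\alpha+\varepsilon+\beta/2)k}$. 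Starting from $\beta=\alpha+\varepsilon$ and iterating finitely many times pushes the decay of $\mu(\{\bar u\ge k\})$ past $e^{-(2\alpha+\varepsilon)k}$, which finally kills the $e^{(\alpha-\varepsilon)k}$ growth and forces $B_k\to 0$, contradicting $D_k\to\infty$ via $D_k\le CB_k^{1/2}$. This iteration on level-set measures is what your argument is missing, and without it the "absorption" step does not go through.
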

	
	\begin{proof} We shall use the trivial facts $|\nabla\ou_k|\leq|\nabla\ou|$ on $\R$ and  $|\nabla\ou_k|=|\nabla\ou|$ on $\{\ou<k\}$. By H\"older inequality with respect to the measure $t^{1-2s}e^{2\alpha \ou_k}dxdt$, we get that
		\begin{equation}
		\label{10-1}
		\begin{aligned}
		\int_{\{\ou<k\}}t^{1-2s}e^{2\alpha\ou_k}\Phi|\nabla\ou|dxdt &\leq C
		\left(\int_{\R}t^{1-2s}e^{2\alpha\ou_k}\Phi^2|\nabla\ou_k|^2dxdt\right)^\frac12
		\\&=o(1)\int_{\R}t^{1-2s}e^{2\alpha\ou_k}\Phi^2|\nabla\ou_k|^2dxdt.
		\end{aligned}
		\end{equation}
		Therefore, if the lemma were false, we would have
		\begin{equation}
		\label{cont}
		\begin{aligned}
		&\int_{\R}t^{1-2s}e^{2\alpha\ou_k}\Phi^2|\nabla\ou_k| ^2dxdt\\
		&\leq Ce^{2\alpha k}\int_{\{\ou\geq k\}}t^{1-2s}\Phi|\nabla \ou| dxdt\\
		&\leq Ce^{(\alpha-\ve) k}\left( \int_{\{\ou\geq k\}}t^{1-2s}\Phi^2|\nabla \ou| ^2dxdt\right)^\frac12 \left(\int_{\{\ou\geq k\}} e^{2(\alpha +\ve)\ou}t^{1-2s}  dxdt\right)^\frac12\\
		&=o(1)  e^{(\alpha-\ve) k}\left( \int_{\{\ou\geq k\}}t^{1-2s}\Phi^2 |\nabla \ou| ^2dxdt\right)^\frac12.
		\end{aligned}
		\end{equation}
		In particular, setting $d\mu=t^{1-2s}\Phi^2|\nabla\ou|^2dxdt$, we get \begin{align}
		\label{10}
		e^{2\alpha k}\mu(\{k-1\leq \ou< k\})=o(1)e^{(\alpha-\ve)k}\mu(\{\ou\geq k\})^\frac12,
		\end{align}
		which gives
		\begin{align*}
		\mu(\{k-1\leq \ou\leq k\})=o(1)e^{-(\alpha+\ve) k},
		\end{align*}
		where in the last equality we have used that $\mu(\{u\geq k\})=o(1)$.
		
		Next, we claim that if
		\begin{align*}
		\mu(\{k-1\leq \ou<k\})=o(1)e^{-\beta k},
		\end{align*}
		for some $\beta>0$ then
		\begin{align*}
		\mu(\{\ou\geq k\})=o(1)e^{-\beta k}\quad\text{and }
		\mu(\{k-1\leq \ou<k\})=o(1)e^{-(\alpha+\ve+ \frac\beta2)k}.
		\end{align*}
		The first part of the claim follows from
		$$\mu(\{\ou\geq k\})=\sum_{\ell=1}^\infty\mu(\{k+\ell-1\leq \ou< k+\ell\}) =\sum_{\ell=1}^\infty o(1)e^{-\beta(k+\ell)}=o(1)e^{-\beta k},$$
		and the second part follows immediately from \eqref{10}.
		
		Since the hypothesis of the above claim holds with $\beta=\alpha+\ve$, a repeated use of it gives
		$$\mu(\{\ou\geq k\})=o(1)e^{-(\alpha+\ve)k\sum_{i=1}^N\frac{1}{i^2}},$$
		for every $N\geq 1$. Then we could choose a large number $N$ such that
		$$\mu(\{\ou\geq k\})=o(1)e^{-(2\alpha+\ve)k}.$$
		Going back to \eqref{cont}, we derive that
		$$ \int_{\R}t^{1-2s}e^{2\alpha\ou_k}\Phi^2|\nabla \ou_k| ^2dxdt=o(1),$$
		which contradicts to \eqref{10-1} and $\int_{\R}t^{1-2s}e^{2\alpha\ou_k}\Phi|\nabla\ou_k|dxdt\to+\infty.$ Hence we finish the proof.
	\end{proof}
	
	Now we are in a position to prove Proposition \ref{prop-2.6}. 
	
	\begin{proof}[Proof of Proposition \ref{prop-2.6}]
		It follows from Lemmas \ref{lem-2.7} and \ref{lem-0.2} that $e^{pu}\in L^1_{\mathrm{loc}}(\B\setminus B_R)$ for every $p\in [1,5)$. To prove \eqref{est-p-1} we first show it holds for $\alpha\in(0,\frac12)$. Indeed, by H\"older inequality and Lemma \ref{leh.1} we get that for $\alpha\in(0,\frac12)$
		\begin{equation*}
		\int_{B_{2r}\setminus B_r}e^{2\alpha u}dx\leq \left(\int_{B_{2r}\setminus B_r}e^udx\right)^{2\alpha}(\int_{B_{2r}\setminus B_r}1dx)^{1-2\alpha}\leq Cr^{n-4\alpha s}.
		\end{equation*}
		Next we claim that if
		\begin{align}
		\label{claim-2.31}
		\int_{B_{2r}\setminus B_{r}}e^{2\alpha u}dx\leq Cr^{n-4\alpha s}\quad \text{for  }r>2R,
		\end{align}
		for some $\alpha\in (0,\min\{\frac{n}{4s},2\})$, then
		\begin{align}
		\label{claim-2.32}\int_{B_{2r}\setminus B_{r}}e^{(1+2\alpha) u}dx\leq Cr^{n-2(1+2\alpha) s}\quad\text{for }r>3R.
		\end{align}
		By \eqref{claim-2.31}, it is not difficult to show that
		\begin{equation}
		\label{3.claim}
		\int_{B_r\setminus B_{2R}}e^{2\alpha u}dx\leq Cr^{n-4\alpha s}\quad \text{for  }r> 2R.
		\end{equation}
		Indeed, for  $r>2R$ of the form $r=2^{N_1}$ with some positive integer $N_1$,  and taking  $N_2$ to be the smallest integer such that $2^{N_2}\geq 2R$,  by  \eqref{claim-2.31} we deduce 
		\begin{equation}
		\label{3.claim1}
		\begin{aligned}
		\int_{B_{2^{N_1}}\setminus B_{2R}}e^{2\alpha u}dx=
		&\int_{B_{2^{N_2}}\setminus B_{2R}}e^{2\alpha u}dx+\sum_{\ell=1}^{N_1-N_2}\int_{B_{2^{N_2+\ell}}\setminus B_{2^{N_1+\ell}}}e^{2\alpha u}dx\\
		\leq & ~C+ C\sum_{\ell=1}^{N_1-N_2}(2^{N_2+\ell})^{n-4\alpha s}\\
		\leq & ~C2^{N_1(n-4\alpha s)},
		\end{aligned}
		\end{equation}
		where we used $n-4\alpha s>0.$ Then using the hypothesis \eqref{claim-2.31},   we derive the following decay estimate
		\begin{equation}
		\label{h.11}
		\begin{aligned}
		\int_{|y|\geq r}\frac{e^{2\alpha u(y)}}{|y|^{n+2s}}dy=~&
		\sum_{i=0}^\infty\int_{2^{i+1}r\geq |y|\geq 2^ir}\frac{e^{2\alpha u(y)}}{|y|^{n+2s}}dy\\
		\leq~&\frac{ C}{r^{2s+4\alpha s}}\sum_{i=0}^\infty\frac{1}{2^{(2+4\alpha)si}}
		\leq Cr^{-2s-4\alpha s}.
		\end{aligned}
		\end{equation}
		On the other hand, by \eqref{Poisson-repre} we get for $|x|\geq \frac32R$ that \begin{align*}
		\ou(x,t) &\leq C\frac{t^{2s}}{(R+t)^{n+2s}}\int_{|y|\leq 2R}u^+(y)dy+\int_{\B}\chi_{\B\setminus B_{2R}}(y)u(y)P(X,y)dy\\
		&\leq C+\int_{\B}\chi_{\B\setminus B_{2R}}(y)u(y)P(X,y)dy,
		\end{align*}
		where $\chi_A$ denotes   the characteristic function of a set $A$. Using Jensen's inequality
		\begin{align}
		\label{jensen-1}
		e^{2\alpha\ou(x,t)}\leq C\int_{\B}\left(e^{2\alpha u(y)} \chi_{\B\setminus B_{2R}}(y)+\chi_{B_{2R} }(y) \right)P(X,y)dy \quad\text{for }|x|\geq 3R.
		\end{align}
		For $r>3R$ and $t\in(0,r)$,  integrating both sides of the inequality \eqref{jensen-1} on $B_{r}\setminus B_{3R}$ with respect to $x$
		\begin{align*}
		\int_{B_{r}\setminus B_{3R}}e^{2\alpha\ou(x,t)}dx
		&\leq C \int_{|y|\leq 2R}\int_{|x|\leq r}P(X,y)dxdy+C\int_{|y|\geq 2r}\int_{|x|\leq r}e^{2\alpha u(y)}P(X,y)dxdy\\
		&\quad+C\int_{2R\leq |y|\leq 2r}e^{2\alpha u(y)}\int_{|x|\leq r}P(X,y)dxdy \\ &\leq CR^n +Cr^{n+2s}\int_{|y|\geq 2r}\frac{e^{2\alpha u(y)}}{|y|^{n+2s}}dy+C\int_{2R\leq |y|\leq 2r}e^{2\alpha u(y)}dy\\
		&\leq C+Cr^{n-4\alpha s}\leq Cr^{n-4\alpha s},
		\end{align*}
		where we used \eqref{claim-2.31}, \eqref{3.claim1}, and \eqref{h.11}.  Now we fix non-negative smooth functions $\vp$ on $\B$ and $\eta$ on $ [0,\infty)$ such that
		\begin{align*}
		\vp(x)=\left\{\begin{array}{ll}
		1 &\quad\text{on } B_{2}\setminus B_1\\
		\\
		0&\quad\text{on }B_{2/3}\cup B_{3}^c
		\end{array}\right.,\quad
		\eta(t)=\left\{\begin{array}{ll}
		1 &\quad\text{on }[0,1]\\
		\\
		0&\quad\text{on } [2,\infty)
		\end{array}\right..
		\end{align*}
		For $r>0$ we set $\Phi_r(x,t)=\vp(\frac xr)\eta(\frac tr)$. Then  $\Phi_r$ is a good test function in Lemma \ref{lem-0.2} for $r\geq 3R$. Therefore, as  $|\nabla \Phi_r |\leq  \frac Cr$, we obtain
		\begin{equation*}
		\begin{aligned}
		\int_{\R}t^{1-2s}e^{2\alpha\ou}|\nabla  \Phi_r |^2dxdt
		&\leq Cr^{-2}\int_0^{2r} t^{1-2s}\int_{B_{3r}\setminus B_{2r/3}}e^{2\alpha\ou(x,t)}dxdt\\
		&\leq Cr^{n-2-4\alpha s}\int_0^{2r}t^{1-2s} dt\\
		&\leq Cr^{n-2s(1+2\alpha)}.
		\end{aligned}
		\end{equation*}
		In a similar way we have
		\begin{equation*}
		\left|\int_{\R}e^{2\alpha\ou} \nabla\cdot [t^{1-2s}\nabla  \Phi_r^2]dxdt\right|\leq Cr^{n-2s(1+2\alpha)}.
		\end{equation*}
		Then \eqref{claim-2.32} follows from \eqref{h.6} of Lemma \ref{lem-0.2}. Thus, we prove the claim. Repeating the above arguments finitely many times we get \eqref{est-p-1}, while \eqref{est-p-2} follows immediately as $B_{|x|/2}(x)\subset B_{2r}\setminus B_{r/2}$ with $r=|x|$.
		
		In the spirit of the estimate   \eqref{3.claim1},  one can obtain  the second conclusion \eqref{est-p-3} of Proposition \ref{prop-2.6}.  
	\end{proof}

	We end this section by proving Theorem \ref{th1.2}. 
	\begin{proof}[Proof of Theorem \ref{th1.2}.]
Since  $u$ is stable in $\Omega$, following the proof of  Proposition \ref{prop-2.6}, we get that  $e^{u}\in L^p_{\mathrm{loc}}(\Omega)$ for every $p\in[1,5)$. We write $u=v+h$ where $$v(x):=c({n,s})\int_{\Omega}\frac{1}{|x-y|^{n-2s}}e^{u(y)}dy.$$ Then $(-\D)^sh=0$ in $\Omega$, and hence, $h$ is smooth in $\Omega$. For $n<10s $ we can choose $p\in (1,5)$ such that $p'(n-2s)<n$. Then using H\"older inequality and the fact that   $e^{p u}\in L^1_{\mathrm{loc}}(\Omega)$, we have that $ v\in L^\infty_{\mathrm{loc}}(\Omega)$. Thus, $ u \in L^\infty_{loc}(\Omega)$.  Smoothness of $u$ follows by the standard bootstrapping argument.	
	\end{proof}

	\medskip
	\section{proof of Theorem \ref{th1.1}}
	\subsection{Non-existence of stable homogeneous solution}
	In this subsection, we shall prove the non-existence of stable homogeneous solutions.
	\begin{theorem}
		\label{th3.1}
		There is no stable solution of \eqref{fg-1} of the form $\tau(\theta)-2s\log r$ provided
		\begin{equation}
		\label{3.1}
		\dfrac{\Gamma(\frac n2)\Gamma(1+s)}{\Gamma(\frac{n-2s}{2})}> \dfrac{\Gamma^2(\frac{n+2s}{4})}{\Gamma^2(\frac{n-2s}{4})}.
		\end{equation}
		Here $\theta=\frac{x}{|x|}\in\mathbb{S}^{n-1}.$
	\end{theorem}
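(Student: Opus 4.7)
My plan is to derive a contradiction by coupling the fractional Hardy inequality \eqref{1.hconst} with a spherical integral identity that $\tau$ must satisfy. Assume that $u(x)=\tau(\theta)-2s\log r$ is stable, so that \eqref{1.stablecondition} holds on all of $\B$.

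\textbf{Step 1 (a spherical identity).} I would test the weak formulation \eqref{weak-fg} against a radial cutoff $\vp_R\in C_c^\infty(\B\setminus\{0\})$, supported in an annulus. Since $\vp_R\in C_c^\infty(\B)$, the Fourier transform gives $\int_\B\s\vp_R\,dx=|\xi|^{2s}\widehat{\vp_R}(\xi)\big|_{\xi=0}=0$; combined with the radiality of $\s\vp_R$, this forces
\begin{equation*}
\int_\B \tau(\theta)\s\vp_R(x)\,dx=\Big(\int_\S\tau\,d\theta\Big)\frac{1}{|\S|}\int_\B \s\vp_R\,dx=0.
\end{equation*}
On the other hand, $u_{n,s}=-2s\log r+\log\lambda_{n,s}$ is itself a solution of \eqref{fg-1}, whose weak form applied to $\vp_R$ reads $-2s\int\log r\,\s\vp_R\,dx=\lambda_{n,s}\int|x|^{-2s}\vp_R\,dx$. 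Plugging both facts into \eqref{weak-fg} for our $u$ gives
$$\lambda_{n,s}\int_\B |x|^{-2s}\vp_R\,dx=\int_\B e^{\tau(\theta)}|x|^{-2s}\vp_R\,dx,$$
and since $\vp_R$ is radial this factorizes into the key spherical identity
\begin{equation*}
\frac{1}{|\S|}\int_\S e^{\tau(\theta)}\,d\theta=\lambda_{n,s}.
\end{equation*}

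\textbf{Step 2 (approximate Hardy extremizer and contradiction).} I would then apply \eqref{1.stablecondition} to radial test functions $\vp_\ve(x)=|x|^{-(n-2s)/2}\chi_\ve(|x|)$, where $\chi_\ve\in C_c^\infty((0,\infty))$ is a smooth cutoff equal to $1$ on $[2\ve,1/(2\ve)]$ and supported in $[\ve,1/\ve]$. These are the classical logarithmic cut-offs of the virtual extremizer $|x|^{-(n-2s)/2}$ of \eqref{1.hconst}, and by the sharpness (and non-attainment) of the constant $\Lambda_{n,s}$ one has
\begin{equation*}
\lim_{\ve\to 0}\frac{\dfrac{c_{n,s}}{2}\displaystyle\int_\B\!\int_\B\dfrac{(\vp_\ve(x)-\vp_\ve(y))^2}{|x-y|^{n+2s}}\,dx\,dy}{\displaystyle\int_\B|x|^{-2s}\vp_\ve^2\,dx}=\Lambda_{n,s}.
\end{equation*}
On the other hand, Step 1 together with the radiality of $\vp_\ve$ gives $\int_\B e^{\tau(\theta)}|x|^{-2s}\vp_\ve^2\,dx=\lambda_{n,s}\int_\B|x|^{-2s}\vp_\ve^2\,dx$. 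Dividing \eqref{1.stablecondition} (with $\vp=\vp_\ve$) by $\int_\B|x|^{-2s}\vp_\ve^2\,dx$ and letting $\ve\to 0$ then yields $\Lambda_{n,s}\ge \lambda_{n,s}$, in direct contradiction with \eqref{3.1}.

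The main technical point is justifying the extremizing asymptotic in Step 2 rigorously, i.e.\ expanding the Gagliardo seminorm of $\vp_\ve$ as $\Lambda_{n,s}\cdot|\S|\cdot 2\log(1/\ve)+O(1)$. This is a classical computation (Herbst; Frank--Lieb--Seiringer), which can also be carried out on the Caffarelli--Silvestre extension by testing against the extension of the profile $u_{n,s}$ and tracking the boundary error terms.
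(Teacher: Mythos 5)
Your proposal follows essentially the same two-step strategy as the paper: first derive the spherical identity $\frac{1}{|\mathbb{S}^{n-1}|}\int_{\mathbb{S}^{n-1}}e^{\tau}\,d\theta=\lambda_{n,s}$ by testing against radial functions (the paper uses $\s\log|x|^{-2s}=\lambda_{n,s}|x|^{-2s}$ directly, which is equivalent to your observation that $u_{n,s}$ itself solves the equation), and then test stability against logarithmic cutoffs of the Hardy quasi-extremizer $|x|^{-(n-2s)/2}$ to extract $\Lambda_{n,s}\ge\lambda_{n,s}$. The only difference is that you defer the asymptotic expansion of the Gagliardo seminorm along $\vp_\ve$ to classical references, whereas the paper carries it out explicitly by splitting off the $\Lambda_{n,s}$-term via \cite[Lemma 4.1]{fa} and showing the cross term $f_\ve(t)=\int r^{-1}\eta_\ve(r)(\eta_\ve(r)-\eta_\ve(rt))\,dr=O(\log t)$ contributes only $O(1)$.
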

	
	\begin{proof}
		It is easy to see that for any radially symmetric function $\vp\in C_c^\infty(\B)$
		\begin{equation*}
		\int_{\B}\tau(\theta)\s\vp dx=0.
		\end{equation*}
		Then  from \eqref{weak-fg}
		\begin{equation*}
		\begin{aligned}
		\int_{\B}e^{\tau(\theta)-2s\log|x|}\vp dx
		=~&\int_{\B}(\tau(\theta)-2s\log|x|)\s\vp dx\\
		=~&\int_{\B}(-2s\log|x|)\s\vp dx\\=~&A_{n,s}\int_{\B}\frac{\vp}{|x|^{2s}}dx,
		\end{aligned}
		\end{equation*}
		where  the last equality follows from  (see e.g. \cite{r1})
		$$\s\log\frac{1}{|x|^{2s}}=A_{n,s}\frac{1}{|x|^{2s}}=2^{2s}\frac{\Gamma(\frac n2)\Gamma(1+s)}{\Gamma(\frac{n-2s}{2})}\frac{1}{|x|^{2s}}.$$
		This leads to 
		\begin{equation*}
		0=\int_{\B}(e^{\tau(\theta)}-A_{n,s})\frac{\vp}{|x|^{2s}}dx=\int_0^\infty r^{n-1-2s}\vp(r)\int_{\mathbb{S}^{n-1}}(e^{\tau(\theta)}-A_{n,s})d\theta dr,
		\end{equation*}
		which implies
		\begin{equation}
		\label{3.5}
		\int_{\mathbb{S}^{n-1}}e^{\tau(\theta)}d\theta=A_{n,s}|\mathbb{S}^{n-1}|.
		\end{equation}

		Now we shall use the stability  condition to derive a counterpart equation of \eqref{3.5}.  We fix a  radially symmetric smooth   cut-off function 
		\begin{equation*}
		\eta(x)=\begin{cases}
		1\quad  &\mathrm{for}~|x|\leq1,\\ \rule{0cm}{.5cm}
		0\quad  &\mathrm{for}~|x|\geq2,
		\end{cases}
		\end{equation*}
		and set	
		$$\eta_{\varepsilon}(x)=\left(1-\eta\left(\frac{2x}{\e}\right)\right)\eta(\e x).$$ 
		It is easy to see that $\eta_{\e}=1$ for $\e<r<\e^{-1}$ and $\eta_{\e}=0$ for either $r<\frac{\e}{2}$ or $r>\frac{2}{\e}$. We test the stability condition \eqref{1.stablecondition} on the function $\psi(x)=r^{-\frac{n-2s}{2}}\eta_{\e}(r)$. Let $|y|=rt$ and we notice that
		\begin{equation*}
		\begin{aligned}
		\int_{\B}\frac{\psi(x)-\psi(y)}{|x-y|^{n+2s}}dy
		&= r^{-\frac n2-s}\int_0^\infty\int_{\S}\frac{\eta_{\e}(r)-t^{-\frac{n-2s}{2}}\eta_{\e}(rt)}{(t^2+1-2t\langle \theta,\omega\rangle)^{\frac{n+2s}{2}}}t^{n-1}dtd\omega\\
		&= r^{-\frac n2-s}\eta_{\e}(r)\int_0^{\infty}\int_{\S}\frac{1-t^{-\frac{n-2s}{2}}}
		{(t^2+1-2t\langle\theta,\omega\rangle)^{\frac{n+2s}{2}}}t^{n-1}dtd\omega\\
		&\quad +r^{-\frac n2-s}\int_0^{\infty}\int_{\S}\frac{t^{n-1-\frac{n-2s}{2}}(\eta_{\e}(r)-\eta_{\e}(rt))}
		{(t^2+1-2t\langle\theta,\omega\rangle)^{\frac{n+2s}{2}}}dtd\omega.
		\end{aligned}
		\end{equation*}
		It is known that (see e.g.  \cite[Lemma 4.1]{fa})
		\begin{equation*}
		\Lambda_{n,s}=c_{n,s}\int_0^{\infty}\int_{\S}\frac{1-t^{-\frac{n-2s}{2}}}
		{(t^2+1-2t\langle\theta,\omega\rangle)^{\frac{n+2s}{2}}}t^{n-1}dtd\omega.
		\end{equation*}
		Therefore,
		\begin{equation*}
		\begin{aligned}
		c_{n,s}\int_{\B}\frac{\psi(x)-\psi(y)}{|x-y|^{n+2s}}dy
		=~&c_{n,s}r^{-\frac n2-s}\int_0^{\infty}\int_{\S}\frac{t^{n-1-\frac{n-2s}{2}}(\eta_{\e}(r)-\eta_{\e}(rt))}
		{(t^2+1-2t\langle\theta,\omega\rangle)^{\frac{n+2s}{2}}}dtd\omega\\
		&+\Lambda_{n,s}r^{-\frac n2-s}\eta_{\e}(r).
		\end{aligned}
		\end{equation*}
		Based on the above computations, we compute the left hand side of the stability inequality \eqref{1.stablecondition},
		\begin{equation}
		\label{3.8}
		\begin{aligned}
		&\frac{c_{n,s}}{2}\int_{\B}\int_{\B}\frac{(\psi(x)-\psi(y))^2}{|x-y|^{n+2s}}dxdy\\
		&=c_{n,s}\int_{\B}\int_{\B}\frac{(\psi(x)-\psi(y))\psi(x)}{|x-y|^{n+2s}}dxy\\
		&=\Lambda_{n,s}|\S|\int_0^\infty r^{-1}\eta_{\e}^2(r)dr\\
		&\quad+c_{n,s}\int_0^\infty\left[\int_0^\infty r^{-1}\eta_{\e}(r)(\eta_{\e}(r)-\eta_{\e}(rt))dr\right]\\
		&\quad\quad\times\int_{\S}\int_{\S}\frac{t^{n-1-\frac{n-2s}{2}}}
		{(t^2+1-2t\langle\theta,\omega\rangle)^{\frac{n+2s}{2}}}d\omega d\theta dt.
		\end{aligned}
		\end{equation}
		We compute the right hand side of  the stability inequality \eqref{1.stablecondition} for the test function $\psi(x)=r^{-\frac{n}{2}+s}\eta_{\e}(r)$ and $u(r)=-2s\log r+\tau(\theta)$,
		\begin{equation}
		\label{3.9}
		\begin{aligned}
		\int_{\B}e^u\psi^2=&\int_0^\infty\int_{\S}\eta_{\e}^2(r)r^{-2s}r^{-(n-2s)}e^{\tau(\theta)}r^{n-1}drd\theta\\
		=&\int_0^\infty r^{-1}\eta^2_{\e}(r)dr\int_{\S}e^{\tau(\theta)}d\theta.
		\end{aligned}
		\end{equation}
		From the definition of the function $\eta_{\e}$, we have
		\begin{equation*}
		\int_0^\infty r^{-1}\eta_{\e}^2(r)dr=\log\frac{2}{\e}+O(1).
		\end{equation*}
		One can see that both the first term on the right hand side of \eqref{3.8} and the right hand side of \eqref{3.9} carry the term $\int_0^\infty r^{-1}\eta_{\e}^2(r)dr$ and it tends to $\infty$ as $\e\to0$. Next we claim that
		\begin{equation}
		\label{3.10}
		f_{\e}(t):=\int_0^\infty r^{-1}\eta_{\e}(r)(\eta_{\e}(r)-\eta_{\e}(rt))dr =O(\log t).
		\end{equation}
		From the definition of $\eta_{\e},$ we have
		\begin{equation*}
		f_{\e}(t)=\int_{\frac{\e}{2}}^{\frac{2}{\e}}r^{-1}\eta_{\e}(r)(\eta_{\e}(r)-\eta_{\e}(rt))dr.
		\end{equation*}
		Notice that
		\begin{equation*}
		\eta_{\e}(rt)=\begin{cases}
		1, \quad &\mathrm{for}~\frac{\e}{t}<r<\frac{1}{t\e},\\
		\\
		0, \quad &\mathrm{for~either}~r<\frac{\e}{2t}~\mathrm{or}~r>\frac{2}{t\e}.
		\end{cases}
		\end{equation*}
		Now we consider various ranges of value of $t\in(0,\infty)$ to establish the claim \eqref{3.10}.
		\begin{equation*}
		f_{\e}(t)\approx\begin{cases}
		-\int_{\frac{\e}{2}}^{\frac{2}{t\e}}r^{-1}dr+\int_{\frac{\e}{2}}^{\frac{2}{\e}}r^{-1}dr\approx \log\e=O(\log t),\quad~&\mathrm{if}~\frac{1}{t\e}<\e,\\
		\\
		-\int_{\frac{\e}{2}}^{\e}r^{-1}dr+\int_{\frac{1}{\e t}}^{\frac{2}{\e}}r^{-1}dr\approx \log t, \quad &\mathrm{if}~\frac{\e}{t}<\e<\frac{1}{\e t},\\
		\\
		\int_{\frac{\e}{2}}^{\frac{\e}{t}}r^{-1}dr-\int_{\frac{1}{\e}}^{\frac{2}{\e }}r^{-1}dr\approx \log t,\quad & \mathrm{if}~\e<\frac{\e}{t}<\frac{1}{\e},\\
		\\
		\int_{\frac{\e}{2}}^{\frac{2}{\e}}r^{-1}dr-\int_{\frac{\e}{2t}}^{\frac{2\e}{t}}r^{-1}dr\approx \log\e=O(\log t),
		\quad &\mathrm{if}~\frac{1}{\e}<\frac{\e}{t}.
		\end{cases}
		\end{equation*}
		The other cases can be treated similarly. From this one can see that
		\begin{equation*}
		\begin{aligned}
		&\int_0^{\infty}\left[\int_0^\infty r^{-1}\eta_{\e}(r)(\eta_{\e}(r)-\eta_{\e}(rt))\right]\int_{\S}\int_{\S}
		\dfrac{t^{n-1-\frac{n-2s}{2}}}
		{(t^2+1-2t\langle\theta,\omega\rangle)^{\frac{n+2s}{2}}}d\omega d\theta dt\\
		&\approx \int_0^\infty\int_{\S}\int_{\S}\dfrac{t^{n-1-\frac{n-2s}{2}}\log t}
		{(t^2+1-2t\langle\theta,\omega\rangle)^{\frac{n+2s}{2}}}d\omega d\theta dt\\
		& = O(1)
		\end{aligned}
		\end{equation*}
		Collecting the higher order term $(\log\e)$, we get
		\begin{equation}
		\label{3.12}
		\Lambda_{n,s}|\S|\geq \int_{\S}e^{\tau(\theta)}d\theta.
		\end{equation}
		From \eqref{3.5} and \eqref{3.12}, we obtain that
		\begin{equation*}
		\Lambda_{n,s}\geq A_{n,s},
		\end{equation*}
		which contradicts to the assumption \eqref{3.1}. Therefore, such homogeneous solution does not exist and we finish the proof.
	\end{proof}

	\subsection{Monotonicity formula}
	In this subsection  we  give a proof of Theorem \ref{th4.2}, and estimate the   terms appearing in \eqref{1.monotonicity}.
	
	\begin{proof}[Proof of Theorem \ref{th4.2}.]
We prove the theorem  for $u$  sufficiently smooth, and give the necessary details for the general case in  Remark \ref{re1.1} below.  Without loss of generality, we may assume that $x_0=0$ and the balls $B_\lambda^{n+1}$ are centered at $0$. Set,
		\begin{equation*}
		E_1(\ou,\lambda)=\lambda^{2s-n}\left(\int_{B^{n+1}_\lambda\cap\R}t^{1-2s}\frac{|\nabla\ou|^2}{2}dxdt-\kappa_s\int_{B_{\lambda}^{n+1}\cap\partial\R}e^{\ou}dx\right).
		\end{equation*}
		Define
		\begin{equation*}
		\ou^\lambda (X)=\ou(\lambda X)+2s\log\lambda.
		\end{equation*}
		Then
		\begin{equation}
		\label{3.relation}
		E_1(\ou,\lambda)=E_1(\ou^{\lambda},1).
		\end{equation}
		Differentiating $\ou^\l$ with respect to $\lambda $, we have
		\begin{equation*}
		\lambda\partial_{\lambda}\ou^{\lambda}=r\partial_r \ou^{\lambda}+2s.
		\end{equation*}
		Differentiating the right hand side of \eqref{3.relation}, we find
		\begin{equation}
		\label{3.diff}
		\begin{aligned}
		\frac{d E_1}{d\lambda}(\ou,\lambda)=~&\int_{B^{n+1}_1\cap\R}t^{1-2s}\nabla \ou^{\lambda}\nabla \partial_{\lambda}\ou^{\lambda}dxdt
		-\kappa_s\int_{B_1^{n+1}\cap\partial\R}e^{\ou^{\lambda}}\partial_{\lambda}\ou^{\lambda}dx\\
		=~&\int_{\partial B_1^{n+1}\cap\R}t^{1-2s}\partial_r\ou^\lambda \partial_\lambda \ou^\lambda d\sigma\\
		=~&\lambda \int_{\partial B_1^{n+1}\cap\R}t^{1-2s}(\partial_\lambda \ou^\lambda)^2d\sigma
		-2s\int_{\partial B_1^{n+1}\cap\R}t^{1-2s}\partial_\lambda \ou^\lambda d\sigma.
		\end{aligned}
		\end{equation}
		We notice that
		\begin{equation}
		\label{3.m2}
		E(\ou,0,\lambda)=E(\ou^\lambda,0,1)=E_1(\ou^\lambda,1)+2s\int_{\partial B_1^{n+1}\cap\R}t^{1-2s}\ou^\lambda d\sigma.
		\end{equation}
		From \eqref{3.diff} and \eqref{3.m2}, we get
		\begin{equation*}
		\begin{aligned}
		\frac{d E}{d\lambda}(\ou,0,\lambda)=~&\lambda \int_{\partial B_1^{n+1}\cap\R}t^{1-2s}(\partial_\lambda \ou^\lambda)^2d\sigma\\
		=~&\lambda^{2s-n}\int_{\partial B_\lambda^{n+1}\cap\R}t^{1-2s}(\partial_r\ou+\frac{2s}{r})^2d\sigma.
		\end{aligned}
		\end{equation*}
		Hence, we finish the proof.
	\end{proof}

	\begin{remark}
	\label{re1.1} 
Setting     $u^\lambda_{\e}:=u^\lambda*\rho_{\e}$   ($(\rho_{\e})_{\e>0}$ are the standard mollifiers) we  see that $u^\lambda_{\e}$ satisfies $\s u^\lambda_{\e}=e^{u^\lambda_\e}*\rho_{\e}$. Then we consider $E(\ou^\lambda_{\e},0,1)$. Following the same computations as before, we could get
    \begin{equation*}
    \frac{d}{d\lambda}E(\ou^\lambda_{\e},0,1)=\l\int_{\partial B^{n+1}_1\cap\R}t^{1-2s}\left(\partial_\l\ou^\l_\e\right)^2d\sigma+\kappa_s\int_{B_1}(e^{u^\l}*\rho_{\e}-e^{u_\e^\l})\partial_\l u_\e^\l dx,
    \end{equation*}
    where the second term on the right hand side could be controlled by assuming $u\in W^{1,2}_{\mathrm{loc}}(\B)$ and $e^{u}\in L^2_{\mathrm{loc}}(\B)$, and it converges to zero as $\e$ tends  to $0$. 
    
    We could also   assume  $u\in W^{1,q}_{\mathrm{loc}}(\B)$ with $q$ slightly bigger than $5/4$. In sacrifice of the less regularity assumption, we have to consider the monotonicity formula by truncating the region where $u$ is unstable, i.e., 
    \begin{equation*}
    \begin{aligned}
    E_R(\ou^\l,0,1)=~&\left(\frac12\int_{(B_1^{n+1}\setminus B_{2R/\l}^{n+1})\cap \R}t^{1-2s}|\nabla\ou^\l|^2dxdt-\kappa_s\int_{B_1\setminus B_{2R/\l}}e^{\ou^\l}dx\right)\\
    &+2s\int_{\partial (B^{n+1}_1\setminus B^{n+1}_{2R/\l})\cap\R}t^{1-2s}\ou^\l d\sigma,
    \end{aligned}
    \end{equation*}
	where $R$ is chosen such that $u$ is stable in $\B\setminus B_R(0).$ For $E_{R}(\ou^\l,0,1)$, after  some   computations we could show that
\begin{equation}\label{4.10}
\frac{d}{d\lambda}E_R(\ou^\lambda,0,1)=\l\int_{\partial B^{n+1}_1\cap\R}t^{1-2s}\left(\partial_\l\ou^\l\right)^2d\sigma+o(\l^{-1-\delta}),~\ \mathrm{as}~\ \l\to\infty,
\end{equation} 
for some  $\delta>0$.  In this case we are not claiming that $E_R(\ou^\l,0,1) $ is monotone increasing with respect to $\lambda$. However, as $\delta>0$,  the above estimate is good enough for our purposes. To be precise, it will be used to show that the constant $c_\l$ has a uniform lower bound for $\l\geq 1$  (see Proposition \ref{prf.1}), and that the limit function $\ou^\infty$ is homogeneous (see subsection \ref{section5}).  
\end{remark}

	By Lemma \ref{f.1} we derive the following expression for the third term in the monotonicity formula \eqref{1.monotonicity}.
	\begin{lemma}
		\label{lef.2}
		Let $\ou^\l$ be the $s$-harmonic extension of $u^\l$, then
		\begin{equation*}
		\int_{\partial B_1^{n+1}\cap\R}t^{1-2s}\ou^\l(X)d\sigma=c_sc_\l+O(1),
		\end{equation*}
		where $c_\l$ is defined in \eqref{f.11} and $c_s$ is a positive finite number given by
		$$c_s:=\int_{\partial B_1^{n+1}\cap\R}t^{1-2s}d\sigma.$$
	\end{lemma}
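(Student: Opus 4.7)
The idea is to reduce the claim to an estimate on the $s$-harmonic extension of $v^\lambda$, exploit an explicit Green-type representation of that extension, and then bound the resulting weighted spherical integral via a scaling argument. Denote by $\overline{v}^\lambda$ the $s$-harmonic extension of $v^\lambda$. Since $u^\lambda = v^\lambda + c_\lambda$ by Lemma \ref{u-representation} and the Poisson extension is linear and maps constants to constants, $\ou^\lambda = \overline{v}^\lambda + c_\lambda$ on $\R$; hence
\begin{equation*}
\int_{\partial B_1^{n+1}\cap\R} t^{1-2s}\,\ou^\lambda\,d\sigma \;=\; c_s\,c_\lambda \;+\; \int_{\partial B_1^{n+1}\cap\R} t^{1-2s}\,\overline{v}^\lambda\,d\sigma,
\end{equation*}
so it suffices to prove that the second term is $O(1)$ uniformly in $\lambda\geq 1$.

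A direct computation of $\nabla\cdot\bigl(t^{1-2s}\nabla|X|^{-(n-2s)}\bigr)$ shows it vanishes on $\R\setminus\{0\}$; by translation, $X\mapsto|X-(z,0)|^{-(n-2s)}$ is $s$-harmonic on $\R\setminus\{(z,0)\}$ with boundary trace $|x-z|^{-(n-2s)}$ for every $z\in\B$. Combined with the uniform growth bound on $v^\lambda$ from Lemma \ref{lef.1} and uniqueness of the Poisson extension in $L_s(\B)$, this yields
\begin{equation*}
\overline{v}^\lambda(X) \;=\; c(n,s)\int_\B\!\Bigl(\tfrac{1}{|X-(z,0)|^{n-2s}} - \tfrac{1}{(1+|z|)^{n-2s}}\Bigr) e^{u^\lambda(z)}\,dz.
\end{equation*}
Fubini rewrites $\int_{\partial B_1^{n+1}\cap\R}t^{1-2s}\overline{v}^\lambda d\sigma = c(n,s)\int_\B M(z)\,e^{u^\lambda(z)}dz$, where $M(z)$ is the spherical integral of the weighted kernel above. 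The task is therefore reduced to a pointwise estimate on $M$ that can be integrated against $e^{u^\lambda}$.

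For $|z|\geq 2$, the cancellation between the two summands in the kernel, together with the bounds $\bigl||X-(z,0)|-(1+|z|)\bigr|\leq C$ and $|X-(z,0)|,(1+|z|)\sim|z|$, yields $|M(z)|\leq C|z|^{-(n-2s+1)}$, so Lemma \ref{leh.3} handles that regime. On $B_2$, the only potential singularity of the kernel occurs at $(z,0)\in\partial B_1^{n+1}$, i.e.\ when $|z|=1$. A local parametrization of $\partial B_1^{n+1}\cap\R$ near the equator point $(z/|z|,0)$ reduces the delicate part of $M(z)$ to an integral of the form $\int\tau^{1-2s}(|\xi|^2+\tau^2+\epsilon^2)^{-(n-2s)/2}d\xi\,d\tau$ over $\{\tau\geq 0,\ |\xi|,\tau\lesssim 1\}$, where $\epsilon:=\bigl||z|-1\bigr|$. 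The rescaling $\xi=\epsilon\xi',\ \tau=\epsilon\tau'$ exhibits an exact cancellation of homogeneity degrees: the integral becomes $\epsilon\cdot\int_{|\xi'|,\tau'\lesssim 1/\epsilon}\tau'^{1-2s}(|\xi'|^2+\tau'^2+1)^{-(n-2s)/2}d\xi'd\tau' = \epsilon\cdot O(1/\epsilon) = O(1)$. Hence $|M(z)|\leq C$ uniformly on $B_2$, and Lemma \ref{leh.1} (applied at $r=3$) gives $\int_{|z|\leq 2}|M(z)|e^{u^\lambda(z)}dz\leq C$, completing the proof.

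The main obstacle is precisely this uniform bound on $M$ near the equator: the clean outcome depends on the exact matching between the Riesz kernel exponent $n-2s$ and the weight $t^{1-2s}$, which collapses the otherwise singular contribution to $O(1)$ uniformly for all $s\in(0,1)$ and avoids having to invoke any H\"older interpolation against the available integrability of $e^{u^\lambda}$.
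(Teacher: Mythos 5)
Your proposal is correct, and it reaches the same starting point as the paper (writing $\ou^\lambda=\overline{v}^\lambda+c_\lambda$ via Lemma \ref{u-representation} and linearity of the Poisson extension), but it then takes a genuinely different route to estimating the $\overline{v}^\lambda$ term. The paper keeps the Poisson extension of $v^\lambda$, splits the $z$-integral at $|z|=2$, controls the far part by \eqref{f.4}, and for the near part first bounds the inner integral $\int_{B_2}t^{1-2s}P(X,z)|z-y|^{-(n-2s)}dz$ pointwise (its display \eqref{f.15}) and then integrates over the hemisphere via the stereographic projection $(x,t)\mapsto x/(1-t)$. You instead invoke the exact identity $\int_{\B}P(X,y)|y-z|^{-(n-2s)}dy=|X-(z,0)|^{-(n-2s)}$ (which follows, as you note, from the direct computation that $|X|^{-(n-2s)}$ is $t^{1-2s}$-degenerate-harmonic and is the unique decaying extension of the Riesz kernel), thereby collapsing the double Poisson integral into a single $(n+1)$-dimensional Riesz-type kernel. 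The resulting spherical integral $M(z)$ is then bounded by a clean scaling argument near the equator, where you correctly identify the homogeneity balance $\epsilon^{\,n+(1-2s)-(n-2s)}=\epsilon$ against the $O(1/\epsilon)$ growth of the rescaled integral; note this is the same degree-matching that underlies the paper's stereographic estimates \eqref{f.15}--\eqref{f.17}, just made visible in a different set of coordinates. Your version buys a shorter path through the core estimate at the cost of needing to justify the extension identity (uniqueness in $L_s(\B)$, Fubini to pass the Poisson integral inside the $z$-integral) and a careful local parametrization of the hemisphere near the equator; the paper's is more pedestrian but stays entirely within the Poisson-kernel framework already set up in Section 2. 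Both rely on the same inputs from the rest of the paper (Lemma \ref{u-representation}, \eqref{f.1}, \eqref{f.4}, Lemma \ref{leh.3}), and both are correct.
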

	
	\begin{proof}
		Using the Poisson formula we have
		\begin{equation*}
		\ou^\l(X)=\int_{\B}P(X,z)u^\l(z)dz=c_\l+\int_{\B}P(X,z)v^\l(z)dz.
		\end{equation*}
		It follows from \eqref{f.4} that
		\begin{equation*}
		\int_{\B\setminus B_2}P(X,z)|v^\l(z)|dz\leq C\quad\mbox{for}\quad |X|\leq 1.
		\end{equation*}
		Therefore
		\begin{align*}
		\int_{\partial B^{n+1}_1\cap\R}t^{1-2s}\ou^\l(X)d\sigma
		=c_sc_\l+O(1)+\int_{\partial B_1^{n+1}\cap\R}t^{1-2s}\int_{B_2}P(X,z)v^\l(z)dzd\sigma.
		\end{align*}
		We denote the last term in the above equation by $II$. To estimate the term $II$, we claim that
		\begin{equation}
		\label{f.14}
		\int_{\partial B^{n+1}_1\cap\R}\int_{B_4}t^{1-2s}P(X,z)\frac{1}{|y-z|^{n-2s}}dzd\sigma\leq C\quad \mbox{for every}~y\in\B.
		\end{equation}
		Indeed, for $x\neq y$ we set $r=\frac12|x-y|$. Then we have
		\begin{equation}
		\label{f.15}
		\begin{aligned}
		&\int_{B_4}\frac{t}{|(x-z,t)|^{n+2s}|y-z|^{n-2s}}dz\\
		&\leq\left(\int_{B(y,r)}+\int_{B_4\setminus  B(y,r)}\right)\frac{t}{|(x-z,t)|^{n+2s}|y-z|^{n-2s}}dz\\
		&\leq C\frac{t}{(r+t)^{n+2s}}
		\int_{B(y,r)}\frac{1}{|y-z|^{n-2s}}dz
		+\frac{1}{r^{n-2s}}\int_{B_4\setminus  B(y,r)}\frac{t}{|(x-z,t)|^{n+2s}}dz\\
		&\leq C\left(\frac{tr^{2s}}{(r+t)^{n+2s}}
		+\frac{t^{1-2s}}{r^{n-2s}}\right)\leq C\left(\frac{1}{r^{n-1}}+\frac{t^{1-2s}}{r^{n-2s}}\right).
		\end{aligned}
		\end{equation}
We use the stereo-graphic projection $(x,t)\to\xi$ from $\partial B_1^{n+1}\cap\R\to\mathbb{R}^n\setminus B_1$, i.e.,
		$$(x,t)\to\xi=\frac{x}{1-t}.$$
		Then $r=\frac12\left|\frac{2\xi}{1+|\xi|^2}-y\right|$ and it follows that
		\begin{equation*}
		\int_{\partial B_1^{n+1}\cap\R}\frac{1}{r^{n-1}}d\sigma
		\leq\int_{|\xi|\geq 1}\frac{1}{r^{n-1}}\frac{1}{(1+|\xi|^2)^n}d\xi\leq C,
		\end{equation*}
		and
		\begin{equation}
		\label{f.17}
		\begin{aligned}
		\int_{\partial B_1^{n+1}\cap\R}\frac{t^{1-2s}}{r^{n-2s}}d\sigma
		\leq \int_{|\xi|\geq1}\frac{t^{1-2s}}{r^{n-2s}}
		\frac{1}{(1+|\xi|^2)^n}d\xi\leq C.
		\end{aligned}
		\end{equation}
		From \eqref{f.15}-\eqref{f.17}, we proved \eqref{f.14}. As a consequence, we have
		\begin{equation*}
		\begin{aligned}
		|II|&\leq C+C\int_{\partial B^{n+1}_1\cap\R}t^{1-2s}\int_{|z|\leq 2}P(X,z)\int_{|y|\leq 4}\frac{e^{u^\l(y)}}{|z-y|^{n-2s}}dydzd\sigma\\
		&\leq C+C\int_{|y|\leq 4}e^{u^\l(y)}\int_{\partial B_{1}^{n+1}\cap\R}\int_{|z|\leq 2}t^{1-2s}P(X,z)\frac{1}{|z-y|^{n-2s}}dzd\sigma dy\\
		&\leq C+C\int_{|y|\leq 4}e^{u^\l(y)}dy\leq C,
		\end{aligned}
		\end{equation*}
		where we used \eqref{f.1} and \eqref{f.5}. Hence we finish the proof.
	\end{proof}
	
To estimate the first term in the monotonicity formula \eqref{1.monotonicity}  we need the following result:
	
	\begin{lemma}
		\label{lef.3}
		We have
		$$\int_{B_r}|(-\D)^\frac s2u^\lambda(x)|^2dx\leq Cr^{n-2s}\quad\text{for~ }r\geq1,~\ \lambda\geq1.$$
	\end{lemma}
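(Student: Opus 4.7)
The strategy is to realize $(-\Delta)^{s/2} u^\lambda$ as a Riesz potential of $e^{u^\lambda}$ and then split the integral at the scale $2r$ into near and far parts. Starting from the representation in Lemma \ref{u-representation}, the additive constant $c_\lambda$ and the $x$-independent term $(1+|y|)^{-(n-2s)}$ are both killed by $(-\Delta)^{s/2}_x$; applying the pointwise identity $(-\Delta)^{s/2}_x|x-y|^{-(n-2s)}=c_1|x-y|^{-(n-s)}$ and interchanging the operator with the integral yields
\[
(-\Delta)^{s/2}u^\lambda(x) \;=\; c'_{n,s}\int_{\mathbb{R}^n}\frac{e^{u^\lambda(y)}}{|x-y|^{n-s}}\,dy \;=:\; T_1(x)+T_2(x),
\]
where $T_1,T_2$ integrate over $B_{2r}$ and its complement respectively.

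For the far part, $|x-y|\geq|y|/2$ when $x\in B_r$ and $|y|\geq 2r$; a dyadic decomposition of $\{|y|\geq 2r\}$ together with the uniform decay $\int_{B_R}e^{u^\lambda}\,dx\leq CR^{n-2s}$ from Corollary \ref{cor2.2} gives the pointwise bound $|T_2(x)|\leq C\sum_{i\geq 1}(2^i r)^{s-n}(2^{i+1}r)^{n-2s}\leq Cr^{-s}$, whence $\int_{B_r}T_2^2\,dx\leq Cr^{n-2s}$ uniformly in $\lambda\geq 1$. For the near part, $T_1 = I_s g$ with $g=e^{u^\lambda}\chi_{B_{2r}}$, and the Hardy-Littlewood-Sobolev inequality with exponents $(p,2)$, $p=\tfrac{2n}{n+2s}$, gives $\|T_1\|_{L^2(\mathbb{R}^n)}^2\leq C\bigl(\int_{B_{2r}}e^{pu^\lambda}\bigr)^{2/p}$. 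The assumption $n>2s$ places $p$ strictly between $1$ and $\min(5,n/(2s))$, so Proposition \ref{prop-2.6} applied to $u^\lambda$ yields $\int_{B_{2r}}e^{pu^\lambda}\,dx\leq Cr^{n-2ps}$; plugging in, the exponent $2n/p-4s$ collapses exactly to $n-2s$ by the choice of $p$.

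The principal subtlety is the uniformity of the constants in $\lambda$. By Corollary \ref{cor2.2} the base $L^1$ decay holds uniformly for $\lambda\geq 1$, so the iterative argument in the proof of Proposition \ref{prop-2.6} produces $\lambda$-independent constants for all $r$ beyond a fixed multiple of the stability radius $R$. For the remaining range $1\leq r\leq 3R$, one rescales back to $u$: writing
\[
\int_{B_{2r}}e^{pu^\lambda}\,dx = \lambda^{2ps-n}\int_{B_{2\lambda r}}e^{pu}\,dy \leq C\lambda^{2ps-n}\bigl(1+(\lambda r)^{n-2ps}\bigr) \leq Cr^{n-2ps},
\]
where we used the hypothesis $e^u\in L^2_{\mathrm{loc}}$ (hence $e^u\in L^p_{\mathrm{loc}}$ since $p<2$) on the ball $B_{3R}$ together with Proposition \ref{prop-2.6} for $u$ on the exterior, and $\lambda^{2ps-n}\leq 1$ since $n>2ps$. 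Combining the near and far estimates completes the proof.
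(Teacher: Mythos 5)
Your proof is correct, and it takes a genuinely different route from the paper's. Both proofs start from the same representation $(-\Delta)^{s/2}u^\lambda=c'_{n,s}\int e^{u^\lambda(y)}|x-y|^{s-n}\,dy$ and both handle the tail $\{|y|\geq 2r\}$ by a dyadic decomposition using the uniform $L^1$ decay from Corollary \ref{cor2.2}. The divergence is in the treatment of the near part. The paper rescales to $\lambda=1$, disposes of a small ball $B_{4R}$ using $u\in\dot H^s_{\mathrm{loc}}$, and then, on the annulus $B_r\setminus B_{4R}$, splits the middle contribution into two cases: for $4s\geq n>2s$ it invokes the pointwise bound $u(x)\leq -2s\log|x|+C$ from Lemma \ref{lemma-3.2} (valid since $n<10s$ there), and for $n>4s$ it applies Cauchy--Schwarz with respect to the measure $|x-y|^{s-n}dy$ together with Proposition \ref{prop-2.6} at exponent $2$. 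You instead hit the near part $T_1=I_s(e^{u^\lambda}\chi_{B_{2r}})$ directly with Hardy--Littlewood--Sobolev at the critical pair $(p,2)$ with $p=2n/(n+2s)$; the arithmetic $2n/p-4s=n-2s$ and the observation that $p<\min\{2,5,n/(2s)\}$ whenever $n>2s$ then make Proposition \ref{prop-2.6}(ii) applicable in a single stroke. This removes the case split on $n$ versus $4s$ and the dependence on Lemma \ref{lemma-3.2} entirely, and is arguably cleaner. Your handling of the region where stability fails (rescaling $\int_{B_{2r}}e^{pu^\lambda}=\lambda^{2ps-n}\int_{B_{2\lambda r}}e^{pu}$, using $e^u\in L^2_{\mathrm{loc}}\subset L^p_{\mathrm{loc}}$ on $B_{2R}$ and $\lambda^{2ps-n}\leq 1$) replaces the paper's use of $\dot H^s_{\mathrm{loc}}$ on $B_{4R}$, which is the one place the two arguments lean on slightly different parts of the standing hypotheses, but both are available in the setting of Theorem \ref{th1.1}.
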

	\begin{proof}
		By a scaling argument,  it suffices to prove the lemma for $\lambda=1$. It follows from \eqref{f.11} that
		$$(-\D)^\frac s2u(x)=C\int_{\B}\frac{1}{|x-y|^{n-s}}e^{u(y)}dy,$$
		in the sense of distribution. Let $R>0$ be such that $u$ is stable outside $B_R$. For $r\gg R$, we decompose $B_{r}=B_{4R}\cup (B_{r}\setminus B_{4R}).$  Since  $u\in\dot H_{\mathrm{loc}}^s(\B)$,  we get
		\begin{equation}
		\label{4.4.0}
		\int_{B_{4R}}|(-\D)^\frac{s}{2} u|^2dx\leq C(R).
		\end{equation}
		While for $4R<|x|<r$ we estimate
		\begin{equation}
		\label{4.4.1}
		\begin{aligned}
		|(-\D)^\frac s2u(x)| &\leq C\left(\int_{B_{2R}}    +\int_{B_{2r}\setminus B_{2R}}+\int_{\B\setminus B_{2r}}\right)
		\frac{1}{|x-y|^{n-s}}e^{u(y)}dy\\
		&\leq  \frac{C}{|x|^{n-s}}+C\int_{B_{2r}\setminus B_{2R}}\frac{1}{|x-y|^{n-s}}e^{u(y)}dy+ C\int_{\B\setminus B_{2r}}
		\frac{1}{|y|^{n-s}}e^{u(y)}dy \\
		&=:C\left(\frac{1}{|x|^{n-s}}+I_1(x)+I_2 \right) .
		\end{aligned}
		\end{equation}
		Using \eqref{h.7} we bound
		\begin{align*}
		I_2=\sum_{k=0}^\infty \int_{r2^k\leq|x|\leq r2^{k+1}}\frac{e^{u(y)}}{|y|^{n-s}}dy  \leq
		C\sum_{k=0}^\infty \frac{(r2^{k+1})^{n-2s}}{(2^kr)^{n-s}}\leq \frac{C}{r^{s}}.
		\end{align*}
		Therefore,
		\begin{equation}
		\label{4.4.2}
		\int_{B_r}I_2^2dx\leq Cr^{n-2s}.
		\end{equation}
		For the second term $I_1(x)$, if $4s\geq n>2s$, by Lemma \ref{lemma-3.2} we have
		\begin{equation*}
		I_1(x)\leq C\int_{2R\leq|y|\leq 2r}\frac{1}{|x-y|^{n-s}|y|^{2s}}dy\leq \frac{C}{|x|^s},
		\end{equation*}
		and hence
		\begin{equation}
		\label{4.4.3}
		\int_{B_r\setminus B_{4R}}I_1^2(x)dx\leq Cr^s\int_{B_r\setminus B_{4R}}\int_{B_{2r}\setminus B_{2R}}\frac{1}{|x-y|^{n-s}}e^{2u(y)}dydx\leq Cr^{n-2s}.
		\end{equation}
		If $n>4s$,  by H\"older inequality with respect to the measure $\frac{dy}{|x-y|^{n-s}}$  we get
		\begin{align*}
		\left(\int_{B_{2r}\setminus B_{2R}}\frac{e^{u(y)}}{|x-y|^{n-s}}dy\right)^2
		\leq~&\int_{B_{2r}}\frac{dy}{|x-y|^{n-s}}\left(\int_{B_{2r}\setminus B_{2R}}\frac{e^{2u(y)}}{|x-y|^{n-s}}dy\right)\\
		\leq~& Cr^s\int_{B_{2r}\setminus B_{2R}}\frac{e^{2u(y)}}{|x-y|^{n-s}}dy.
		\end{align*}
		Hence by Proposition \ref{prop-2.6}, we get  that \eqref{4.4.3} also holds for $n>4s$.
Combining  \eqref{4.4.1}, \eqref{4.4.2} and  \eqref{4.4.3}, we deduce 
		\begin{equation}
		\label{4.4.4}
		\int_{B_r\setminus B_{4R}}|(-\D)^\frac{s}{2} u|^2dx\leq Cr^{n-2s}.
		\end{equation}
		Then the lemma follows from \eqref{4.4.0}, \eqref{4.4.4} and $n>2s.$
	\end{proof}

	We use Lemma \ref{lef.3} to prove: 
	\begin{lemma} We have 
		\label{lef.4}
		\begin{equation}
		\label{f.24}
		\int_{B_r^{n+1}\cap \R}t^{1-2s}|\nabla \ou^\l(X)|^2dxdt\leq C(r),\quad \forall r>0,\,\l\geq1.
		\end{equation}
	\end{lemma}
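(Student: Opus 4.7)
The plan is to derive the estimate by testing the weak formulation \eqref{1.weak} of the extension problem against a localized test function built from $\ou^\lambda$ itself, and then to control the resulting terms via the Riesz-potential representation of Lemma \ref{u-representation} combined with the uniform integrability results of Section~2 and Proposition \ref{prop-2.6}.

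First, I set $w^\lambda := \ou^\lambda - c_\lambda$, which is the $s$-harmonic extension of $v^\lambda$ and satisfies $\nabla w^\lambda = \nabla \ou^\lambda$. Fixing a cutoff $\eta \in C_c^\infty(\overline{\R})$ with $\eta\equiv 1$ on $B_r^{n+1}$, $\mathrm{supp}(\eta) \subset B_{2r}^{n+1}$ and $|\nabla\eta|\le C/r$, I insert $\Phi = \eta^2 w^\lambda$ into \eqref{1.weak}; expanding $\nabla(\eta^2 w^\lambda)$ and absorbing the cross term through Cauchy--Schwarz yields
\begin{equation*}
\int_{B_r^{n+1}\cap\R} t^{1-2s}|\nabla w^\lambda|^2\,dxdt \le \frac{C}{r^2}\int_{B_{2r}^{n+1}\cap\R}t^{1-2s}|w^\lambda|^2\,dxdt + C\left|\int_{B_{2r}}e^{u^\lambda}v^\lambda\,dx\right|.
\end{equation*}
The constant $c_\lambda$ never appears, so no hypothesis on its size is required.

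For the boundary term, I invoke the localized representation \eqref{f.5}: for $|x|\le 2r$, $|v^\lambda(x)| \le C_r + C\int_{B_{4r}}|x-y|^{-(n-2s)}e^{u^\lambda(y)}\,dy$. Hence that term is bounded by $C_r\int_{B_{2r}}e^{u^\lambda} + C\iint_{B_{2r}\times B_{4r}}|x-y|^{-(n-2s)}e^{u^\lambda(x)}e^{u^\lambda(y)}\,dxdy$. The first piece is controlled by Corollary \ref{cor2.2}, while the double integral is handled by Hardy--Littlewood--Sobolev combined with the uniform-in-$\lambda$ $L^p$-integrability of $e^{u^\lambda}$ supplied by Proposition \ref{prop-2.6} away from the origin and by the hypothesis $e^u \in L^2_{\mathrm{loc}}(\B)$ near the origin, the latter being transferred to $u^\lambda$ through the scaling identity $\int_{B_\rho}e^{pu^\lambda}\,dx = \lambda^{2ps-n}\int_{B_{\lambda\rho}}e^{pu}\,dy$.

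For the bulk term, I write $w^\lambda(X) = \int P(X,y)v^\lambda(y)\,dy$ and split the integrand at $|y| = 4r$. The tail $|y|\ge 4r$ is bounded pointwise on $B_{2r}^{n+1}\cap\R$ by the uniform $L_s$-bound \eqref{f.4}, giving $C(r)$. The near contribution, after Fubini and the $L^2$-contractivity $\|\bar g(\cdot,t)\|_{L^2(\RR^n)}\le \|g\|_{L^2(\RR^n)}$ of the Poisson semigroup, reduces to $\|v^\lambda\|_{L^2(B_{4r})}^2$, which by the Riesz-potential representation of $v^\lambda$ and Hardy--Littlewood--Sobolev is controlled by $\|e^{u^\lambda}\|_{L^p(B_{8r})}$ with $p = \frac{2n}{n+4s}\in[1,2)$. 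The principal obstacle is securing this last bound uniformly in $\lambda$: since $v^\lambda$ is a Riesz potential of the $\lambda$-dependent density $e^{u^\lambda}$ whose behavior inside the compact exceptional set $B_R$ (where stability fails) is qualitatively different from that in the stability region, one must split the $y$-integration into these two regions and combine the Farina-type decay from Proposition \ref{prop-2.6} in the stability region with the $L^2_{\mathrm{loc}}$ hypothesis on $e^u$ in $B_R$, the scaling identity absorbing the dangerous $\lambda$-powers.
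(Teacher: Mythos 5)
Your route is genuinely different from the paper's. The paper's own proof splits $u^\lambda=u_1^\lambda+u_2^\lambda$ with $u_1^\lambda$ the Riesz potential of the localized density $\varphi\,e^{u^\lambda}$ (plus $c_\lambda$) and $u_2^\lambda$ the far-field remainder; it controls the near part through the global identity $\int_{\R}t^{1-2s}|\nabla\ou_1^\lambda|^2\,dxdt=\kappa_s\|\ss u_1^\lambda\|_{L^2(\B)}^2$, which reduces to the $\dot H^{s/2}$ estimate of Lemma~\ref{lef.3}, and bounds $\nabla\ou_2^\lambda$ by differentiating the Poisson kernel directly, using the local $L^\infty$ bounds \eqref{f.27}--\eqref{f.28} on $u_2^\lambda$ and $\nabla u_2^\lambda$. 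You instead run a Caccioppoli argument on the extension, testing \eqref{1.weak} against $\eta^2 w^\lambda$ with $w^\lambda=\ou^\lambda-c_\lambda$, and reduce everything to uniform-in-$\lambda$ $L^p$ bounds on $e^{u^\lambda}$ via \eqref{f.5}, the Poisson semigroup's $L^2$-contractivity, and Hardy--Littlewood--Sobolev. Your observation that working with $w^\lambda$ removes any dependence on the size of $c_\lambda$ is the right structural move; the paper arranges the same thing by tucking $c_\lambda$ into $u_1^\lambda$, where it disappears under $\nabla$ and under $\ss$. Both proofs ultimately rest on the same inputs: the representation of Lemma~\ref{u-representation}, the decay from Proposition~\ref{prop-2.6} away from $B_R$, and the hypothesis $e^u\in L^2_{\mathrm{loc}}(\B)$ near the origin, with the scaling identity cancelling the $\lambda$-powers.

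There is, however, a real gap in the exponent you use for the bulk term. You claim $\|v^\lambda\|_{L^2(B_{4r})}\lesssim\|e^{u^\lambda}\|_{L^p(B_{8r})}$ with $p=\tfrac{2n}{n+4s}\in[1,2)$, but $\tfrac{2n}{n+4s}>1$ holds only when $n>4s$: at $n=4s$ one gets $p=1$, where HLS fails at the endpoint, and for $2s<n<4s$ one gets $p<1$, where HLS is simply not applicable. This is exactly the regime the paper also has to treat separately (the ``$4s\ge n$'' branch in the proof of Lemma~\ref{lef.3}, which invokes the pointwise bound of Lemma~\ref{lemma-3.2}). Your argument can be repaired without changing its structure: for $n\le 4s$ choose $p\in(1,\tfrac{n}{2s})$ arbitrary (nonempty since $n>2s$, and then $\|e^{u^\lambda}\|_{L^p(B_{8r})}$ is still uniformly bounded); HLS lands the Riesz potential in $L^q(B_{4r})$ with $\tfrac1q=\tfrac1p-\tfrac{2s}{n}<1-\tfrac{2s}{n}\le\tfrac12$, i.e.\ $q>2$, and then $L^q(B_{4r})\hookrightarrow L^2(B_{4r})$ gives what you need. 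The boundary-term exponent $\tfrac{2n}{n+2s}$, by contrast, is fine for every $n>2s$. One further point worth making explicit: inserting $\Phi=\eta^2 w^\lambda$ into \eqref{1.weak} requires a mollification/density step, and the absorption of the cross term needs $\int_{B^{n+1}_{2r}\cap\R}t^{1-2s}|\nabla w^\lambda|^2\,dxdt<\infty$ for each fixed $\lambda$ a priori; this is guaranteed by $u^\lambda\in\dot H^s_{\mathrm{loc}}(\B)$, which the standing hypothesis $u\in W^{1,2}_{\mathrm{loc}}(\B)$ supplies.
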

	\begin{proof}
		We write
		$$u^\l=u_1^\l+u_2^\l,$$ where
		\begin{align*}
		u^\l_1(x)&=c(n,s)\int_{\B}\left(\frac{1}{|x-y|^{n-2s}}-\frac{1}{(1+|y|)^{n-2s}}\right)\varphi(y)e^{u^\l(y)}dy+c_\l,\\
		u^\l_2(x)&=c(n,s)\int_{\B}\left(\frac{1}{|x-y|^{n-2s}}-\frac{1}{(1+|y|)^{n-2s}}\right)(1-\varphi(y))e^{u^\l(y)}dy.\\
		\end{align*}
		Here $\varphi\in C_c^\infty(B_{4r})$  is such that $\varphi=1$ in $B_{2r}$. As in the proof of Lemma \ref{lef.3} one can show that    $$\int_{\R}t^{1-2s}|\nabla\ou^\l_1(x)|^2dxdt=\kappa_s\int_{\B}\left|\ss u^\l_1(x)\right|^2dx\leq C(r).$$
		Here and in the following  $\ou_i^\l$ denotes the $s$-harmonic extension of $u_i^\l,~i=1,2$ respectively. It remains to prove that
		\begin{equation}
		\label{f.26}
		\int_{B_r^{n+1}\cap\R}t^{1-2s}|\nabla\ou^\l_2(x)|^2dxdt\leq C(r)\quad \mbox{for every}~r\geq 1,~\lambda\geq1.
		\end{equation}
		Following the  arguments of Lemma \ref{lef.1}, one could verify that
		\begin{equation}
		\label{f.27}
		\|\nabla u_2^\l\|_{L^\infty(B_{3r/2})}\leq C(r),\quad
		\int_{\B}\frac{|u_{2}^\l(x)|}{1+|x|^{n+2s}}dx\leq C(r),
		\end{equation}
                 and consequently,  
		\begin{equation}
		\label{f.28}
		\|u_2^\l\|_{L^\infty(B_{3r/2})}\leq C(r).
		\end{equation}
		To prove \eqref{f.26}, we shall consider $\partial_t\ou^\l_2$ and $\nabla_x\ou^\l_2$ seperately. For the first term we notice that
		\begin{align*}
		\partial_t\ou^\l_2(X)=~&\partial_t(\ou^\l_2(x,t)-u^\l_2(x))=d_{n,s}\partial_t\int_{\B}\frac{t^{2s}}{|(x-y,t)|^{n+2s}}(u_2^\l(y)-u_2^\l(x))dy\\
		=~&d_{n,s}\int_{\B}\partial_t\left(\frac{t^{2s}}{|(x-y,t)|^{n+2s}}\right)(u_2^\l(y)-u_2^\l(x))dy,
		\end{align*}
		where we used
		$$d_{n,s}\int_{\B}\frac{t^{2s}}{|(x-y,t)|^{n+2s}}dy=1.$$
		By \eqref{f.27},  for $|x|\leq r$ it holds that
		\begin{equation}  
		\label{f.29}
		\begin{aligned}
		&\left|\int_{\B\setminus B_ {3r/2}}\partial_t\left(\frac{t^{2s}}{|(x-y,t)|^{n+2s}}\right) (u_2^\l(y)-u_2^\l(x)) dy\right|\\
		&\leq C(r)t^{2s-1}\int_{\B\setminus B_ {3r/2}}\frac{(|u_2^\l(y)|+1)}{1+|y|^{n+2s}}dy\leq C(r)t^{2s-1}.
		\end{aligned}
		\end{equation}
		Using \eqref{f.27}-\eqref{f.28}, we see that
		\begin{equation}
		\label{f.30}
		\begin{aligned}
		&\int_{B_r^{n+1}\cap\R}t^{1-2s}\left(\int_{B_{3r/2}}\partial_t\left(\frac{t^{2s}}{|(x-y,t)|^{n+2s}}\right)(u_2^\l(y)-u_2^\l(x))dy\right)^2dxdt\\
		&\leq C(r)\|\nabla u^\l\|_{L^\infty(B_{3r/2})}^2\int_{B_r^{n+1}\cap\R}t^{1-2s}
		\left(\int_{B_{3r/2}}\partial_t\left(\frac{t^{2s}}{|(x-y,t)|^{n+2s}}\right)|x-y|dy\right)^2dxdt\\
		&\leq C(r)\|\nabla u^\l\|_{L^\infty(B_{3r/2})}^2\int_{B_r^{n+1}\cap\R}\left(t^{1-2s}+t^{2s-1}\right)dxdt\leq C(r).
		\end{aligned}
		\end{equation}
		By \eqref{f.29}-\eqref{f.30} we get
		\begin{equation}
		\label{f.31}
		\begin{aligned}
		\int_{B_r^{n+1}}t^{1-2s}|\partial_t\ou^\l_2|^2dxdt
		\leq C(r)\int_{B_r^{n+1}}\left(t^{1-2s}+t^{2s-1}\right)dxdt\leq C(r).
		\end{aligned}
		\end{equation}
	    For the term $\nabla_x\ou_2^\l$,  in a similar way   we get
		\begin{equation}
		\label{f.35}
		\int_{B_r^{n+1}\cap\R}t^{1-2s}|\nabla_x\ou^\l_2|^2dxdt\leq C(r).
		\end{equation}
		Then \eqref{f.24} follows from \eqref{f.31} and \eqref{f.35}. Thus the lemma is proved.
	\end{proof}
	\medskip

	\begin{proposition}
		\label{prf.1}
We have $c_\lambda=O(1)$ for $\lambda\in [1,\infty)$. Moreover,  
		\begin{equation*}
		\lim_{\l\to+\infty}E(\ou,0,\l)=\lim_{\l\to\infty}E(\ou^\l,0,1)<+\infty.
		\end{equation*}
	\end{proposition}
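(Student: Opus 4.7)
The plan is to compute $E(\ou^\lambda,0,1)$ term by term using the lemmas already at our disposal, recognise that it is essentially equal to $2sc_sc_\lambda$ up to a bounded error, and then bound $c_\lambda$ from both sides. The scaling identity $E(\ou,0,\lambda)=E(\ou^\lambda,0,1)$ follows at once from the change of variables $X\mapsto\lambda X$ together with $\ou^\lambda(X)=\ou(\lambda X)+2s\log\lambda$, so the whole question reduces to bounding $E(\ou^\lambda,0,1)$ uniformly in $\lambda\geq 1$. Since $\log r\equiv 0$ on $\partial B_1^{n+1}$, the boundary term in \eqref{1.monotonicity} simplifies to $2s\int_{\partial B_1^{n+1}\cap\R}t^{1-2s}\ou^\lambda\,d\sigma$, which by Lemma \ref{lef.2} equals $2sc_sc_\lambda+O(1)$. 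The Dirichlet term is bounded by $C$ thanks to Lemma \ref{lef.4} with $r=1$, and the nonlinearity term is bounded by Corollary \ref{cor2.2} with $r=1$. Collecting these,
\begin{equation*}
E(\ou^\lambda,0,1)=2sc_sc_\lambda+O(1),\qquad \lambda\geq 1.
\end{equation*}

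The lower bound $c_\lambda\geq-C$ now drops out of the monotonicity of $E$: for $\lambda\geq 1$ one has $E(\ou,0,\lambda)\geq E(\ou,0,1)$, and the right-hand side is a fixed finite number by Lemma \ref{lef.4} and Corollary \ref{cor2.2}. Under the weaker regularity assumption mentioned in Remark \ref{re1.1} the same conclusion follows from the almost-monotonicity relation \eqref{4.10}, since the error term is $o(\lambda^{-1-\delta})$ and therefore integrable on $[1,\infty)$.

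For the upper bound on $c_\lambda$ the key idea is to exploit the pointwise identity $u^\lambda-v^\lambda\equiv c_\lambda$ from Lemma \ref{u-representation} together with Jensen's inequality. Fix once and for all a nonnegative function $\vp\in C_c^\infty(B_1)$ with $\int_{B_1}\vp\,dx=1$. Then
\begin{equation*}
c_\lambda=\int_{B_1}\vp\,u^\lambda\,dx-\int_{B_1}\vp\,v^\lambda\,dx.
\end{equation*}
Jensen's inequality applied to the probability measure $\vp\,dx$ gives $\int_{B_1}\vp\,u^\lambda\,dx\leq\log\int_{B_1}\vp\,e^{u^\lambda}\,dx\leq\log\bigl(\|\vp\|_\infty\int_{B_1}e^{u^\lambda}\bigr)\leq C$ by Corollary \ref{cor2.2}. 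For the second term, formula \eqref{f.5} with $R=1$ yields $v^\lambda(x)=c(n,s)\int_{B_2}|x-y|^{2s-n}e^{u^\lambda(y)}\,dy+O(1)$ for $|x|\leq 1$, and Fubini together with the uniform estimate $\int_{B_1}|x-y|^{2s-n}\vp(x)\,dx\leq C$ (valid for all $y\in\B$ since $n-2s<n$) gives $|\int_{B_1}\vp\,v^\lambda|\leq C\int_{B_2}e^{u^\lambda}+C\leq C$ by Corollary \ref{cor2.2} once more. Consequently $c_\lambda\leq C$.

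Combining, $c_\lambda=O(1)$ for $\lambda\geq 1$, so $E(\ou,0,\lambda)=E(\ou^\lambda,0,1)$ is bounded uniformly in $\lambda$. Monotonicity then forces the limit $\lim_{\lambda\to\infty}E(\ou,0,\lambda)$ to exist and to be finite. The main subtlety is the upper bound on $c_\lambda$: monotonicity alone is a one-sided tool and only yields a lower bound, so the Jensen trick combined with the representation of Lemma \ref{u-representation} is essential and is the only place where a genuinely new ingredient enters.
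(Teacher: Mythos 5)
Your proof is correct and follows essentially the same route as the paper: the scaling identity together with Lemma \ref{lef.2}, Lemma \ref{lef.4} and Corollary \ref{cor2.2} express $E(\ou^\lambda,0,1)$ as $2sc_sc_\lambda+O(1)$, monotonicity gives the lower bound on $c_\lambda$, and the representation formula \eqref{f.5} combined with the uniform bound on $\int_{B_1}e^{u^\lambda}$ gives the upper bound. The only cosmetic difference is in the last step: the paper simply observes that \eqref{f.5} gives $u^\lambda=v^\lambda+c_\lambda\geq c_\lambda-C$ on $B_1$, whence $|B_1|e^{c_\lambda-C}\leq\int_{B_1}e^{u^\lambda}\leq C$, which bypasses the averaging against $\vp$ and the Jensen/Fubini manipulations; your variant is correct but does a bit more work than needed.
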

	\begin{proof}By \eqref{f.1} and Lemma \ref{lef.4}, we have  that
	\begin{equation*}
	\frac12\int_{ B^{n+1}_1\cap\R}t^{1-2s}|\nabla\ou^\l|^2dxdt-\kappa_s\int_{B^{n+1}_1\cap\partial\mathbb{R}_+^{n+1}}e^{\ou^\l}dx\quad\mbox{is bounded in}~\l\in[1,\infty).
	\end{equation*}
	Using Theorem \ref{th4.2} and Lemma \ref{lef.2} we get
	\begin{equation*}
	E(\ou^\l,0,1)=c_sc_\l+O(1)\geq E(\ou,0,1)=c_sc_1+O(1),
	\end{equation*}
	which implies that
	\begin{equation}
	\label{f.1.2}
	c_\l ~\ \mbox{is bounded from below for}~\ \l\geq1.
	\end{equation}
	Here we notice that the  uniform lower bound on $c_\l$ can also be obtained using  the truncated energy functional as defined in Remark \ref{re1.1}, thanks to \eqref{4.10}. 
	
	By \eqref{f.5} we have $$u^\l=v^\l+c_\l\geq c_\l-C~\ \text{on}~\  B_1~\ \text{for}~\ \l\geq 1.$$ Hence, $$c_\l\leq C\quad\text{for }\l\geq 1,$$ thanks to \eqref{f.1}.  
	Thus we obtain  that $c_\l$ is bounded. Therefore, by Lemma \ref{lef.2}  
	\begin{equation}
	\label{f.1.5}
	\int_{\partial B^{n+1}_1\cap\R}t^{1-2s}\ou^\l(X)d\sigma=O(1).
	\end{equation}
We conclude the proof.  
 \end{proof}
	
\begin{lemma}\label{lem-H1bound} For every $r>0$ and $\lambda\geq 1$ we have $$\int_{B_r^{n+1}\cap\R}t^{1-2s}\left(|\ou^\l|^2+|\nabla \ou^\l|^2\right)dxdt\leq C(r).$$  \end{lemma}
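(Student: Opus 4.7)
\medskip

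\textbf{Proof proposal.}

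The gradient part of the estimate is already supplied by Lemma \ref{lef.4}, so the plan reduces to controlling the weighted $L^2$ norm of $\ou^\l$ itself: $\int_{B_r^{n+1}\cap\R}t^{1-2s}|\ou^\l|^2\,dxdt\leq C(r)$. By inclusion $B_r^{n+1}\cap\R\subset B_R^{n+1}\cap\R$, I may assume $r=R\geq 1$, and set $\Omega_R:=B_R^{n+1}\cap\R$.

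The strategy is a weighted Poincar\'e--Wirtinger argument relative to a fixed sub-domain. Since $t^{1-2s}$ is a Muckenhoupt $A_2$ weight on $\mathbb{R}^{n+1}$ for $s\in(0,1)$, the standard weighted Poincar\'e--Wirtinger inequality on the Lipschitz domain $\Omega_R$, with mean subtracted over the fixed subset $\Omega_1\subset\Omega_R$, gives
\begin{equation*}
\int_{\Omega_R}t^{1-2s}|\ou^\l-A_\l|^2\,dxdt\;\leq\;C(R)\int_{\Omega_R}t^{1-2s}|\nabla \ou^\l|^2\,dxdt\;\leq\;C(R),
\end{equation*}
where $A_\l:=\bigl(\int_{\Omega_1}t^{1-2s}\,dxdt\bigr)^{-1}\!\int_{\Omega_1}t^{1-2s}\ou^\l\,dxdt$ and the last inequality is Lemma \ref{lef.4}. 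Combined with $\int_{\Omega_R}t^{1-2s}\,dxdt\leq C(R)$, the conclusion follows as soon as $A_\l=O(1)$ uniformly in $\l\geq 1$.

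To bound $A_\l$, I would use the decomposition $\ou^\l=c_\l+\overline{v^\l}$, where $\overline{v^\l}$ is the $s$-harmonic extension of $v^\l$. By Proposition \ref{prf.1} the constant $c_\l$ is uniformly bounded, so it remains to estimate $\int_{\Omega_1}t^{1-2s}\overline{v^\l}(x,t)\,dxdt$. By Fubini and the Poisson representation \eqref{Poisson-repre},
\begin{equation*}
\int_{\Omega_1}t^{1-2s}\overline{v^\l}(x,t)\,dxdt=\int_{\B}v^\l(y)\,K(y)\,dy,\qquad K(y):=\int_{\Omega_1}t^{1-2s}P((x,t),y)\,dxdt.
\end{equation*}
A direct calculation, analogous to the ones carried out in the proof of Lemma \ref{lef.2}, yields the pointwise decay $|K(y)|\leq C/(1+|y|^{n+2s})$; then the bound \eqref{f.4} of Lemma \ref{lef.1} gives $|A_\l|\leq C$ uniformly in $\l\geq 1$, completing the proof.

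The main (and only) technical point I anticipate is justifying the weighted Poincar\'e--Wirtinger inequality on $\Omega_R$ with the $A_2$ weight $t^{1-2s}$; this is standard but must be invoked with the correct constant dependence on $R$. The Fubini step and the kernel decay for $K$ are routine given the computations already performed in Lemma \ref{lef.2}.
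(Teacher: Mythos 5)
Your proof is correct, but it takes a genuinely different route from the paper's. You reduce the $L^2$ bound on $\ou^\lambda$ to a bound on a fixed weighted mean $A_\lambda$ via a Muckenhoupt--$A_2$ Poincar\'e--Wirtinger inequality on $B_R^{n+1}\cap\R$, then control $A_\lambda$ through the decomposition $\ou^\lambda=c_\lambda+\overline{v^\lambda}$, the uniform boundedness of $c_\lambda$ (Proposition \ref{prf.1}), Fubini, and the pointwise decay of the kernel $K(y)=\int_{\Omega_1}t^{1-2s}P(X,y)\,dxdt$ combined with \eqref{f.4}. The paper instead argues pointwise and more elementarily: for the positive part it uses $(w^+)^2\le 2e^w$ and Jensen's inequality to get $|(\ou^\lambda)^+(X)|^2\le C\int_{\B}e^{u^\lambda(y)}P(X,y)\,dy$, which integrates to $C(r)$ by \eqref{f.1} and Lemma \ref{leh.3}; for the negative part it uses $c_\lambda=O(1)$ together with the lower bound $v^\lambda(x)\ge -C\log|x|$ from \eqref{f.10} to conclude $\ou^\lambda\ge -C(r)$ on $B_r^{n+1}\cap\R$. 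Both routes invoke Proposition \ref{prf.1} and Lemma \ref{lef.4}, so neither is circular. Your approach is more abstract and buys a certain robustness (it would carry over to any situation where a gradient bound and a bound on one weighted average are available), at the cost of needing the weighted Poincar\'e inequality on a Lipschitz domain (including the passage from the full-domain mean to the mean over $\Omega_1$, which you should spell out). The paper's argument is shorter and exploits the exponential nonlinearity directly, avoiding that machinery.
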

\begin{proof} Based on Lemma \ref{lef.4}, we only need to show that $$\int_{B_r^{n+1}\cap\R}t^{1-2s}|\ou^\l|^2dxdt\leq C(r).$$ 
Together with $(w^+)^2\leq 2e^w$ and Jensen's inequality we get 
\begin{align*} \int_{B_r^{n+1}\cap\R}t^{1-2s}|(\ou^\l)^+|^2dxdt\leq C \int_{B_r^{n+1}\cap\R}t^{1-2s} \int_{\B}e^{u^\l(y)}P(X,y)dydxdt\leq C,
\end{align*}
where the last inequality follows from \eqref{f.1} and Lemma \ref{leh.3}.  As $c_\l=O(1)$, we see that $$u^\l(x)\geq -C\log(2+|x|)\quad\text{on}\quad \B,$$ thanks to \eqref{f.10}. In particular, $\ou^\l (X)\geq -C(r)$ for $|X|\leq r$. Then the lemma follows immediately. 
\end{proof}

	\medskip
	\subsection{Proof of Theorem \ref{th1.1}} \label{section5}
	In this subsection we provide the proof of Theorem \ref{th1.1}.
	
	\begin{proof}[Proof of Theorem \ref{th1.1}.]
	Let  $u$  be  finite Morse index solution to \eqref{fg-1} for some  $n>2s$ satisfying  \eqref{1.stable1}. Let $R>1$ be such  that $u$   is stable outside the ball  $B_R$. 
	\medskip
		
		From Lemma \ref{lem-H1bound} we obtain that there exists a sequence $\lambda_i\to+\infty$ such that $\ou^{\lambda_i}$ converges weakly in $\dot{H}^1_{\mathrm{loc}}(\overline\R,t^{1-2s}dxdt)$ to a function $\ou^\infty$. In addition, we have $\ou^{\l_i}\to \ou^\infty$ almost everywhere. To show that $\ou^\infty$ satisfies \eqref{weak-fg}, we need to verify two things. First, we need to show that for any $\ve$ there exists $r\gg1$ such that
		\begin{equation}
		\label{5.1}
		\int_{\B\setminus B_r}\frac{|u^\l|}{1+|y|^{n+2s}}dy<\ve,\quad\text{for every }\lambda\geq 1.
		\end{equation}
		Indeed,  as in the proof of  \eqref{f.111}, we get
		\begin{equation*}
		\begin{aligned}
		\int_{\B\setminus B_r}\dfrac{|u^\l(x)|}{1+|x|^{n+2s}}dx\leq~&
		C\int_{\B\setminus B_r}\dfrac{c_\lambda}{1+|x|^{n+2s}}dx
		+C\int_{\mathbb{R}^n\setminus B_{r_0}}e^{u^\l(y)}|E(y)|dy\\
		&+C\int_{\B\setminus B_r}\int_{B_{r_0}}
		\frac{e^{u^\l(y)}}{|x-y|^{n-2s}(1+|x|^{n+2s})}dydx\\
		&+C\int_{\B\setminus B_r}\int_{B_{r_0}}
		\frac{e^{u^\l(y)}}{((1+|y|)^{n-2s})(1+|x|^{n+2s})}dydx\\
		\leq ~& Cr^{-2s}+Cr_0^{-\gamma/2}+Cr^{-2s}r_0^{n-2s}.
		\end{aligned}
		\end{equation*}
		We could first choose $r_0$ large enough such that $Cr_0^{-\gamma/2}\leq\ve/2$ and then choose $r$ such that $Cr^{-2s}r_0^{n-2s}+Cr^{-2s}\leq\ve/2$. Thus, \eqref{5.1} is proved. As a consequence, we could show that $u^\infty\in L_s(\B)$, and for any $\vp\in C_c^\infty(\B)$
		\begin{equation}
		\label{5.2}
		\lim\limits_{i\to\infty}\int_{\B}u^{\lambda_i}\s\vp dx=\int_{\B}u^{\infty}\s\vp dx.
		\end{equation}
		The second point we need to prove is that $e^{u^{\lambda_i}}$ converge to $e^{u^{\infty}}$ in $L_{\mathrm{loc}}^1(\B).$ By \eqref{est-p-2} we can easily see that $e^{u^{\lambda_i}}$ is uniformly integrable in $L_{\mathrm{loc}}^1(\B\setminus\{0\})$. Using  \eqref{est-p-3}, around the origin we get  
		\begin{equation*}
		\int_{B_{\ve}}e^{u^{\lambda_i}}dx=\lambda_i^{2s-n}\int_{B_{\l_i\ve}}e^{u}dx\leq C\ve^{n-2s}.
		\end{equation*}
		Therefore, we have $(e^{u^{\lambda_i}})$ is uniformly integrable in $L_{\mathrm{loc}}^1(\B)$,  and together with $u^{\lambda_i} \to u^\infty$ a.e., we get for any $\vp\in C_c^\infty(\B)$
		\begin{equation}
		\label{5.3}
		\lim\limits_{i\to\infty}\int_{\B}e^{u^{\lambda_i}}\vp dx=\int_{\B}e^{u^{\infty}}\vp dx.
		\end{equation}
		Then $u^\infty$ satisfies equation \eqref{weak-fg} follows from \eqref{5.2} and \eqref{5.3}.
		
		
		Now  we show that the limit function $\ou^\infty$ is homogenous, and is of the form $-2s\log r+\tau(\theta)$.  Based on the above convergences, we get for any $r>0$, 
		\begin{equation}
		\label{5.r}
		{\lim_{i\to\infty}E(\ou,0,\l_i r)~\ \mbox{is independent of}~\ r}.
		\end{equation}
	    Indeed, for any two positive numbers $r_1 <r_2$ we have
	    \begin{equation*}
	    \lim_{i\to\infty}E(\ou,0,\l_i r_1)\leq\lim_{i\to\infty}E(\ou,0,\l_i r_2).
        \end{equation*}
        On the other hand, for any $\lambda_i$, we can choose $\lambda_{m_i}$ such that $\{\lambda_{m_i}\}\subset\{\lambda_i\}$ and $\lambda_i r_2\leq \lambda_{m_i} r_1$. As a consequence, we have
        \begin{equation*}
        \lim_{i\to\infty}E(\ou,0,\l_i r_2)\leq\lim_{i\to\infty}E(\ou,0,\l_{m_i} r_1)=\lim_{i\to\infty}E(\ou,0,\l_{i} r_1).
        \end{equation*}
This finishes the proof of \eqref{5.r}. Using \eqref{5.r} we see that for $R_2>R_1>0,$
		\begin{equation*}
		\begin{aligned}
		0=~&\lim_{i\to\infty}E(\ou,0,\l_i R_2)-\lim_{i\to+\infty}E(\ou,0,\l_i R_1)\\
		=~&\lim_{i\to\infty}E(\ou^{\l_i},0, R_2)-\lim_{i\to\infty}E(\ou^{\l_i},0,R_1)\\
		\geq~&\lim_{i\to\infty}\inf\int_{\left(B_{R_2}^{n+1}\setminus B_{R_1}^{n+1}\right)\cap\R}t^{1-2s}r^{2s-n}\left(\frac{\partial\ou^{\l_i}}{\partial r}+\frac{2s}{r}\right)^2dxdt\\
		\geq~&\int_{\left(B_{R_2}^{n+1}\setminus B_{R_1}^{n+1}\right)\cap\R}t^{1-2s}r^{2s-n}\left(\frac{\partial\ou^\infty}{\partial r}+\frac{2s}{r}\right)^2dxdt.
		\end{aligned}
		\end{equation*}
		Notice that in the last inequality we only used the weak convergence of $\ou^{\l_i}$
		to $\ou^\infty$ in $H^1_{\mathrm{loc}}(\overline\R,t^{1-2s}dxdt)$. So,
		$$\frac{\partial\ou^\infty}{\partial r}+\frac{2s}{r}=0\quad \mbox{a.e. in}\quad \R.$$
		Thus we proved the claim. In addition, $\ou^\infty$ is also stable because the stability condition for $\ou^{\l_i}$ passes to the limit. Then by Theorem \ref{th3.1} we get that \eqref{1.stable} holds, a contradiction to \eqref{1.stable1}. This proves  Theorem \ref{th1.1}.
	\end{proof}	
	\noindent {\bf Remark 4.2.} The above arguments also work if one uses the  truncated energy functional $E_R$ as mentioned in Remark \ref{re1.1}.

	\vspace{1.5cm}
	
\end{document}